\definecolor{Myblue}{rgb}{0,0,0.6}  
\newcommand{\boxpic}[3]{
	\begin{tikzpicture}[baseline={([yshift=-.5ex]current bounding box.center)}]
	\node at (0,0) {\begin{overpic}[scale=#1]{#2}#3\end{overpic}};
	\end{tikzpicture}
}
\theoremstyle{definition}
\newtheorem{definition}{Definition}
\newtheorem{theorem}[definition]{Theorem}
\newtheorem{lemma}[definition]{Lemma}
\newtheorem{proposition}[definition]{Proposition}
\newtheorem{example}[definition]{Example}
\newtheorem{corollary}[definition]{Corollary}
\newtheorem{remark}[definition]{Remark}
\numberwithin{definition}{section}
\numberwithin{equation}{section}
\numberwithin{figure}{section}
\newcommand{\id}[1]{\operatorname{id}_{#1}}
\newcommand{\unit}{\mathbbm{1}} 
\newcommand{\catname}[1]{{\mathcal{#1}}} 
\newcommand{\Hom}{\mathrm{Hom}} 
\newcommand{\End}{\mathrm{End}}
\newcommand{\Aut}{\mathrm{Aut}}
\newcommand{\Mod}{\mathrm{Mod}}
\newcommand{\doublecat}{\catname C ^\pm}
\newcommand{\ibar}{\overline{i}}
\newcommand{\jbar}{\overline{j}}
\newcommand{\kbar}{\overline{k}}
\newcommand{\ie}{i.e.\,}
\begin{document}

\title{Mapping class group representations and Morita classes of algebras}

\author{
	Iordanis Romaidis \qquad
	Ingo Runkel\\[0.5cm]
	\normalsize{\texttt{\href{mailto:iordanis.romaidis@uni-hamburg.de}{iordanis.romaidis@uni-hamburg.de}}} \\  
	\normalsize{\texttt{\href{mailto:ingo.runkel@uni-hamburg.de}{ingo.runkel@uni-hamburg.de}}}\\[0.5cm]
	\normalsize\slshape Fachbereich Mathematik, Universit\"{a}t Hamburg,\\
	\normalsize\slshape Bundesstra{\ss}e 55, 20146 Hamburg, Germany
	}

\date{}

\maketitle

\begin{abstract}
A modular fusion category $\mathcal{C}$ allows one to define projective representations of the mapping class groups of closed surfaces of any genus. We show that if all these representations are irreducible, then $\mathcal{C}$ has a unique Morita-class of simple non-degenerate algebras, namely that of the tensor unit. This improves on a result by Andersen and Fjelstad, albeit under stronger assumptions. One motivation to look at this problem comes from questions in three-dimensional quantum gravity.
\end{abstract}

\newpage

\setcounter{tocdepth}{1}
\tableofcontents

\section{Introduction}

Let $\catname C$ be a modular fusion category, that is, a finitely semisimple ribbon category with simple tensor unit whose braiding is non-degenerate. Famously, such categories give rise to three-dimensional topological quantum field theories \cite{RT2,Tur}, and consequently also to (projective) representations of surface mapping class groups. 

In more detail, let $I$ denote a choice of representatives of the isomorphism classes of simple objects in $\catname C$ and write $L = \bigoplus_{i \in I} i \otimes i^*$.
Denote by $\Mod_g$ the mapping class group of a closed genus-$g$ surface without marked points. Then $\Mod_g$ acts projectively on the Hom-space
$$
V_g^{\catname C} :=
    \catname C(\unit,L^{\otimes g})~,
$$
and we recall this action in Section~\ref{sec:MCG-action}.

An algebra $A \in \catname C$ is called \textit{non-degenerate} if its trace pairing is non-degenerate, see Section~\ref{sec:invariants} for details. Non-degenerate algebras carry a symmetric Frobenius structure. An algebra is called \textit{simple} if it is simple as a bimodule over itself. 
Two algebras $A,B$ are \textit{Morita-equivalent} if there are bimodules 
${}_AX_B$ and ${}_BY_A$ in $\catname C$ such that $X \otimes_B Y \cong A$ and $Y \otimes_A X \cong B$ as bimodules. 

Our main result is (see Theorem~\ref{thm:maintheorem}):

\begin{theorem}\label{thm:main-intro}
Let $\mathcal{C}$ be a modular fusion category over an algebraically closed field of characteristic zero. If the projective mapping class group representations $V_g^{\catname C}$ are irreducible for all $g \ge 0$, then every simple non-degenerate algebra in $\mathcal{C}$ is Morita-equivalent to the tensor unit.
\end{theorem}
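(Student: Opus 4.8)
The plan is to attach to every simple non-degenerate algebra $A$ a compatible family of $\Mod_g$-intertwiners of the projective representations $V_g^{\catname C}$, and then to use the irreducibility hypothesis to force this family to be the one of the tensor unit. Concretely: fix a pants decomposition of $\Sigma_g$, let $\Gamma\subset\Sigma_g$ be its dual trivalent graph, push $\Gamma$ slightly into the interior of the cylinder $\Sigma_g\times[0,1]$, colour its edges by $A$ and its vertices by the (co)multiplication of the symmetric Frobenius structure on $A$ (normalised to be special), and apply the Reshetikhin--Turaev TQFT of $\catname C$. Regarding $\Sigma_g\times[0,1]$ as a cobordism from $\Sigma_g$ to itself, this produces an endomorphism $C_g(A)\in\End\bigl(V_g^{\catname C}\bigr)$ --- the partition function of the full CFT associated to $A$ on $\Sigma_g$, in the guise of the TFT construction of correlators. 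Since $A$ is non-degenerate (so it may be taken special), the local moves relating pants decompositions are respected by the $A$-colouring, so $C_g(A)$ is independent of the decomposition; as any two pants decompositions of $\Sigma_g$ differ by a mapping class, conjugation by a lift of any $f\in\Mod_g$ fixes $C_g(A)$, \ie $C_g(A)$ commutes with the projective $\Mod_g$-action. One further checks that the $C_g(A)$ are compatible with the sewing maps relating different genera, that they depend only on the Morita class of $A$, and that $C_g(\unit)=\id{V_g^{\catname C}}$.

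Now invoke the hypothesis. Since $V_g^{\catname C}$ is irreducible, Schur's Lemma gives $C_g(A)=\lambda_g\,\id{V_g^{\catname C}}$ for some $\lambda_g\in k^{\times}$. The scalars are harmless: the projective nature of the representations means it is enough to match the $C_g(A)$ up to scalars, and they can in any case be pinned down --- $C_0(A)\in\End(V_0^{\catname C})=k$ is the sphere partition function $1$, the vacuum coefficient of the torus invariant $C_1(A)$ equals $\dim_k\End_{A|A}(A)=1$ because $A$ is simple, so $\lambda_1=1$, and the sewing relations then propagate this to $\lambda_g=1$ for all $g$. Hence $C_g(A)=C_g(\unit)$ for every $g\ge0$.

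What remains --- and what I expect to be the real obstacle --- is to deduce that $A$ is Morita-equivalent to $\unit$ from the fact that all these closed-surface invariants agree with those of $\unit$. One route runs through the full centre $Z(A)$, a connected étale (hence Lagrangian) commutative algebra in $\catname C\boxtimes\overline{\catname C}$: its underlying object is read off from $C_1(A)=\id$, forcing $Z(A)\cong\bigoplus_{i\in I}i\boxtimes i^{*}$, the underlying object of the canonical Lagrangian algebra $Z(\unit)$; the higher-genus invariants $C_g(A)=\id$ must then pin down the multiplication of $Z(A)$ to the canonical one, so that $Z(A)\cong Z(\unit)$ as algebras, and since the full centre is injective on Morita classes this gives $A\simeq\unit$. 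The genus-one datum alone does not suffice: a braided auto-equivalence of $\catname C$ that fixes every isomorphism class of simple objects can alter the algebra structure on $\bigoplus_i i\boxtimes i^{*}$ while keeping the torus partition function diagonal, so there may be non-trivial Morita classes with diagonal torus partition function; excluding them is precisely the role of the higher-genus representations, which is why irreducibility is demanded for all $g\ge0$. Equivalently, the last step is the uniqueness half of the classification of solutions to the sewing constraints: a modular functor together with a consistent system of closed-surface correlators determines the underlying Morita class of simple non-degenerate algebras.
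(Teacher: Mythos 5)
Your setup is essentially the paper's own: the endomorphism $C_g(A)\in\End(V_g^{\catname C})$ you build from an $A$-coloured trivalent graph in $\Sigma_g\times[0,1]$ is, under the equivariant isomorphism $V^{\doublecat}(\Sigma_g)\cong V^{\catname C}(\Sigma_g)\otimes V^{\catname C}(\Sigma_g)^\ast$, exactly the modular-invariant vector $C(Z(A))_g$ built from the full centre; irreducibility plus Schur then forces $C_g(A)=\lambda_g\,C_g(\unit)$, and the route back to Morita equivalence via $Z(A)\cong Z(\unit)$ \emph{as algebras} and the injectivity of the full centre on Morita classes is also the paper's. But the step you flag as ``the real obstacle'' and leave at the level of ``the higher-genus invariants must then pin down the multiplication'' is precisely the content of the proof, and it is not automatic. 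The difficulty is that the vectors $C_g$ only see the structure constants $\lambda^{ij}_k$ of $Z(A)$ (relative to $Z(\unit)$) through products over fusion trees with incoming label $\unit$ and outgoing labels paired as $i,\ibar$; equality of all these products with $1$ constrains the $\lambda^{ij}_k$ only up to the gauge freedom of rescaling the embeddings $i^\ast\times i\hookrightarrow Z(A)$ by constants $f_i$. One must show that this residual freedom can actually be used to set all $\lambda^{ij}_k=1$, and that is where the paper has to work: genus $2$ is needed just to show the constants are nonzero, diagonal and independent of the fusion multiplicity index; genus up to $3N$ (with $N$ the filtration length of the adjoint subring) is used to gauge away the constants on the adjoint component and on products with $I_{ad}$; and after that the surviving constants descend to a function $\omega(g,h)=\lambda^{ij}_k$ on the universal grading group $G$, which coassociativity makes a $2$-cocycle and commutativity makes symmetric. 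The final, indispensable input is that $\mathrm{Ext}(G,\mathbbm{k}^\times)=0$ over an algebraically closed field of characteristic zero, so every symmetric $2$-cocycle is a coboundary and the last gauge transformation $f_i=\gamma_g^{-1}$ kills $\omega$. Without this cohomological argument the invariants $C_g$ alone do not visibly rule out a nontrivial symmetric class, so your proposal as written does not close.

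Two smaller points. First, your normalisation argument for $\lambda_g$ is not needed in this form: once one compares the two invariants component by component on a fixed fusion tree, the all-$\unit$ component fixes $\lambda_g=1$ directly, and in any case the argument only uses ratios of components, so the overall scalar is harmless. Second, your remark that a braided autoequivalence fixing all simple objects can deform the algebra structure on $\bigoplus_i i^\ast\times i$ while keeping the torus partition function diagonal is accurate (such examples exist, cf.\ the Lagrangian algebras of Davydov used in the paper's discussion), and it correctly explains why genus one cannot suffice --- but identifying the obstruction is not the same as showing that the higher-genus data removes it.
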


Suppose now that the modular fusion category $\catname C$ is defined over $\mathbb{C}$. In this case, $\catname C$ is called \textit{pseudo-unitary} if  all simple objects have positive quantum dimension, see \cite[Sec.\,8]{ENO} for details. Combining Theorem~\ref{thm:main-intro} with results on the existence of module traces in \cite{Schau}, it turns out we can drop the non-degeneracy condition (see Corollary~\ref{cor:main-pseudounitary}):

\begin{corollary}
Suppose that in addition to the hypotheses in Theorem~\ref{thm:main-intro}, $\catname C$ is defined over $\mathbb{C}$ and pseudo-unitary. Then all simple algebras in $\catname C$ are Morita-equivalent to the tensor unit.
\end{corollary}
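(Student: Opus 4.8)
The plan is to deduce the corollary from Theorem~\ref{thm:main-intro} by showing that pseudo-unitarity forces every simple algebra to be non-degenerate, so that no extra hypothesis is in fact needed. Let $A \in \catname C$ be a simple algebra and let $\catname C_A$ denote the category of right $A$-modules in $\catname C$; this is a semisimple left $\catname C$-module category, and it is indecomposable precisely because $A$ is simple. Recall that Morita-equivalence classes of simple algebras in a fusion category $\catname C$ are in bijection with equivalence classes of semisimple indecomposable $\catname C$-module categories via $A \mapsto \catname C_A$, and that for any generator $M$ of $\catname C_A$ the internal endomorphism algebra $\underline{\End}(M) \in \catname C$ is again simple, with $\catname C_{\underline{\End}(M)} \simeq \catname C_A$ as module categories. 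Since $\catname C_A$ is indecomposable, every one of its nonzero objects is a generator; in particular one may take $M = A$ viewed as a module over itself, for which $\underline{\End}(A) \cong A$ as an algebra.

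The input I would draw from \cite{Schau} is the following: as $\catname C$ is defined over $\mathbb C$ and pseudo-unitary — hence carries the canonical spherical structure of \cite[Sec.\,8]{ENO} — the module category $\catname C_A$ admits a \emph{module trace}. A module trace is by construction compatible with the pivotal structure of $\catname C$, and it equips each internal endomorphism algebra $\underline{\End}(M)$ with a symmetric Frobenius structure whose associated trace pairing is non-degenerate; a short computation identifies this pairing, up to a nonzero scalar that does not affect non-degeneracy, with the categorical trace pairing used in Section~\ref{sec:invariants}. Specializing to $M = A$ and transporting along $\underline{\End}(A) \cong A$, we conclude that $A$ is a non-degenerate algebra in the sense of that section.

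It now suffices to invoke Theorem~\ref{thm:main-intro}: its hypotheses hold, since $\mathbb C$ is algebraically closed of characteristic zero and the representations $V_g^{\catname C}$ are irreducible for all $g \ge 0$ by assumption. Hence the simple non-degenerate algebra $A$ is Morita-equivalent to $\unit$, and as $A$ was arbitrary this proves the corollary. The one step that requires genuine care — and the point where one leans on the cited literature rather than on standard Morita theory — is the middle paragraph: one must extract from \cite{Schau} both the existence of a module trace on $\catname C_A$ in the pseudo-unitary case and the identification of the Frobenius form it induces on $\underline{\End}(A) \cong A$ with the trace pairing that defines non-degeneracy. Everything else (the bijection with module categories, the fact that every nonzero object of an indecomposable semisimple module category is a generator, and the equivalence between the bimodule description of Morita equivalence used in the introduction and the module-category description used here) is routine in the theory of algebras in fusion categories.
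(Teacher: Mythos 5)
Your proof is correct and follows essentially the same route as the paper: both reduce the corollary to the existence of a module trace on the module category $\catname C_A$ in the pseudo-unitary case via \cite[Prop.\,5.8]{Schau}, combined with Schaumann's dictionary between module traces and (non-degenerate) symmetric Frobenius structures and Ostrik's correspondence with Morita classes. The only difference is packaging — the paper states the reduction at the level of module categories (Theorem~\ref{thm:otherversion}), whereas you transport the module trace back to a non-degeneracy statement for $A$ itself via $\underline{\End}(A)\cong A$ before invoking Theorem~\ref{thm:main-intro} — but the inputs and the logical content are the same.
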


A result closely related to Theorem~\ref{thm:main-intro} is proven in \cite{Andersen:2008}. There it is shown that if there is a $g \ge 1$ such that $V_g^{\catname C}$ is irreducible, then for every simple non-degenerate algebra $A$, its full centre $Z(A) \in \catname C \boxtimes \catname{C}^\mathrm{rev}$ has underlying object $\bigoplus_{i \in I} i^\ast \times i$.
We recall the definition of the full centre in Section~\ref{sec:full-centre} and give a more detailed comparison to \cite{Andersen:2008} in Section~\ref{sec:mainthm}. Here we just note that under our stronger assumptions we can prove a stronger result, which in this language means that the full centres satisfy $Z(A) \cong Z(\unit)$ as algebras and not only as objects.
Our method of proof is quite different from that in \cite{Andersen:2008} and so may be of independent interest.

\medskip

Examples of $\catname C$ where all $V_g^{\catname C}$ are irreducible are when $\catname C$ is of Ising-type \cite{Jian:2019ubz,RR-prep}, and when it is given by $\catname C(sl(2),k)$ -- the modular fusion category for the affine Lie algebra $\widehat{sl}(2)$ at level $k$ -- when $k+2$ is prime \cite{Roberts:1999}. We will look at these in more detail in Example~\ref{ex:irred-or-not}, but it would certainly be good to have more examples at hand.

\begin{remark}
~
\begin{enumerate}
    \item 
The converse of the statement in Theorem~\ref{thm:main-intro} does not hold: $\catname C$ can have a unique Morita-class of simple non-degenerate algebras while e.g.\ $V^{\catname C}_{g=1}$ is reducible. In fact this is the typical situation. For example, for $\catname C(sl(2),k)$ with $k$ odd there is a unique such Morita-class \cite{Os}, but for $k+2$ odd and not prime or the square of a prime,
$V^{\catname C}_{g=1}$ is reducible, see Example~\ref{ex:irred-or-not}.
    
    \item In Theorem~\ref{thm:main-intro} it is enough to demand irreducibility of $V_g^{\catname C}$ for  $1 \le g \le 3N+2$, where $N$ is the length of the filtration of the adjoint subring of the Grothendieck ring of $\catname C$, see Remark~\ref{rem:maintheorem} and Section~\ref{sec:universalgroup} for details. A coarse bound for $N$ is the number of isomorphism classes of simple objects of $\catname C$, \ie $N \le |I|$.
    
\end{enumerate}
\end{remark}

A somewhat surprising motivation to look at irreducible mapping class group representations comes from quantum gravity in three dimensions. We summarise this in the next remark which can safely be skipped by readers less interested in speculations related to physics. Nonetheless, this is the reason why we started to study this problem.

\begin{remark}
~
\begin{enumerate}
    \item Euclidean AdS${}_3$ is topologically a solid torus. It has been argued in \cite{Maloney:2007ud} that the saddle-point approximation of the path integral of 3d quantum gravity includes a sum over geometries obtained by gluing the solid torus to its boundary by an element of $SL(2,\mathbb{Z})$, the mapping class group of the torus. By the AdS/CFT-correspondence one would expect 3d quantum gravity on AdS${}_3$ to be equivalent to a 2d conformal field theory  (or to an ensemble thereof) on its boundary, a 2-torus. One arrives at the following question: When does a sum over mapping class group orbits produce a consistent system of correlators of a 2d CFT? 
    In the context of rational 2d CFT, this has been analysed for genus 1 in \cite{Castro:2011zq} and for all genera for the Ising CFT in \cite{Jian:2019ubz}. The question whether one obtains a single 2d CFT or an ensemble has been investigated for WZW models in \cite{Meruliya:2021utr}. Comparing to the examples above, one finds 
    that e.g.\ for $SU(2)$-WZW models on the torus there is a single CFT
    in particular at levels
    $k$ with $k+2$ prime, where the mapping class group is known to act irreducibly on all $V^{\catname C}_g$. 
We will show  in \cite{RR-prep} that if the mapping class group orbits are finite (so that the sum is well-defined)
    and if the mapping class group representations are irreducible, then the sum produces a consistent system of rational 2d CFT correlators on surfaces of arbitrary genus and with insertion points.

    \item Morita classes of simple non-degenerate algebras in a modular fusion category $\catname C$ describe indecomposable surface defects in the 3d TQFT corresponding to $\catname C$ \cite{KS,FSV,CRS2}. In this context, Theorem~\ref{thm:main-intro} states that if all $V_g^{\catname C}$ are irreducible, then the corresponding 3d TQFT has no non-trivial surface defects. 
    Invertible surface defects are global symmetries of the 3d TQFT, and their absence ties in with a conjectural constraint on quantum gravity theories, namely that they should have no global symmetries, see \cite{Harlow:2018jwu} for a discussion in the context of AdS/CFT.
    
    Combining this with part 1, we see that, on the one hand, irreducibility of the $V_g^{\catname C}$ relates to consistency of the 2d CFT on the boundary and, on the other hand, to the absence of global symmetries of the 3d theory in the bulk.
    In fact, we obtain a stronger result, namely absence of all non-trivial surface defects, not just those related to global symmetries.
    We refer to \cite{RR-prep} for more details.
\end{enumerate}
\end{remark}

Before we start with the main part of the paper, let us mention the main ingredients in the proof.
They are the mapping class group actions obtained from 3d TQFT \cite{RT2,Tur} (Section~\ref{sec:MCG-action}), the invariants under this action obtained from non-degenerate algebras \cite{FRS1,KR1} (Section~\ref{sec:invariants}), the relation between Morita classes of algebras and their full centres \cite{KR,ENO} (Section~\ref{sec:full-centre}), and the universal grading group of a fusion category \cite{Gelaki:2006} (Section~\ref{sec:universalgroup}). The proof in Section~\ref{sec:mainthm} then works by reducing the difference between the algebra structures of the full centre of a given algebra and that of the tensor unit to a symmetric 2-cocycle on the universal grading group, which must be a coboundary.

\subsubsection*{Acknowledgements}

We would like to thank 
    Alexei Davydov,
    C\'esar Galindo,
    Vincentas Mulevi\v{c}ius,
    Timo Weigand
and
    especially Jens Fjelstad
for discussions and comments on a draft of this paper.
IRo is supported fully, and IRu partially, by the Deutsche
Forschungsgemeinschaft
(DFG, German Research Foundation) under Germany's Excellence Strategy - EXC 2121
``Quantum Universe'' - 390833306.

\subsubsection*{Conventions}

By a modular fusion category we mean a fusion category which is ribbon and whose braiding is non-degenerate in the sense that all transparent objects are isomorphic to a direct sum of tensor units. 
We refer to e.g.\ \cite[Sec.\,8.13]{EGNO} or \cite[Sec.\,4.5]{TuVire} for definitions and details.
Throughout this text, $\catname C$ will be a modular fusion category over an algebraically closed field $\mathbbm{k}$ of characteristic $0$. In order to simplify notation, we assume the monoidal structure of $\catname C$ to be strict.

\section{Mapping class group action on state spaces}\label{sec:MCG-action}

In this section we briefly review how to obtain projective representations of surface mapping class groups from a modular fusion category.

\medskip

\begin{figure}[tb]
    \centering
    \begin{overpic}[scale = 0.7]{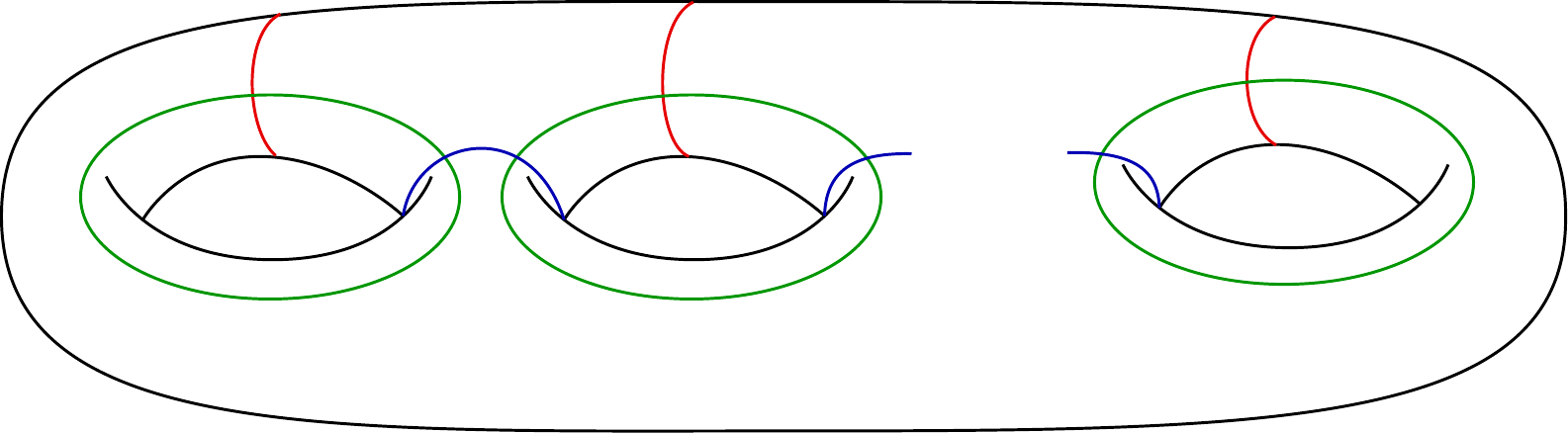}
        \put (17,27.5) {$\alpha_1$}
        \put (43,28.5) {$\alpha_2$}
        \put (81,27.5) {$\alpha_g$}
        \put (15,5.5) {$\beta_1$}
        \put (43,5.5) {$\beta_2$}
        \put (82,6) {$\beta_g$}
        \put (30,19.5) {$\gamma_1$}
        \put (54,19.5) {$\gamma_2$}
        \put (64.5,19.5) {$\gamma_{g-1}$}
        \put (61,17.5) {$\dots$}
    \end{overpic}
    \caption{Lickorish generators of $\Mod_g$.}
    \label{fig:mcggenerators}
\end{figure}

Let $\Sigma_g$ be a (smooth, compact, oriented, closed) surface of genus $g \ge 0$. We will denote by $\Mod_g = \Mod(\Sigma_g)$ the mapping class group, which is the group of isotopy classes of diffeomorphisms of $\Sigma_g$. 
Given a simple closed curve $\gamma$ on $\Sigma_g$, one can define the Dehn twist $T_\gamma$ as a mapping class in $\Mod_g$ \cite[Ch.\,3]{FM}. In fact, the mapping class group is finitely generated by such Dehn twists. An explicit set of generators, called the Lickorish generators, consists of Dehn twists along the curves shown in Figure~\ref{fig:mcggenerators}. Using these generators, one can define the so-called $S$-transformations $S_k := T_{\alpha_k}\circ T_{\beta_k}\circ T_{\alpha_k}$. Henceforth, we will use the alternative set of generators 
\begin{equation}\label{eq:MCG-gen}
\big\{\,T_{\alpha_1},\dots, T_{\alpha_g}, T_{\gamma_1},\dots,T_{\gamma_{g-1}},S_1,\dots,S_g\,\big\}~,
\end{equation}
where we replaced the generators $T_{\beta_k}$ by the corresponding $S$-transformations $S_k$.

\medskip

Given a modular fusion category (MFC) $\catname C$, the Reshetikhin-Turaev topological quantum field theory (RT-TQFT) for $\catname C$ gives rise to projective mapping class group representations \cite[Ch.\,IV.5]{Tur}. To describe these representations we first fix some notation. 

We will write $I$ for a set of representatives of the isomorphism classes of simple objects in $\catname C$, and we will assume that $\unit \in I$. Define the object
\begin{equation}\label{eq:L-def}
    L ~=~ \bigoplus_{i \in I} \, i \otimes i^*
    ~~\in\,\catname C
    ~.
\end{equation}
This object is used in the description of surgery in RT-TQFT. The quantum dimension of an object $U \in \catname C$ is denoted by $\dim_{\catname C}(U)$, and we abbreviate
\begin{equation}
    d_i = \dim_{\catname C}(i) ~~\text{for}~i \in I~,\quad 
    D = \sqrt{\dim_{\catname C} L} = \sqrt{{\textstyle \sum_{i \in I} (d_i)^2}}~.
\end{equation}
The choice of square root for $D$ does not matter here, it just changes the normalisation of one of the generators of the projective action below.

We will denote the projective representation of $\Mod_g$ by $V_g^{\catname C} \equiv V^{\catname C}(\Sigma_g)$. 
The underlying vector space is the Hom-space
\begin{equation}
    V_g^{\catname C} \equiv 
    V^{\catname C}(\Sigma_g) ~:=~ \catname C (\unit, L^{\otimes g}) ~.
\end{equation}
The Hom-space $V^{\catname C}(\Sigma_g)$ decomposes into the direct sum $\bigoplus_{i\in I^g} V_i$ where $i = (i_1,\dots,i_g)$ is a multi-index and $V_i := \catname C(\unit, i_1\otimes i_1^\ast \otimes \dots \otimes i_g\otimes i_g^\ast)$. 
The generators \eqref{eq:MCG-gen} act on $f_i \in V_i$ as follows:
\begin{align}\label{eq:MCG-gen-action}
T_{\alpha_k}(f_i) = \theta_{i_k} &\boxpic{0.7}{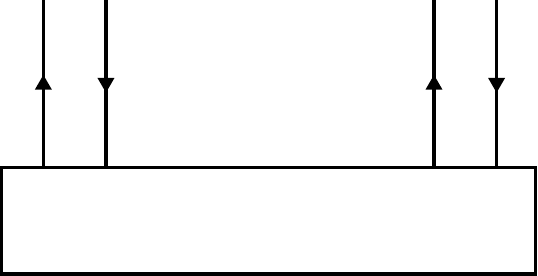}{
        \put (6,53) {$i_1$}
        \put (18,53) {$i_1^\ast$}
        \put (78,53) {$i_g$}
        \put (91,53) {$i_g^\ast$}
        \put (45,7.5) {$f_i$}
        \put (44,32.5) {$\cdots$}
        }
&
T_{\gamma_k}(f_i) =
    \boxpic{0.7}{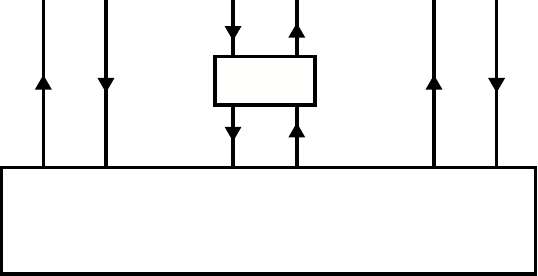}{
        \put (6,53) {$i_1$}
        \put (18,53) {$i_1^\ast$}
        \put (41,53) {$i_k^\ast$}
        \put (52.5,53) {$i_{k+1}$}
        \put (78.5,53) {$i_g$}
        \put (90.5,53) {$i_g^\ast$}
        \put (45,7.5) {$f_i$}
        \put (46.5,32.8) {\small$\theta$}
        \put (25,32.5) {$\cdots$}
        \put (63,32.5) {$\cdots$}
        }
\nonumber\\[2ex]
S_k(f_i)=\bigoplus_{j\in I} \frac{d_j}{D}
    &\boxpic{0.7}{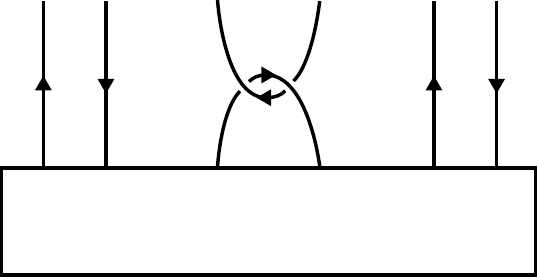}{
        \put (6,53) {$i_1$}
        \put (18,53) {$i_1^\ast$}
        \put (37,53) {$j$}
        \put (55,53) {$j^\ast$}
        \put (78,53) {$i_g$}
        \put (90,53) {$i_g^\ast$}
        \put (45,7.5) {$f_i$}
        \put (32,25) {$i_k$}
        \put (25,32.5) {$\cdots$}
        \put (63,32.5) {$\cdots$}
    }
\end{align}
Here we use string diagram notations for morphisms in $\catname C$. Our diagrams are read from bottom to top and our conventions for dualities, braiding and twist match those in \cite[Sec.\,2.1]{FRS1}. The constant $\theta_{i_k} \in \mathbbm{k}^\times$ is the value of the ribbon twist on the simple object $i_k$.

The expressions for the generators $T_{\alpha_k}$ and $S_k$ are given e.g.\ in~\cite[Def. 3.1.15]{BK}.

\begin{figure}[tb]
    \centering
    \begin{overpic}[scale = 0.7]{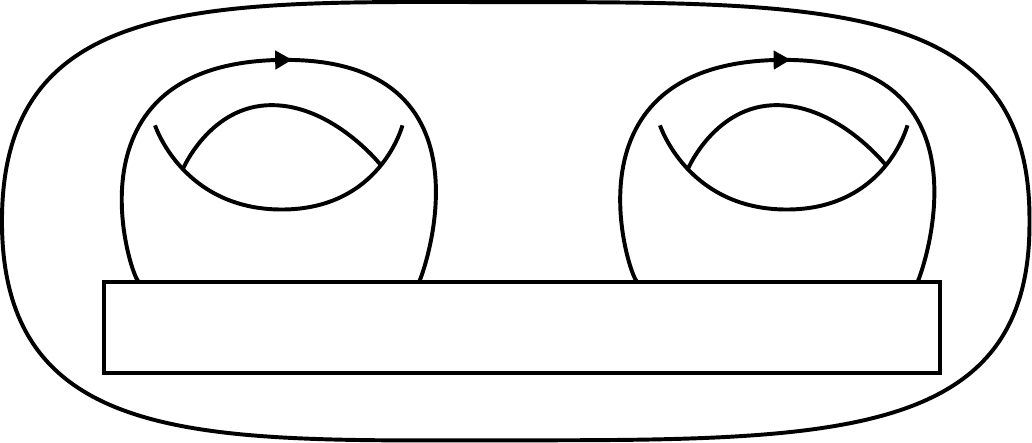}
        \put (50,9.5) {$f_i$}
        \put (10,33) {$i_1$}
        \put (58,33) {$i_g$}
        \put (48,20) {$\cdots$}
    \end{overpic}
    \caption{A handlebody with embedded ribbon graph. The coupon is labelled by a morphism $f_i :\unit \to i_1\otimes i_1^\ast \otimes \dots \otimes i_g\otimes i_g^\ast$.}
    \label{fig:handlebodycoupon}
\end{figure}

\begin{remark}
\label{rem:RT}
The RT-TQFT is a symmetric monoidal functor from the category of surfaces with $\catname C$-coloured marked points and three-dimensional bordisms with embedded $\catname C$-coloured ribbon graphs (equipped with certain additional decorations) to the category of $\mathbbm{k}$-vector spaces~\cite[Ch.\,IV]{Tur}.
\\
To a surface $\Sigma_g$ without marked points, the RT-TQFT assigns the vector space $V^{\catname C}(\Sigma_g)$. In terms of the TQFT, a vector $f_i \in V^{\catname C}(\Sigma_g)$ is obtained by applying the TQFT-functor to the handlebody shown in 
 Figure~\ref{fig:handlebodycoupon}, thought of as a bordism $\emptyset \to \Sigma_g$.
\\
The action of $[\phi] \in \Mod_g$ is obtained by evaluating the TQFT-functor on the mapping cylinder $\Sigma_g \times [0,1]_\phi$, where the index $\phi$ indicates that one of the boundary parametrisations is given by $\phi$, not by $\id{}$. The projective nature of the representation originates from the extra decorations whose description we skipped.
\\
For example, for the choice of handlebody in Figure~\ref{fig:handlebodycoupon},
the curves $\alpha_1,\dots, \alpha_g$ and $\gamma_1, \dots \gamma_{g-1}$ are contractible and the corresponding Dehn twists will act on the morphism space by twisting the ribbons passing through these curves. This results in the expressions for $T_{\alpha_k}(f_i)$ and $T_{\gamma_k}(f_i)$ given in \eqref{eq:MCG-gen-action}.
\end{remark}

Given two MFCs $\catname C$ and $\catname D$, the Deligne-product $\catname C\boxtimes \catname D$ is obtained by taking pairs of objects, one from $\catname C$ and one from $\catname D$, and tensor products over $\mathbbm{k}$ of Hom-spaces. Finally one completes with respect to direct sums. In particular, for any closed surface $\Sigma$,
\begin{equation}
    V^{\catname C\boxtimes \catname D}(\Sigma) ~\cong~ V^{\catname C}(\Sigma)\otimes_{\mathbbm{k}} V^{\catname D}(\Sigma)
\end{equation}
as $\mathbbm{k}$-vector spaces.
Let us write $\catname C^\mathrm{rev}$ for the category obtained from $\catname C$ by taking the inverse braiding and twist. We will be particularly interested in the product
\begin{equation}
\doublecat := \catname C\boxtimes \catname C^\mathrm{rev}~.
\end{equation}
The version of the object $L$ in \eqref{eq:L-def} for $\doublecat$ will be denoted by $\mathbb{L}$~:
\begin{equation}\label{eq:LL-def}
\mathbb{L} ~:=~ \bigoplus_{i,j \in I}\,  (i \times j) \otimes (i^* \times j^*) ~~\in\,\doublecat~.
\end{equation}
Accordingly, the mapping class group $\Mod_g$ acts on $V^{\doublecat}(\Sigma_g) = \doublecat(\unit\times \unit, \mathbb{L}^{\otimes g})$.

The next lemma follows directly from \cite[Lem.\,VII.4.3.1]{Tur} (see \cite[Sec.\,I.1.4]{Tur} for the definition of $\overline{\catname C}$ used in that lemma, which agrees with our $\catname C^\mathrm{rev}$).

\begin{lemma}\label{lem:doubletheory}
There is an isomorphism $V^{\doublecat}(\Sigma) \cong V^{\catname C}(\Sigma) \otimes_{\mathbbm{k}} V^{\catname C}(\Sigma)^\ast$ which is equivariant with respect to the mapping class group action.  
\end{lemma}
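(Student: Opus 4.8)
The plan is to reduce Lemma~\ref{lem:doubletheory} to the cited statement \cite[Lem.\,VII.4.3.1]{Tur}, which identifies the RT-TQFT vector space of a surface for the "doubled" theory $\catname C \boxtimes \catname C^\mathrm{rev}$ (in Turaev's notation, the theory built from $\catname C$ together with its mirror $\overline{\catname C} = \catname C^\mathrm{rev}$) with $V^{\catname C}(\Sigma) \otimes_{\mathbbm k} \overline{V^{\catname C}(\Sigma)}$, where the bar denotes the dual/conjugate space carrying the conjugate action. First I would spell out the identification $V^{\doublecat}(\Sigma) \cong V^{\catname C}(\Sigma) \otimes_{\mathbbm k} V^{\catname C}(\Sigma)$ coming from the Deligne-product compatibility $V^{\catname C \boxtimes \catname D}(\Sigma) \cong V^{\catname C}(\Sigma) \otimes_{\mathbbm k} V^{\catname D}(\Sigma)$ already recorded above, specialised to $\catname D = \catname C^\mathrm{rev}$, together with the canonical isomorphism $V^{\catname C^\mathrm{rev}}(\Sigma) \cong V^{\catname C}(\Sigma)^\ast$ that reverses orientation/braiding; this last is exactly the content of Turaev's lemma, which says that passing to the mirror theory dualises the state space.

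Next I would address equivariance, which is the only non-formal point. The mapping class group action is defined (Remark~\ref{rem:RT}) by evaluating the TQFT-functor on mapping cylinders $\Sigma \times [0,1]_\phi$. Under the Deligne product, the TQFT for $\doublecat$ assigns to a bordism $M$ the tensor product of the invariants assigned by the $\catname C$-theory and the $\catname C^\mathrm{rev}$-theory; hence on the mapping cylinder of $[\phi]$ it acts as $\rho^{\catname C}_\Sigma([\phi]) \otimes \rho^{\catname C^\mathrm{rev}}_\Sigma([\phi])$. So it remains to identify $\rho^{\catname C^\mathrm{rev}}_\Sigma([\phi])$ with the contragredient (inverse-transpose) of $\rho^{\catname C}_\Sigma([\phi])$ under the pairing $V^{\catname C^\mathrm{rev}}(\Sigma) \cong V^{\catname C}(\Sigma)^\ast$. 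This is precisely how the mirror theory relates to the original one: reversing the orientation of the bordism $M$ sends the $\catname C$-invariant of $M$ to the $\catname C^\mathrm{rev}$-invariant of the orientation-reversed $M$, and for a mapping cylinder orientation reversal exchanges in- and out-boundary, turning a map into its transpose and inverting $\phi$. One should check that the projective anomalies also match up (they are governed by the central charge, which changes sign under $\catname C \mapsto \catname C^\mathrm{rev}$, so the anomaly of the product theory is trivialised on the diagonal-type pairing, consistent with an honest — up to the overall scalar — equivariant isomorphism). I would phrase all of this by simply invoking \cite[Lem.\,VII.4.3.1]{Tur}, which already packages the orientation-reversal/duality statement, and noting that the $\Mod_g$-action is functorial in the TQFT.

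The main obstacle I anticipate is purely bookkeeping: matching conventions between Turaev's formulation (his $\overline{\catname C}$, his normalisation of the gluing anomaly, his identification of $V(\Sigma)$ with a homomorphism space versus ours $\catname C(\unit, L^{\otimes g})$) and the conventions fixed in this paper, including the choice of square root of $D$ and the conventions of \cite[Sec.\,2.1]{FRS1}. In particular one must make sure the pairing $V^{\catname C^\mathrm{rev}}(\Sigma) \otimes V^{\catname C}(\Sigma) \to \mathbbm k$ used to form the dual is the one compatible with the gluing of $\Sigma \times [0,1]$ to itself (i.e.\ the TQFT pairing obtained by closing up the cylinder), so that equivariance comes out on the nose rather than only up to an outer automorphism. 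Since the cited lemma is stated exactly at this level of generality, I expect no genuine mathematical difficulty — the proof is essentially a one-line reduction once the dictionary $\overline{\catname C} \leftrightarrow \catname C^\mathrm{rev}$ is in place, which is why the excerpt introduces this lemma as following "directly" from Turaev.
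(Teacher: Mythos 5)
Your proposal is correct and matches the paper's approach: the paper gives no separate proof, stating only that the lemma ``follows directly from [Tur, Lem.\,VII.4.3.1]'' after identifying Turaev's $\overline{\catname C}$ with $\catname C^{\mathrm{rev}}$, which is exactly the reduction you carry out. The additional details you supply (the Deligne-product factorisation of the state space, the contragredient action on the mirror factor, and the cancellation of the projective anomaly via the opposite central charges) are accurate unpackings of what that citation contains.
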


The action of $\Mod_g$ on  $V^{\catname C}(\Sigma_g) \otimes_{\mathbbm{k}} V^{\catname C}(\Sigma_g)^\ast \cong \End_{\mathbbm{k}}(V^{\catname C}(\Sigma_g))$ is by conjugation, thus the projective factors cancel and we obtain a non-projective action.
From Lemma~\ref{lem:doubletheory} we get:

\begin{corollary}\label{cor:mcginvariants}
Let $\Sigma$ be a surface such that $V^{\catname C}(\Sigma)$ is an irreducible projective mapping class group representation. Then the space of mapping class group invariants in $V^{\doublecat}(\Sigma)$ is one-dimensional,
\begin{equation*}
    \dim V^{\doublecat}(\Sigma)^{\Mod(\Sigma)} ~=~ 1~.
\end{equation*}
\end{corollary}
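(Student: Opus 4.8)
The plan is to deduce Corollary~\ref{cor:mcginvariants} directly from Lemma~\ref{lem:doubletheory} together with Schur's lemma, using the standard fact that the invariants of a conjugation action on $\End_{\mathbbm{k}}(V)$ are precisely the morphisms of projective representations. First I would invoke Lemma~\ref{lem:doubletheory} to identify $V^{\doublecat}(\Sigma)$ equivariantly with $V^{\catname C}(\Sigma)\otimes_{\mathbbm{k}} V^{\catname C}(\Sigma)^\ast$, and then use the canonical isomorphism $V\otimes_{\mathbbm{k}} V^\ast \cong \End_{\mathbbm{k}}(V)$ for the finite-dimensional space $V = V^{\catname C}(\Sigma)$. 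Under this identification, the (projective) $\Mod(\Sigma)$-action on $V$ transports to the conjugation action $\phi \cdot f = \rho(\phi)\,f\,\rho(\phi)^{-1}$ on $\End_{\mathbbm{k}}(V)$, where $\rho$ is any set-theoretic lift of the projective representation; the scalar ambiguity in $\rho(\phi)$ cancels between the two factors, so the action is a genuine (non-projective) linear representation of $\Mod(\Sigma)$, as already noted in the text preceding the corollary.

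Next I would observe that an element $f \in \End_{\mathbbm{k}}(V)$ is $\Mod(\Sigma)$-invariant under conjugation if and only if $f$ commutes with $\rho(\phi)$ for all $\phi \in \Mod(\Sigma)$, i.e.\ if and only if $f$ is an endomorphism of the projective representation $V^{\catname C}(\Sigma)$. By hypothesis this representation is irreducible, so by Schur's lemma over the algebraically closed field $\mathbbm{k}$ the space of such endomorphisms is spanned by $\id{V}$, hence one-dimensional. (Here I use that irreducibility of a projective representation is equivalent to irreducibility of the associated linear representation of any central extension through which $\rho$ factors, so Schur's lemma applies verbatim.) Tracing back through the equivariant isomorphisms gives $\dim V^{\doublecat}(\Sigma)^{\Mod(\Sigma)} = \dim \End_{\Mod(\Sigma)}(V^{\catname C}(\Sigma)) = 1$, which is the claim.

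There is essentially no hard part here; the only point that needs a word of care is the interplay between the projectivity of the $V^{\catname C}(\Sigma)$-action and the honesty of the $V^{\doublecat}(\Sigma)$-action. One must check that the equivariant isomorphism of Lemma~\ref{lem:doubletheory} is equivariant for the genuine (not merely projective) action on the right-hand side, which is exactly what the sentence ``the projective factors cancel and we obtain a non-projective action'' asserts; I would make this explicit by noting that the cocycle governing the projective $\catname C$-action and its inverse governing the $\catname C^\mathrm{rev}$-factor multiply to the trivial cocycle. Once this bookkeeping is in place, the Schur argument is immediate, so I expect the proof to be a short paragraph rather than anything requiring real work.
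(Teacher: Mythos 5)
Your proposal is correct and follows essentially the same route as the paper: identify $V^{\doublecat}(\Sigma)$ equivariantly with $\End_{\mathbbm{k}}(V^{\catname C}(\Sigma))$ via Lemma~\ref{lem:doubletheory}, note that the invariants of the conjugation action are the $\Mod(\Sigma)$-equivariant endomorphisms, and apply Schur's lemma for projective representations. The extra care you take with the cancellation of the projective cocycles is exactly the point the paper addresses in the sentence preceding the corollary, so nothing is missing.
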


\begin{proof}
The space of mapping class group invariants in $V^{\catname C}(\Sigma) \otimes V^{\catname C}(\Sigma)^\ast$ corresponds to the space of $\Mod(\Sigma)$-equivariant maps $\End_{\Mod(\Sigma)}(V^{\catname C}(\Sigma))$. By Schur's Lemma (for projective representations), the latter is a one-dimensional. Lemma \ref{lem:doubletheory} then implies that the space of mapping class group invariants in $V^{\doublecat}(\Sigma)$ is one-dimensional. 
\end{proof}

\section{Modular invariant Frobenius algebras}
\label{sec:invariants}

In this section, we describe how to obtain mapping class group invariants from a modular invariant symmetric Frobenius algebra. We begin by recalling some algebraic notions following the conventions in \cite{FRS1}. We describe all notions in the MFC $\catname C$, even though they make sense in much greater generality, see e.g.\ \cite{FRS1,FS09} or \cite[Sec.\,7.8]{EGNO} for presentations in more general settings.

\medskip

An \textit{algebra} in $\catname C$ is an object $A \in \catname C$ equipped with morphisms $\eta: \unit \rightarrow A$ (unit) and $\mu: A\otimes A \rightarrow A$ (product), subject to unitality and associativity. It is called \textit{commutative} if $\mu \circ c_{A,A} = \mu$, where $c_{A,A}$ denotes the braiding in $\catname C$.
Dually, a \textit{coalgebra} $C$ is an object with morphisms $\varepsilon: C \rightarrow \unit$ (counit) and $\Delta: C \rightarrow C\otimes C$ (coproduct), which are subject to counitality and coassociativity.

An algebra $A \in \catname C$ is called \textit{simple} if it is simple as a bimodule over itself. It is called \textit{haploid} if $\catname C(\unit,A) = \mathbbm{k}\,\eta$. A haploid algebra is automatically simple \cite[Lem.\,4.5]{Fuchs:2001qc}.

A \textit{Frobenius algebra} is an object $A \in \catname C$ equipped with an algebra and a coalgebra structure satisfying the Frobenius property, which in string diagram notation reads
\begin{equation}\label{eq:Frobprop}
    \boxpic{0.7}{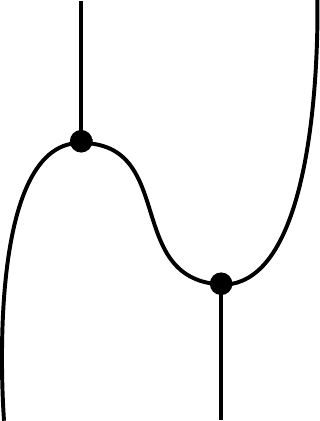}{\put (23,40) {$A$}
    \put (7,70) {$\mu$}
    \put (55,20) {$\Delta$}}~=~\boxpic{0.7}{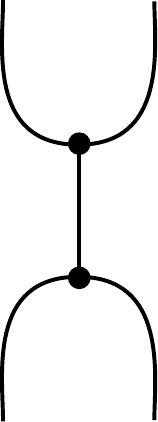}{\put (6,40) {$A$}}~=~\boxpic{0.7}{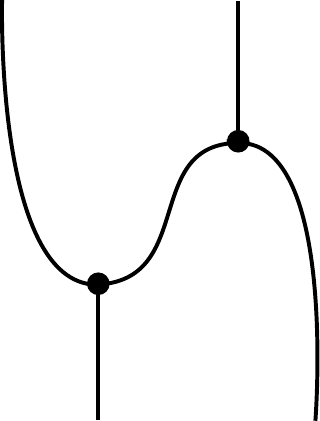}{\put (40,40) {$A$}}
    \quad .
\end{equation}
In this diagram, all lines are labelled $A$ and all diagrams give endomorphisms of $A \otimes A$. The dots label product or coproduct as appropriate for the number of in/out going strands. We will omit the labels for (co)products below.

A Frobenius algebra $A$ 
is \textit{$\Delta$-separable} if $\mu \circ \Delta = \id{}$, and it is 
is \textit{special} if $\varepsilon\circ \eta\neq 0$ and $\mu \circ \Delta = \zeta \id{}$ for some $\zeta \in \mathbbm{k}^\times$. We call $A$ \textit{normalised-special} if $\zeta=1$, or, equivalently, if it is $\Delta$-separable and special.

A Frobenius algebra $ A $ is called \textit{symmetric} if 
\begin{equation}\label{eq:symmetric}
    \boxpic{0.7}{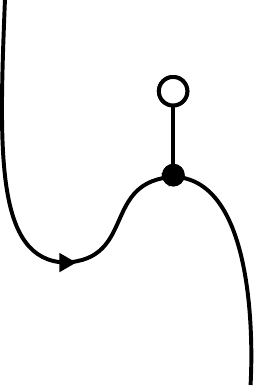}{\put (58,-14) {$A$}
    \put (-5,102) {$A^\ast$}
    \put (50,80) {$\varepsilon$}}~=~\boxpic{0.7}{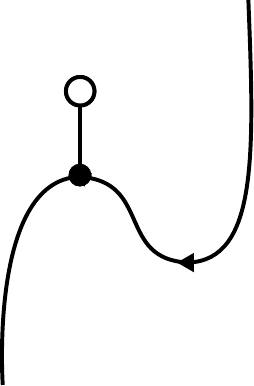}{\put (-5,-14) {$A$}
    \put (60,102) {$A^\ast$}
    \put (25,80) {$\varepsilon$}}
    \quad  .
\end{equation}
As for product and coproduct, we will suppress the label $\varepsilon$ in the string diagram notation of the counit. The notation for the unit $\eta$ is a horizontally flipped version of that for the counit.

Given an algebra $A$, define the morphism $\Phi: A \rightarrow A^\ast$ as
\begin{equation}\label{eq:algebraphi}
    \Phi = \boxpic{0.7}{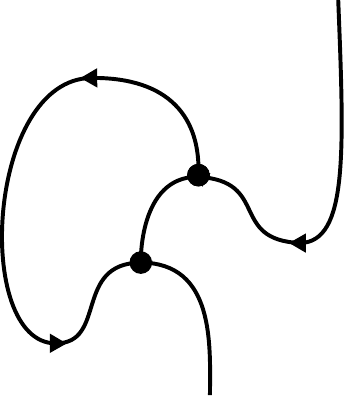}{
    \put (55,0) {$A$}}
    \quad .
\end{equation}
An algebra $A$ is called \textit{non-degenerate} if $\Phi$ is an invertible morphism. 

\begin{lemma}\label{lem:nondeg-ssFa}
Let $A$ be a non-degenerate algebra in $\catname C$. Then:
\begin{enumerate}
    \item There is a unique coproduct and counit on $A$ such that $A$ becomes a symmetric Frobenius algebra on $A$, and such that the isomorphism $A \to A^*$ in \eqref{eq:symmetric} agrees with $\Phi$
    in \eqref{eq:algebraphi}.
\item The Frobenius algebra in part 1 is $\Delta$-separable, and it is normalised-special iff $\dim_{\catname C}(A)\neq 0$.
\item If $A$ is simple, then $\dim_{\catname C}(A)\neq 0$.
\end{enumerate} 
\end{lemma}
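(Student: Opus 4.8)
\emph{Proof proposal.}
For part~1 I would settle uniqueness before existence. In any Frobenius algebra the counit is recovered from the Frobenius pairing $\varepsilon\circ\mu$ as $\varepsilon=(\varepsilon\circ\mu)\circ(\eta\otimes\id_A)$, and the copairing $\Delta\circ\eta$ is the (necessarily unique) two-sided dual of $\varepsilon\circ\mu$, so that $\Delta$ is in turn determined by $\mu$ and $\varepsilon\circ\mu$; requiring the Frobenius pairing to be the one with mate $\Phi$ therefore pins down both $\varepsilon$ and $\Delta$. For existence I would take these formulas as definitions, i.e.\ $\varepsilon=\mathrm{tr}_{\catname C}\circ L_{(-)}$ (the trace of left multiplication, so that $\varepsilon\circ\mu$ is exactly the trace pairing whose mate is $\Phi$) and $\Delta$ the transpose of $\mu$ along $\Phi$, and then verify counitality, coassociativity and the Frobenius relation \eqref{eq:Frobprop}. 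Each of these reduces to a short diagrammatic manipulation using only associativity and unitality of $(\mu,\eta)$ together with the duality identities for $\Phi$ and $\Phi^{-1}$. The one structural point is the symmetry condition \eqref{eq:symmetric}: it amounts to cyclic invariance of the pairing $a\otimes b\mapsto\mathrm{tr}_{\catname C}(L_{ab})$, which holds because the categorical trace in a ribbon category is cyclic. This is the standard construction (cf.\ the conventions of \cite{FRS1}).

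For part~2, write $b=\Delta\circ\eta$ for the copairing. Associativity of $\mu$ gives that $\mu\circ\Delta$ is multiplication by the Casimir element $c=\mu\circ b$. For the trace pairing this Casimir equals the unit: over $\mathrm{Vec}$ with $A=M_n(\mathbbm{k})$ it is the identity $\tfrac1n\sum_{i,j}E_{ij}E_{ji}=\unit$, and the factor $\tfrac1n$ there is precisely the normalisation built into $\Phi$. Hence $c=\eta$ and $\mu\circ\Delta=\id_A$, which is $\Delta$-separability. For the second assertion, $\varepsilon\circ\eta=\mathrm{tr}_{\catname C}(L_\eta)=\mathrm{tr}_{\catname C}(\id_A)=\dim_{\catname C}(A)$; since $\mu\circ\Delta=\id_A$ holds in any case, $A$ is special — equivalently, normalised-special — if and only if $\varepsilon\circ\eta\neq 0$, i.e.\ if and only if $\dim_{\catname C}(A)\neq 0$.

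For part~3, the main obstacle, it remains to show $\dim_{\catname C}(A)=\varepsilon\circ\eta\neq 0$ when $A$ is simple. The starting point is that $\Phi$ being invertible makes the Frobenius pairing restrict to a non-degenerate bilinear form on the finite-dimensional space $\catname C(\unit,A)$, under which $\eta$ pairs with itself to $\varepsilon\circ\eta=\dim_{\catname C}(A)$. If $A$ is haploid then $\catname C(\unit,A)=\mathbbm{k}\,\eta$ is one-dimensional, and a non-degenerate form on a line cannot annihilate a spanning vector, so $\dim_{\catname C}(A)\neq 0$. For general simple $A$ I would reduce to this case by Morita theory: $A$ is $\Delta$-separable by part~2 and hence separable, so the category $\catname C_A$ of right $A$-modules is semisimple, and it is indecomposable because $A$ is simple as a bimodule; for any simple $M\in\catname C_A$ the internal endomorphism algebra $\underline{\End}_{\catname C}(M)$ is a haploid algebra Morita-equivalent to $A$, still separable, hence non-degenerate, and the haploid case applies to it. The delicate remaining step is to transfer nonvanishing from $\underline{\End}_{\catname C}(M)$ back to $A$: one expresses $\dim_{\catname C}(A)$ through $\dim_{\catname C}\underline{\End}_{\catname C}(M)$, the positive-integer multiplicities of the simple right $A$-modules in the regular module, and the $\catname C$-dimensions of the underlying objects of those modules, and then uses modularity of $\catname C$ — so that every simple object of $\catname C$ has nonzero dimension, by invertibility of the $S$-matrix — together with $\mathrm{char}\,\mathbbm{k}=0$ to exclude cancellations. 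This is the least formal part of the argument; alternatively one may quote the known fact that a simple symmetric (special) Frobenius algebra in a modular fusion category over an algebraically closed field of characteristic zero has nonzero dimension, see \cite{FRS1,KR1}.
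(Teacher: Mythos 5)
Parts 1 and 2 of your proposal are the standard construction, and the paper does not reprove them but simply cites \cite[Lem.\,2.3]{KR}; your sketch is consistent with that, although your justification of $\Delta$-separability by computing the Casimir of $M_n(\mathbbm{k})$ in $\mathrm{Vec}$ is an illustration rather than an argument valid in a general ribbon category. Your haploid case of part 3 is correct and is exactly the first half of the paper's Appendix~\ref{app:simple-lemma}: invertibility of $\Phi$ together with semisimplicity makes $\langle x,y\rangle=\varepsilon\circ\mu\circ(x\otimes y)$ a non-degenerate pairing on $A_{\mathrm{top}}=\catname C(\unit,A)$, and if this space is one-dimensional then $\langle\eta,\eta\rangle=\dim_{\catname C}(A)\neq 0$ follows at once.

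The genuine gap is your reduction of part 3 from simple to haploid. The transfer step you yourself flag as delicate does not work as described: you propose to express $\dim_{\catname C}(A)$ as a positive-integer combination of the $\catname C$-dimensions of the underlying objects of the simple right $A$-modules and to ``exclude cancellations'' using modularity and $\mathrm{char}\,\mathbbm{k}=0$. Modularity (or just being a fusion category in characteristic zero) only guarantees that simple objects of $\catname C$ have \emph{nonzero} dimension, not positive dimension -- non-pseudo-unitary examples have simple objects of negative quantum dimension -- so an integer combination $\sum_\kappa n_\kappa \dim_{\catname C}(M_\kappa)$ can perfectly well vanish, and characteristic zero does nothing to prevent this. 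The remaining fallback, ``quote the known fact'', is quoting the statement to be proved. The paper's Appendix~\ref{app:simple-lemma} avoids Morita reduction entirely: it introduces the averaging map $p$ on $A_{\mathrm{top}}$ built from $\Delta$ and $\mu$, shows $p^2=p$, $p(\eta)=\eta$, and that $p$ is self-adjoint for the pairing (using sphericality and the symmetry of $A$), and identifies $\Im(p)$ with $\{\,f\circ\eta \mid f \text{ an } A\text{-}A\text{-bimodule endomorphism of } A\,\}=\mathbbm{k}\,\eta$ by simplicity; non-degeneracy of the pairing then gives $0\neq\langle\eta,y\rangle=\langle p(\eta),y\rangle=\langle\eta,p(y)\rangle=\lambda\langle\eta,\eta\rangle$. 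This self-adjoint idempotent is the missing idea, and it yields the statement in any spherical multifusion category over an algebraically closed field of arbitrary characteristic, with no appeal to modularity. If you insist on the Morita route you would have to prove, not assume, both that $\underline{\End}(M)$ is haploid and non-degenerate and a dimension identity relating $\dim_{\catname C}(A)$, $\dim_{\catname C}(\underline{\End}(M))$ and $\dim_{\catname C}(M)$ together with $\dim_{\catname C}(M)\neq 0$ -- each of which is at least as hard as the lemma itself.
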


Parts 1 and 2 of this lemma are proved in \cite[Lem.\,2.3]{KR}, for part 3 we refer to Appendix~\ref{app:simple-lemma}.

\begin{remark}\label{rem:non-deg-vs-Frob}
The reason why we work with non-degenerate algebras instead of directly with Frobenius algebras is that being non-degenerate is a \textsl{property} of an algebra. Being Frobenius is, first of all,  \textsl{more data} (coproduct and counit). It becomes a property when one adds the conditions of symmetry and $\Delta$-separability. Namely, an algebra is non-degenerate iff it is $\Delta$-separable symmetric Frobenius, cf.\ \cite[Lem.\,2.3]{KR}
\end{remark}

For $B \in \doublecat$ let $\{ \alpha \}$ be a basis of $\doublecat(i\times j,B)$
and let $\{ \overline\alpha \}$ be the dual basis of $\doublecat(B, i\times j)$ in the sense that
$\overline\alpha \circ \beta = \delta_{\alpha,\beta} \id{i \times j}$.
A key ingredient in our proof will be the notion of a modular invariant algebra from \cite[Def.\,3.1]{KR1} (using the alternative formulation in \cite[Lem.\,3.2]{KR1}).

\begin{definition}\label{def:modinv-alg}
An algebra $B$ in $\doublecat$ is called modular invariant if $\theta_B = \id B$ and if the product is \textit{$S$-invariant}, \ie
\begin{equation*}
    \boxpic{0.7}{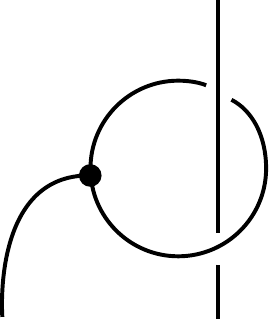}{
        \put (-10,-16) {$B$}
        \put (50,102) {$i\times j$}
        \put (50,-16) {$i\times j$}
        }
~=~
\frac{D^2}{d_i d_j}
\sum_{\alpha}
    \boxpic{0.7}{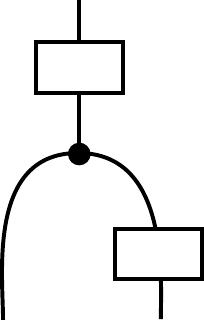}{
        \put (20,73.5) {\small$\overline\alpha$}
        \put (43,16.5) {\small$\alpha$}
        \put (-10,-16) {$B$}
        \put (7,102) {$i\times j$}
        \put (32,-16) {$i\times j$}}
        \rule[-3.2em]{0em}{1em}
        \quad .
\end{equation*}
\end{definition}
Let $B$ be a symmetric Frobenius algebra. For an integer $n\ge 2$ we write $\Delta^{(n)}: B \rightarrow B^{\otimes n}$ for the iterated coproduct, so that $\Delta = \Delta^{(2)}$. For $g \ge 1$
define the elements 
\begin{equation}
C(B)_g ~\in~ V^{\doublecat}(\Sigma_g) = \doublecat(\unit\times\unit, \mathbb{L}^{\otimes g})
\end{equation}
by setting
\begin{equation}\label{eq:definvariants}
    C(B)_g ~:=~ 
        \bigoplus_{i_1,j_1,\dots,i_g,j_g}
        \sum_{\alpha_1,\dots,\alpha_g}
        \boxpic{0.7}{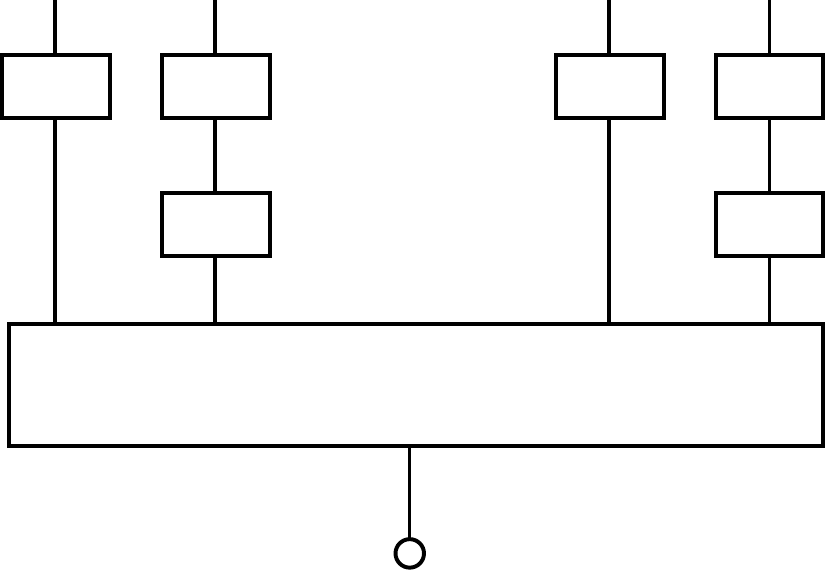}{
            \put (46,20) {$\Delta^{(2g)}$}
            \put (3.5,57) {\small$\overline\alpha_1$}
            \put (22,57.2) {\small$\alpha_1^\ast$}
            \put (71,57) {\small$\overline\alpha_g$}
            \put (90,57.2) {\small$\alpha_g^\ast$}
            \put (23.5,39.3) {\small$\Phi$}
            \put (91,39.3) {\small$\Phi$}
            \put (-3,71) {\small$i_1\times j_1$}
            \put (18,71) {\small$i_1^\ast\times j_1^\ast$}
            \put (64,71) {\small$i_g\times j_g$}
            \put (87,71) {\small$i_g^\ast\times j_g^\ast$}
            \put (52,0) {$\eta$}}
\quad .
\end{equation}
For $g=0$ we have $\Sigma_g = S^2$ and we set
\begin{equation}
    C(B)_0 ~:=~ \varepsilon \circ \eta ~~\in~ 
    V^{\doublecat}(S^2) = \doublecat(\unit\times\unit, \unit\times\unit) = \mathbbm{k} \id{\unit\times\unit} ~.
\end{equation}    

The next proposition follows from \cite{FRS1, KR1, KLR}, but it can also be shown directly  and we give a short proof here for the convenience of the reader.

\begin{proposition}\label{prop:CB-mod-inv}
Let $B \in \doublecat$ be a non-degenerate modular invariant algebra. Then for each 
$g \ge 0$ the vector $C(B)_g \in V^{\doublecat}(\Sigma_g)$ is $\Mod_g$-invariant.
\end{proposition}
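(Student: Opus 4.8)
The plan is to show that each generator in the list \eqref{eq:MCG-gen} fixes $C(B)_g$, working generator-type by generator-type and using the algebraic hypotheses on $B$ one at a time.

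First I would treat the Dehn twists $T_{\alpha_k}$. From \eqref{eq:MCG-gen-action}, acting by $T_{\alpha_k}$ multiplies the $(i_k,j_k)$-summand by $\theta_{i_k}$ and twists the corresponding pair of legs. In the picture \eqref{eq:definvariants} these two legs emanate from a single application of $\Phi$ composed with a piece of $\Delta^{(2g)}$; using naturality of the twist and the fact that $\theta$ acts as a scalar on each simple summand, the twist on the legs labelled $i_k\times j_k$ and $i_k^\ast\times j_k^\ast$ can be pulled down onto the $B$-line and recombined into $\theta_B$ acting on that strand of the iterated coproduct. The hypothesis $\theta_B = \id B$ then cancels this against the scalar $\theta_{i_k}$ coming from $\Phi$ being built from the duality and counit, i.e.\ the twist eigenvalue bookkeeping works out because $\Phi:A\to A^\ast$ together with the twist on $A$ is what produces the $\theta_{i_k}$ factor. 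Concretely I would invoke the standard identity that a symmetric Frobenius algebra with $\theta_B=\id{}$ has "balanced" structure morphisms, so that $T_{\alpha_k}(C(B)_g) = C(B)_g$.

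Next, the $T_{\gamma_k}$ generators. Here the action inserts a twist $\theta$ on the cylinder joining the $k$-th and $(k+1)$-st handles, which in \eqref{eq:definvariants} corresponds to twisting the portion of the $B$-line of $\Delta^{(2g)}$ that lies between the $i_k^\ast\times j_k^\ast$ leg and the $i_{k+1}\times j_{k+1}$ leg. Since $B$ is an algebra in $\doublecat = \catname C\boxtimes\catname C^\mathrm{rev}$ and $\theta_B = \id B$, coassociativity lets me slide this twist to an outer strand where it acts as $\theta_B = \id{}$; equivalently, the iterated coproduct of a Frobenius algebra with trivial twist is invariant under inserting twists on internal lines. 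This gives $T_{\gamma_k}(C(B)_g)=C(B)_g$.

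The main obstacle is the $S_k$ transformations, and this is where modular invariance of $B$ (the $S$-invariance of the product in Definition~\ref{def:modinv-alg}) is essential. Acting by $S_k$ replaces the $k$-th pair of legs by a sum over $j\in I$ weighted by $d_j/D$, with the characteristic "$S$-matrix encirclement" picture linking $i_k$ around the new $j$-loop. The strategy is to recognise that near the $k$-th handle the vector $C(B)_g$ locally looks like the left-hand side of the $S$-invariance equation in Definition~\ref{def:modinv-alg} (with the rest of $\Delta^{(2g)}$ and the other legs playing the role of the fixed object $B$ and the $\Phi$-capped legs), and that $S_k$ applied to it produces exactly the right-hand side, up to the normalisation $D^2/(d_id_j)$ which matches the $d_j/D$ factors from \eqref{eq:MCG-gen-action} together with a $1/d_{i_k}$ hidden in the $\Phi$-normalisation. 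To make this precise I would use $\Delta$-separability of $B$ (guaranteed by Lemma~\ref{lem:nondeg-ssFa}(2), since $B$ is non-degenerate hence $\dim_{\catname C}B\ne 0$ after checking $B$ is simple, or directly because non-degenerate $\Rightarrow$ $\Delta$-separable) to split off a single $B$-line at the $k$-th handle, apply Definition~\ref{def:modinv-alg} to that sub-diagram, and then re-absorb the result back into $\Delta^{(2g)}$ using coassociativity. The cases $g=0$ and the interplay of $S_g$ with the "outermost" structure need to be checked separately but are straightforward: for $g=0$ there are no nontrivial mapping classes and the statement is vacuous, and for general $g$ the diagrammatic manipulations are local to one handle so the presence of the other handles is immaterial.

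I expect the twist generators to be essentially a bookkeeping exercise with twist eigenvalues, while the genuine content — and the step most likely to require care with normalisation constants and the precise graphical form of $\Phi$ and $\Delta^{(2g)}$ — is matching the $S_k$-action against Definition~\ref{def:modinv-alg}.
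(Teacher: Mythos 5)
Your proposal is correct and follows essentially the same route as the paper: check the generators one by one, use $\theta_B=\id{B}$ (via naturality/coassociativity of $\Delta^{(2g)}$) for $T_{\alpha_k}$ and $T_{\gamma_k}$, and use $\Delta$-separability plus the symmetric Frobenius structure to isolate one $B$-strand at the $k$-th handle so that $S_k$-invariance reduces exactly to the $S$-invariance equation of Definition~\ref{def:modinv-alg}. The only slip is in your bookkeeping for $T_{\alpha_k}$: the scalar $\theta_{i_k}$ is the explicit prefactor in \eqref{eq:MCG-gen-action}, not something produced by $\Phi$, but this does not affect the argument since the combined action on the two legs of handle $k$ is just the twist of $B$ on the strand feeding that handle, which is the identity.
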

\begin{proof}
For $g=0$ there is nothing to show. Let thus $g\ge 1$. We need to check that the generators in \eqref{eq:MCG-gen-action} (for the MFC $\doublecat$) leave $C(B)_g$ invariant.
\begin{itemize}
\item[$T_{\alpha_k}$:]
Invariance is immediate from the fact that $B$ has a trivial twist, \ie $\theta_B = \id{B}$.

\item[$T_{\gamma_k}$:] Choose the iterated coproduct $\Delta^{(2g)}$ such that the $2k$'th and $(2k+1)$'th strand form the output of one coproduct, i.e.\ write
$$
    \Delta^{(2g)} = \big(\id{B^{\otimes (2k-1)}} \otimes \Delta \otimes \id{B^{\otimes(2g-2k-1)}}\big) \circ \Delta^{(2g-1)} ~.
$$
Invariance under $T_{\gamma_k}$ now boils down to the observation that $\theta_{B \otimes B} \circ \Delta = \Delta \circ \theta_B = \Delta$.

\item[$S_k$:] Choose the iterated coproduct $\Delta^{(2g)}$ such that the  $(2k -1)$'th and $2k$'th strand form the output of one coproduct. Applying $S_k$ to $C(B)_g$ only affects the $(2k -1)$'th and $2k$'th strand, and there we obtain:
\begin{align*}
&\frac{d_{i_k}d_{j_k}}{D^2}\sum_{m,n \in I} \sum_{\alpha_k} 
    \boxpic{0.7}{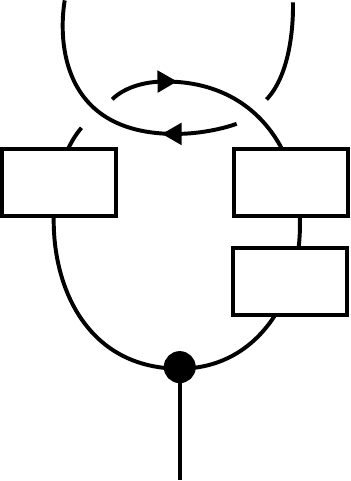}{
        \put (6,59) {\small$\overline\alpha_k$}
        \put (53,59.6) {\small$\alpha_k^\ast$}
        \put (56,37.5) {\small $\Phi$}
        \put (32,-11) {$B$}
        \put (-5,102) {\small$i_k\times j_k$}
        \put (44,102) {\small$i_k^\ast\times j_k^\ast$}
        \put (20,87) {\small $m\times n$}} 
~\overset{(1)}=~ \frac{d_{i_k}d_{j_k}}{D^2}
    \boxpic{0.7}{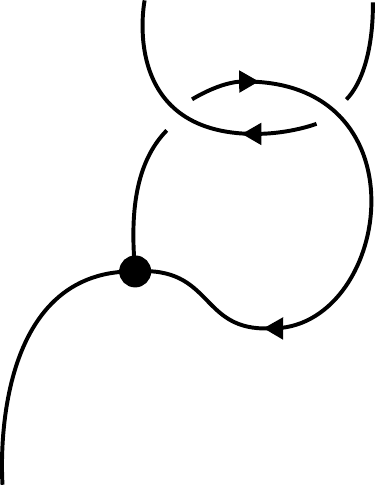}{
        \put (-5,-11) {$B$}
        \put (13,102) {\small$i_k\times j_k$}
        \put (60,102) {\small$i_k^\ast\times j_k^\ast$}}\\[3ex]
&\overset{(2)}=~ 
\sum_{\alpha_k} 
    \boxpic{0.7}{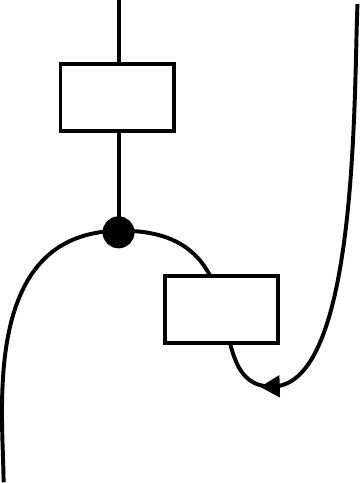}{
            \put (19,77) {\small$\overline\alpha_k$}
            \put (40,34) {\small$\alpha_k$}
            \put (-5,-11) {$B$}
            \put (8,102) {\small$i_k\times j_k$}
            \put (57,102) {\small$i_k^\ast\times j_k^\ast$}} 
~\overset{(3)}=~ 
\sum_{\alpha_k}
        \boxpic{0.7}{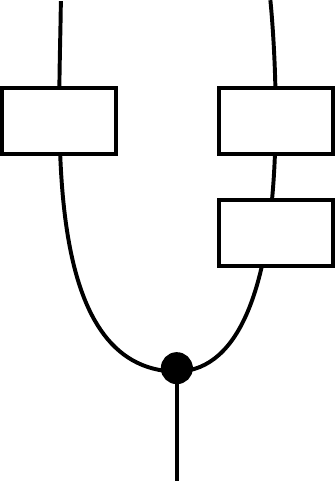}{
            \put (6,72) {\small$\overline\alpha_k$}
            \put (52,72.5) {\small$\alpha_k^\ast$}
            \put (53,47.5) {\small $\Phi$}
            \put (31,-11) {$B$}
            \put (-4,102) {\small$i_k\times j_k$}
            \put (41,102) {\small$i_k^\ast\times j_k^\ast$}}
\end{align*}
For the first expression in this computation, 
recall that we have to evaluate the formula for $S_k$ in \eqref{eq:MCG-gen-action} for $\doublecat$. We take $j \leadsto i_k \times j_k$ and $i_k \leadsto m \times n$ in \eqref{eq:MCG-gen-action}, so that the prefactor there becomes $d_{i_k} d_{j_k}/D^2$. 
In step (1) we carry out the sum over $m,n$ and $\alpha_k$ which gives the identity on $B$, and we use that $B$ is $\Delta$-separable and symmetric to remove $\Phi$.
Step (2) is precisely $S$-invariance of $B$ as in Definition~\ref{def:modinv-alg}.
Step (3) is easier to see backwards, and again uses that $B$ is $\Delta$-separable and symmetric.

This shows that $S_k \circ C(B)_g = C(B)_g$.
\end{itemize}
\end{proof}

\noindent
\begin{minipage}\textwidth
\begin{remark}~
\begin{enumerate}
    \item 
The construction of mapping class group invariants as in \eqref{eq:definvariants} first appeared in the study of consistent systems of correlators for rational 2d conformal field theories via 3d topological quantum field theories \cite{FRS1,Fjelstad:2005ua}. There, $V^{\doublecat}(\Sigma)$ describes the space of holomorphic times antiholomorphic conformal blocks, and a vector $\mathrm{Cor}(\Sigma) \in V^{\doublecat}(\Sigma)$ describes a bulk correlation function on $\Sigma$. To be consistent, the collection $\{ \mathrm{Cor}(\Sigma) \}$ has to satisfy modular invariance and factorisation conditions. Here, we only make use of the former.

\item
The categorical form of the modular invariance condition for algebras first appeared in \cite[Sec.\,6.1]{Kong:2009fp} in the context of vertex operator algebras and has been investigated in detail in \cite{KR1}. The notion of a Cardy algebra from \cite{Kong:2009fp}  was used in \cite{KLR} to classify solutions to the open/closed factorisation and modular invariance conditions. In this context, the algebra $B$ in Proposition~\ref{prop:CB-mod-inv} corresponds to the closed part of a Cardy algebra, and \eqref{eq:definvariants} is the correlator for a closed genus-$g$ surface.

\item
The classification of solutions to the consistency conditions in \cite{KLR} relied on semisimplicty of $\catname C$ and $\doublecat$. A more general approach applicable to non-semisimple modular tensor categories has been developed in \cite{Fuchs:2013lda,Fuchs:2016wjr}. See in particular \cite[Eqn.\,(5.3)]{Fuchs:2013lda} for the generalisation of \eqref{eq:definvariants} and \cite[Def.\,4.9]{Fuchs:2016wjr} for the definition of modular invariant algebras in this non-semisimple setting. These ingredients will be important when trying to generalise the present results to non-semisimple modular tensor categories.
\end{enumerate}
\end{remark}
\end{minipage}

\section{The full centre}\label{sec:full-centre}

In this section, we recall the definition of the full centre of an algebra, as well as a result from \cite{KR} that will be used later for the proof of our main theorem. 

\begin{definition}
The \textit{left centre} $C_l(A)$ of a non-degenerate algebra $A$ is the image of the idempotent $P_l: A \rightarrow A$,
    \begin{equation*}
       P_l~=~ \boxpic{0.7}{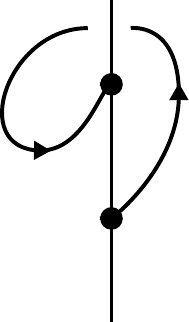}{
            \put (25,-16) {$A$}
            \put (25,104) {$A$}}\quad .
    \end{equation*}
\end{definition}

More details on the definition of left (and right) centres and their properties can be found e.g.\ in \cite[Sec.\,2.4]{Frohlich:2003hm}.

The tensor functor $T: \doublecat \rightarrow \catname C$, $X\times Y \mapsto X\otimes Y$ admits a two-sided adjoint. Explicitly, the adjoint is given by $R: \catname C\rightarrow \doublecat$, $X\mapsto \bigoplus_{i \in I} (X\otimes i^\ast) \times i$, see \cite[Sec.\,2.4]{KR1}.
\begin{definition}\label{def:fullcenter}
Let $A \in \catname C$ be a non-degenerate algebra. The \textit{full centre of $A$} is  $Z(A) = C_l(R(A)) \in \doublecat$. 
\end{definition} 

\begin{remark}
The full centre was first introduced in \cite[Def.\,4.9]{Fjelstad:2006aw}. Actually, one can assign to an algebra $A$ in a monoidal category $\catname M$ a commutative algebra in the Drinfeld centre $\catname{Z(M)}$ which is characterised by a universal property \cite{Davydov:2009}. The notion in Definition~\ref{def:fullcenter} is a special case of this more general characterisation.
\end{remark}

The full centre is important in our construction because it produces modular invariant algebras. The following theorem is the first key input in our construction. It is shown in \cite[Prop.\,2.7]{KR} and \cite[Thm.\,3.18]{KR1}.

\begin{theorem}\label{thm:fullcentre}
Let $A \in \catname C$ be a simple non-degenerate algebra. Then the full centre $Z(A) \in \doublecat$ is a haploid commutative non-degenerate modular invariant algebra with $\dim_{\doublecat} Z(A) = D^2$.
\end{theorem}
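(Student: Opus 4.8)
The plan is to combine the explicit description of $Z(A)=C_l(R(A))$ with the standard identification of $\doublecat=\catname C\boxtimes\catname C^{\mathrm{rev}}$ with the Drinfeld centre $\catname Z(\catname C)$, available because $\catname C$ is modular (hence factorisable), under which $T$ becomes the forgetful functor and its two-sided adjoint $R$ the induction functor. I would first put an algebra structure on $R(A)$. Being a two-sided adjoint of the strong monoidal functor $T$ between fusion categories, $R$ is Frobenius monoidal (indeed separable, since $\catname C$ and $\doublecat$ are semisimple), so it sends the Frobenius algebra $A$ to a Frobenius algebra $R(A)$ in $\doublecat$, with product and coproduct the images of those of $A$ composed with the (co)multiplication data of the Frobenius structure on $R$. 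Here I invoke Lemma~\ref{lem:nondeg-ssFa}: since $A$ is simple, $\dim_{\catname C}(A)\neq 0$, so $A$ is normalised-special symmetric Frobenius, and $R(A)$ inherits a special symmetric Frobenius structure.

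Next I would dispatch the three of the four properties that do not use the braiding in an essential way. Commutativity of $Z(A)=C_l(R(A))$ is a general fact: the left centre of any algebra in a braided category is a commutative subalgebra. That $Z(A)$ is again special symmetric Frobenius — and hence non-degenerate by Remark~\ref{rem:non-deg-vs-Frob} — follows from the fact that the left centre of a special symmetric Frobenius algebra inherits such a structure, the idempotent $P_l$ being (up to the standard normalisation) a morphism of Frobenius algebras. For haploidity I would use the adjunction $\doublecat(\unit\times\unit,R(A))\cong\catname C\big(T(\unit\times\unit),A\big)=\catname C(\unit,A)$ and identify the subspace cut out by $P_l$ with the bimodule endomorphisms $\End_{A|A}(A)$; as $A$ is simple and $\mathbbm k$ is algebraically closed this is $\mathbbm k\,\id{A}$, so $\doublecat(\unit\times\unit,Z(A))=\mathbbm k$.

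For the dimension I would compute the quantum trace of the idempotent. Since $\doublecat$ is spherical and $Z(A)$ is a retract of $R(A)$ through $P_l$, cyclicity of the trace gives $\dim_{\doublecat}Z(A)=\mathrm{tr}_{\doublecat}(P_l)$. Using $\dim_{\doublecat}R(A)=\sum_{i}\dim_{\catname C}(A\otimes i^\ast)\,d_i=\dim_{\catname C}(A)\sum_i d_i^2=\dim_{\catname C}(A)\,D^2$ together with normalised-specialness of $A$, a direct diagrammatic evaluation of $\mathrm{tr}_{\doublecat}(P_l)$ shows that the factor $\dim_{\catname C}(A)$ cancels, leaving $\dim_{\doublecat}Z(A)=D^2$.

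The main obstacle is modular invariance in the sense of Definition~\ref{def:modinv-alg}. The twist condition $\theta_{Z(A)}=\id{Z(A)}$ is comparatively direct: on a simple summand $k\times i$ of $\doublecat$ the twist acts by $\theta_k\theta_i^{-1}$, and the half-braiding defining $Z(A)$ inside $\catname Z(\catname C)$ forces $\theta_k=\theta_i$ on every surviving summand, so the twist is trivial. The genuinely hard part is $S$-invariance of the product, as this is exactly where the non-degeneracy of the braiding of $\catname C$ must enter. I would prove it by rewriting the left-hand side of the identity in Definition~\ref{def:modinv-alg} using the explicit product of $Z(A)$ and the defining relation of the modular $S$-matrix — equivalently, the factorisation expressed through the (co)evaluation of the object $L$ and the equivalence $\doublecat\simeq\catname Z(\catname C)$ — and then collapsing the $S$-sum to a single term via a completeness/dominance argument over simple objects. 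This last step is the technical heart, and it is where I would expect to spend most of the effort.
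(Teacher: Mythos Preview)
The paper does not give its own proof of this theorem: it is quoted from the literature, with the sentence ``It is shown in \cite[Prop.\,2.7]{KR} and \cite[Thm.\,3.18]{KR1}.'' So there is no in-paper argument to compare against; the relevant comparison is to those references.

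Your outline is broadly in line with how the cited papers proceed, and several steps are fine as stated (left centre is commutative; the left centre of a symmetric special Frobenius algebra is again symmetric special Frobenius; the dimension via $\mathrm{tr}_{\doublecat}(P_l)$). Two places deserve tightening. First, the haploidity step: the adjunction gives $\doublecat(\unit\times\unit,R(A))\cong\catname C(\unit,A)$, but you still have to identify the image of $P_l$ on this space. The claim that this image equals $\End_{A|A}(A)$ is not automatic; what one actually shows is that the projector $P_l$ on $\catname C(\unit,A)$ sends $x$ to the ``centralised'' element, and that this image is one-dimensional precisely because $A$ is simple (equivalently, because $\catname C(\unit,A)\to\End_{A|A}(A)$, $x\mapsto \mu\circ(x\otimes\id{A})$, has image in the bimodule centre). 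You should make that identification explicit rather than assert it. Second, the twist argument is loose: saying ``the half-braiding forces $\theta_k=\theta_i$ on every surviving summand'' conflates the $\doublecat$-picture with the $\catname Z(\catname C)$-picture. In $\doublecat$ the twist on a simple $k\times i$ is $\theta_k\,\theta_i^{-1}$, and what one really checks is that on $Z(A)\subset R(A)$ the twist computed via the half-braiding equals the identity; this is a short diagrammatic computation in \cite{KR1}, not a tautology about which summands ``survive''. Finally, you are right that $S$-invariance is the substantive part; your sketch (``collapse the $S$-sum via completeness/dominance'') is the correct shape of the argument in \cite[Thm.\,3.18]{KR1}, but as written it is a plan rather than a proof.
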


\begin{example}\label{ex:Z(1)-const}
The fundamental example is to choose $A=\unit \in \catname C$.
We describe the Frobenius algebra structure of $Z(\unit)$ explicitly, as we will need it later.
The expressions below are taken from \cite[Eq.\,(2.58)]{KR1}, which gives $R(A)$, together with the observation that for $A = \unit$ it is already commutative, and so equal to $Z(\unit)$.
The underlying object of $Z(\unit)$ is $\bigoplus_{i\in I} i^\ast\times i$. The unit is given by the natural embedding of $\unit \times \unit$, while the counit is given by the projection
to $\unit \times \unit$ times $D^2$. Let $\{\alpha\}$ be a basis of $\catname C(i \otimes j, k)$ and $\{\overline\alpha\}$ the dual basis in $\catname C(k,i \otimes j)$ in the sense that $\alpha \circ \overline\beta = \delta_{\alpha,\beta}$.
The product and coproduct are given by  
\begin{align}
\mu_{Z(\unit)}~&=~ \bigoplus_{i,j,k} \sum_{\alpha= 1}^{N_{ij}^{~k}} \boxpic{0.7}{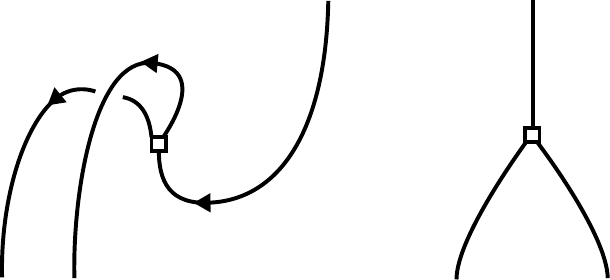}{
			 		\put (12, -5) {$ j^\ast $}
			 		\put (2, -5) {$ i^\ast $}
			 		\put (51, 47) {$ k^\ast $}
			 		\put (76, -5) {$ i $}
			 		\put (100, -5) {$ j $}
			 		\put (85, 47) {$ k $}
			 		\put (30, 20) {$\overline \alpha $}
			 		\put (92, 22) {$\alpha$}
			 		\put (60, 15) {$\otimes_\mathbbm{k}$}}
\rule[-3.2em]{0em}{7em}			 		
\quad ,
\nonumber\\
\Delta_{Z(\unit)}~&=~ \bigoplus_{i,j,k} \sum_{\alpha= 1}^{N_{ij}^{~k}} \frac{d_i d_j}{d_k \,D^2}\boxpic{0.7}{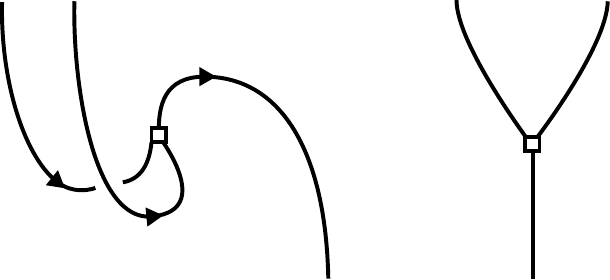}{
			 		\put (14, 47) {$ j^\ast $}
			 		\put (0, 47) {$ i^\ast $}
			 		\put (54.5, -5) {$ k^\ast $}
			 		\put (75, 47) {$ i $}
			 		\put (101, 47) {$ j $}
			 		\put (88, -5) {$ k $}
			 		\put (30, 21) {$\alpha $}
			 		\put (92, 20) {$\overline \alpha $}
			 		\put (60, 15) {$\otimes_\mathbbm{k}$}}
\rule[-3em]{0em}{6.5em}			 		
			 		\quad .
\label{eq:Z1-product-coproduct}
\end{align}
\end{example}

We briefly recall the notions of bimodules and Morita equivalence. 
Let $ A$ and $B$ be two algebras. An $A$-$B$-\textit{bimodule} $ T $ carries a left $ A $-action $ A\otimes T\rightarrow T $ 
and a right $ B $-action $ T\otimes B\rightarrow T$ 
which commute with each other. Two algebras $A$ and $B$ are called \textit{Morita equivalent}, if there exist an $A\mbox{-}B$-bimodule $X$ and a $B\mbox{-}A$-bimodule $Y$ such that $X\otimes_B Y \cong A$ and $Y\otimes_A X \cong B$ as bimodules. 
			
The next theorem is the second key input for our construction, as it relates Morita equivalence to isomorphisms of full centres.

\bigskip

\noindent
\begin{minipage}{\textwidth}
\begin{theorem}[{\cite[Thm.\,1.1]{KR}}]\label{thm:moritaiso}
Let $A$ and $B$ be simple non-degenerate algebras. Then the following are equivalent: 
\begin{enumerate}
    \item $A$ and $B$ are Morita equivalent.
    \item $Z(A)$ and $Z(B)$ are isomorphic as algebras.
\end{enumerate}
\end{theorem}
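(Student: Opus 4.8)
The plan is to realise both conditions as the coincidence of a single point in one bijective correspondence: on one side, Morita classes of simple non-degenerate algebras in $\catname C$; on the other, isomorphism classes of Lagrangian algebras in the Drinfeld centre $\mathcal Z(\catname C)$; with $A$ sent to $Z(A)$. Granting such a correspondence the theorem is immediate, since~(1) says $A$ and $B$ give the same point on the left and~(2) says $Z(A)$ and $Z(B)$ give the same point on the right. First I would set up the bridge. Since $\catname C$ is modular there is a braided equivalence $\doublecat=\catname C\boxtimes\catname C^\mathrm{rev}\simeq\mathcal Z(\catname C)$, so $Z(A)$ may be viewed as an algebra in $\mathcal Z(\catname C)$; by Theorem~\ref{thm:fullcentre} it is haploid, commutative and non-degenerate with $\dim_{\doublecat}Z(A)=D^2$, and since the global dimension of $\doublecat$ equals $D^4=(\dim_{\doublecat}Z(A))^2$ this makes $Z(A)$ a Lagrangian algebra (connected étale of maximal dimension). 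On the other side, Morita classes of simple non-degenerate algebras in $\catname C$ are in bijection with equivalence classes of indecomposable module categories over $\catname C$, via $\catname M\mapsto\underline{\End}_{\catname C}(M)$ for a suitable generator $M\in\catname M$, with $A$ and $B$ Morita equivalent iff $\catname C_A\simeq\catname C_B$ as module categories (Ostrik, see \cite{EGNO}); one checks the internal End is again simple and non-degenerate.

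For $(1)\Rightarrow(2)$ -- Morita invariance of the full centre -- I would exhibit $Z(A)$ as depending, up to algebra isomorphism, only on the module category $\catname C_A$. The clean route is Davydov's universal property \cite{Davydov:2009}: $Z(A)$ is terminal among commutative algebras $C$ in $\mathcal Z(\catname C)$ equipped with an algebra map $U(C)\to A$ in $\catname C$ that centralises $A$ via the half-braiding; equivalently one can write $Z(A)$ as an internal-End object in $\mathcal Z(\catname C)$ assembled from the action of $\catname C$ on $\catname C_A$, which manifestly only sees $\catname C_A$. Either way a module equivalence $\catname C_A\simeq\catname C_B$ (built from the Morita bimodules $X,Y$) transports canonically to an \emph{algebra} isomorphism $Z(A)\cong Z(B)$.

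For the converse $(2)\Rightarrow(1)$ I would use reconstruction: Lagrangian algebras in $\mathcal Z(\catname C)$ are in bijection with indecomposable $\catname C$-module categories -- e.g.\ $L\mapsto$ the category of $L$-modules in $\mathcal Z(\catname C)$, made a $\catname C$-module category through one of the coordinate embeddings $\catname C\to\mathcal Z(\catname C)$ -- and the composite with the previous bijection recovers precisely $[A]\mapsto Z(A)$ (see \cite{KR,ENO}). Hence an algebra isomorphism $Z(A)\cong Z(B)$ forces $\catname C_A\simeq\catname C_B$, and therefore $A$ and $B$ are Morita equivalent. Alternatively, following \cite{KR} one can build the Morita bimodules $X,Y$ directly out of an isomorphism of full centres using the Reshetikhin--Turaev TQFT; this is more computational but sidesteps the Lagrangian-algebra dictionary.

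The main obstacle is the reconstruction direction $(2)\Rightarrow(1)$: proving that the full centre is a \emph{complete} Morita invariant, and that an \emph{algebra} isomorphism already suffices -- note this is strictly stronger than an isomorphism of underlying objects, which is all that is available in the related result of \cite{Andersen:2008}. There is no formal reason that bimodules $X,Y$ should exist given only the datum $Z(A)\cong Z(B)$; producing them, equivalently showing that $A\mapsto Z(A)$ is injective and surjective onto Lagrangian algebras, is where semisimplicity of $\catname C$ and the full structure supplied by Theorem~\ref{thm:fullcentre} (haploidity, $\Delta$-separability, the dimension constraint) are genuinely used. By comparison $(1)\Rightarrow(2)$ is essentially formal once the universal property of the full centre is in hand.
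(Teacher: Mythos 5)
The paper does not prove this statement at all: Theorem~\ref{thm:moritaiso} is imported verbatim from \cite[Thm.\,1.1]{KR}, so the only comparison available is with the proof given there. Your route is genuinely different from that one. The original argument in \cite{KR} works directly with the algebras: Morita invariance of the full centre is shown by explicit string-diagram manipulation of the bimodules $X,Y$, and for $(2)\Rightarrow(1)$ an algebra isomorphism $Z(A)\cong Z(B)$ is converted, via a TQFT/graphical construction, into a nonzero element of a space of $A$-$B$-bimodule intertwiners from which an invertible bimodule is extracted. Your proposal instead routes everything through the dictionary \{Morita classes\} $\leftrightarrow$ \{indecomposable module categories\} $\leftrightarrow$ \{Lagrangian algebras in $\mathcal Z(\catname C)$\}; the identification of $Z(A)$ as a Lagrangian algebra (connected \'etale with $(\dim Z(A))^2=D^4=\dim\doublecat$) and the Morita invariance via the internal-End/universal-property description are both correct and buy a cleaner, more structural picture. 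What the approach of \cite{KR} buys is self-containedness: it produces the bimodules $X,Y$ by hand. The one place where your sketch is thinner than it looks is exactly the step you flag as the main obstacle: asserting that the composite of the two bijections ``recovers precisely $[A]\mapsto Z(A)$'' is not an argument but a restatement of the injectivity of the full-centre map on Morita classes, which is the entire content of $(2)\Rightarrow(1)$. To close it you must either invoke \cite[Prop.\,4.8]{DMNO} / \cite[Thm.\,3.22]{KR1} together with a verification that the two constructions are mutually inverse, or fall back on the direct bimodule construction of \cite{KR} -- and note that the results you would cite postdate and partly rely on \cite{KR} itself. As a proof sketch standing in for a cited theorem this is acceptable, but it should not be mistaken for an independent proof of the hard direction.
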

\end{minipage}

\section{Main theorem}\label{sec:mainthm}

We have now gathered the ingredients we need to state and prove our main theorem:

\begin{theorem}
\label{thm:maintheorem}
Let $\catname C$ be a MFC such that the projective mapping class group representations $V_g^{\catname C}$ are irreducible for all $g \ge 0$. Then $\catname C$ has a unique Morita class of simple non-degenerate algebras, namely the Morita class of the tensor unit $\unit$.
\end{theorem}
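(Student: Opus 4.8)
The plan is to reduce everything, via Theorem~\ref{thm:moritaiso}, to proving that for every simple non-degenerate algebra $A \in \catname C$ the full centres satisfy $Z(A) \cong Z(\unit)$ \emph{as algebras} in $\doublecat$. By Theorem~\ref{thm:fullcentre} and Example~\ref{ex:Z(1)-const}, both $Z(A)$ and $Z(\unit)$ are haploid commutative non-degenerate modular invariant algebras of quantum dimension $D^2$; by Lemma~\ref{lem:nondeg-ssFa} the associated Frobenius structure of each is $\Delta$-separable and normalised-special and is determined by the product together with the unit, and the unit is unique up to scalar by haploidness. So it is enough to exhibit an isomorphism of the underlying objects that intertwines units and products.

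First I would feed both algebras into the construction of Section~\ref{sec:invariants}: by Proposition~\ref{prop:CB-mod-inv} the vectors $C(Z(A))_g$ and $C(Z(\unit))_g$ are $\Mod_g$-invariant in $V^{\doublecat}(\Sigma_g)$ for all $g \ge 0$, and by Lemma~\ref{lem:doubletheory} together with Corollary~\ref{cor:mcginvariants} the hypothesis that $V_g^{\catname C}$ is irreducible forces $\dim V^{\doublecat}(\Sigma_g)^{\Mod_g} = 1$. Hence $C(Z(A))_g = c_g\, C(Z(\unit))_g$ for scalars $c_g \in \mathbbm{k}$; the $g=0$ term gives $c_0 = 1$, and since $C(B)_g$ has a nonzero component on the all-$\unit$ multi-index (by haploidness of $B$) one gets $c_g \in \mathbbm{k}^\times$ for all $g$. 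Reading the $g=1$ identity off component-by-component, the $(i,j)$-component of $C(B)_1$ is a nonzero multiple of the multiplicity $\dim_{\mathbbm{k}}\doublecat(i \times j, B)$, so these multiplicities agree for $Z(A)$ and $Z(\unit)$ and therefore $Z(A) \cong Z(\unit)$ as objects, namely both equal $O := \bigoplus_{i \in I} i^\ast \times i$. (At this point one has re-derived the object-level statement of \cite{Andersen:2008}.)

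The core of the argument is to upgrade this to an algebra isomorphism, and this is the step I expect to be hardest. Transport the algebra structure of $Z(A)$ along a chosen object isomorphism to $O$, normalised so that its unit coincides with the unit of $Z(\unit)$ from Example~\ref{ex:Z(1)-const}; write $\mu'$ for the resulting product. Since $\End_{\doublecat}(O) \cong \bigoplus_{i \in I} \mathbbm{k}$ is commutative, $\Aut_{\doublecat}(O)$ consists of the diagonal rescalings $i^\ast \times i \mapsto \lambda_i\,(i^\ast \times i)$, and an algebra isomorphism $(O,\mu') \to (O,\mu_{Z(\unit)})$ fixing the unit exists iff $\mu'$ and $\mu_{Z(\unit)}$ agree, component by component, up to such a rescaling with $\lambda_\unit = 1$. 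The claim I would prove is that the ``difference'' between $\mu'$ and $\mu_{Z(\unit)}$ is encoded by a single function $\sigma$ on $U(\catname C) \times U(\catname C)$, where $U(\catname C)$ is the universal grading group of Section~\ref{sec:universalgroup}: a nonzero component $(i^\ast\times i)\otimes(j^\ast\times j)\to k^\ast\times k$ forces the degrees to add, and moreover that component of $\mu'$ is forced to equal $\sigma(\partial i,\partial j)$ times the corresponding component of $\mu_{Z(\unit)}$, with $\sigma$ depending only on the degrees; then commutativity and associativity of $\mu'$ make $\sigma$ a \emph{symmetric $2$-cocycle} on $U(\catname C)$ with values in $\mathbbm{k}^\times$. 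This is the part that genuinely uses the irreducibility input: one processes the proportionalities $C(Z(A))_g = c_g\, C(Z(\unit))_g$ inductively along the upper central series $\catname C \supseteq \catname{C}_{\mathrm{ad}} \supseteq (\catname{C}_{\mathrm{ad}})_{\mathrm{ad}} \supseteq \cdots$ of length $N$, each layer of which requires only boundedly many genera -- giving the bound $g \le 3N+2$ advertised in the introduction -- using the explicit formulas for $\mu_{Z(\unit)}$, $\Delta_{Z(\unit)}$ in Example~\ref{ex:Z(1)-const} and for $C(B)_g$ in \eqref{eq:definvariants}.

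Granting this, the conclusion is immediate. Since $\mathbbm{k}$ is algebraically closed of characteristic zero, $\mathbbm{k}^\times$ is a divisible abelian group, hence an injective $\mathbb{Z}$-module, so $\mathrm{Ext}^1_{\mathbb{Z}}(U(\catname C),\mathbbm{k}^\times) = 0$; as symmetric $2$-cocycles on the finite abelian group $U(\catname C)$ with values in $\mathbbm{k}^\times$ modulo coboundaries are classified by this Ext-group, $\sigma$ is a coboundary, $\sigma(\partial i,\partial j) = \lambda_i \lambda_j/\lambda_k$ for some $\lambda\colon U(\catname C) \to \mathbbm{k}^\times$ with $\lambda_\unit = 1$. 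The corresponding diagonal automorphism of $O$ then intertwines $\mu'$ with $\mu_{Z(\unit)}$, so $Z(A) \cong Z(\unit)$ as algebras, and Theorem~\ref{thm:moritaiso} yields that $A$ is Morita equivalent to $\unit$.
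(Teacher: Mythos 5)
Your overall architecture coincides with the paper's: reduce via Theorem~\ref{thm:moritaiso} to showing $Z(A)\cong Z(\unit)$ as algebras, get the object-level isomorphism from irreducibility at genus $1$ (your component-by-component reading of $C(B)_1$ is a legitimate variant of the paper's argument via the commutant of $S$ and $T$), encode the remaining discrepancy as a symmetric normalised $2$-cocycle on the universal grading group, and kill it using $\mathrm{Ext}(G,\mathbbm{k}^\times)=0$. The endpoints and the cohomological endgame are right.

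The genuine gap is the middle step, which you yourself flag as the hardest and then only assert: the claim that each component of $\mu'$ equals $\sigma(\partial i,\partial j)$ times the corresponding component of $\mu_{Z(\unit)}$ with $\sigma$ depending only on degrees. Two separate things must be proved here. First, when $N_{ij}^{~k}>1$ the relevant component of $\mu'$ is an $N_{ij}^{~k}\times N_{ij}^{~k}$ matrix in the multiplicity space, and one must show it is a scalar multiple of the identity; the paper does this in Lemma~\ref{lem:diagonal} by writing out $C(Z(A))_2$ explicitly and comparing it with $C(Z(\unit))_2$ inside the one-dimensional space of genus-$2$ invariants. Second, the resulting scalars $\lambda^{ij}_k$ depend a priori on $i,j,k$ individually; reducing them, after a sequence of diagonal gauge transformations $f_i$, to a function on $G\times G$ is the bulk of the paper's work (Lemmata~\ref{lem:lambdaprop}--\ref{lem:lamgrading}), based on multiplicative identities $\lambda(\Omega)=1$ for $(\unit;i_1,\ibar_1,\dots,i_g,\ibar_g)$-fusion trees extracted from irreducibility at genus $g\le 3N+2$: first trivialise the constants on $I_{ad}$, then on products of $I_{ad}$ with each component $I_g$, and only then deduce dependence on degrees alone. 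Your one-sentence sketch of this step also points at the wrong filtration: the paper's $N$ is the length of the filtration $R_{ad}^{(0)}\subset R_{ad}^{(1)}\subset\cdots\subset R_{ad}^{(N)}=R_{ad}$ of the adjoint subring itself (by the number of factors $b_m b_{\overline m}$ needed to reach a given basis element), not the nilpotency series $\catname C\supseteq\catname C_{\mathrm{ad}}\supseteq(\catname C_{\mathrm{ad}})_{\mathrm{ad}}\supseteq\cdots$. Finally, note that commutativity and associativity of $\mu'$ yield the symmetric-cocycle property only \emph{after} one knows $\sigma$ is well defined on $G\times G$; they do nothing towards establishing that well-definedness. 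So the skeleton matches the paper, but the load-bearing technical content is asserted rather than proved.
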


The proof is contained in Sections~\ref{sec:structureconstants} and \ref{sec:isosequence}.

\medskip

In \cite[Thm.\,1]{Andersen:2008} the following closely related statement is shown: 
\begin{quote}
Let $A \in \catname C$ be a simple non-degenerate algebra such that $Z(A)$ is not isomorphic to $Z(\unit)$ as an object in $\doublecat$. Then all projective mapping class group representations $V_g^{\catname C}$, $g \ge 1$ are reducible.
\end{quote}
In contrapositive form this reads: Suppose there is a $g \ge 1$ such that $V_g^{\catname C}$ is irreducible. Then for every simple non-degenerate algebra $A$ one has that $Z(A)$ is isomorphic to $Z(\unit)$ as an object in $\doublecat$.

From this point of view, on the one hand, Theorem~\ref{thm:maintheorem} needs the stronger assumption that $V_g^{\catname C}$ is irreducible for all $g \ge 0$ (however, see Remark~\ref{rem:maintheorem}\,(1) below). On the other hand, under these assumptions it gives a stronger result, namely together with Theorem~\ref{thm:moritaiso} it follows that $Z(A) \cong Z(\unit)$ as algebras in $\doublecat$, and not just as objects. This confirms an expectation formulated in \cite[Rem.\,1]{Andersen:2008}, at least under our stronger assumptions. Our method to prove Theorem~\ref{thm:maintheorem} is different from that used in \cite{Andersen:2008}, and thus may be of independent interest.

We note that it is not at all obvious that there are examples where $Z(A) \cong Z(\unit)$ as objects but not as algebras. Such examples were first provided in \cite{Davydov:2014}.\footnote{In these examples it is not required that all simple non-degenerate algebras have $Z(A) \cong Z(\unit)$ as objects. Thus these examples do not yet imply that the conclusion of Theorem~\ref{thm:maintheorem} is indeed stronger than that of \cite[Thm.\,1]{Andersen:2008}.} In fact, that paper provides examples of Lagrangian algebras, but each such algebra can be realised as a full centre by \cite[Thm.\,3.22]{KR1} (see also \cite[Prop.\,4.8]{DMNO} for a more general statement).

\begin{remark}\label{rem:maintheorem}~
\begin{enumerate}
    \item In the proof of Theorem~\ref{thm:maintheorem} we actually need irreducibility of the representations $V_g^{\catname C}$ only for $1 \le g\le 3N+2$, where $N$ is a bound introduced in Section~\ref{sec:universalgroup} in terms of the adjoint subring. 
    The place in the proof where this maximal $g$ occurs is pointed out in Remark~\ref{rem:maximal-g}.
    The constant $N$ in turn is trivially bounded by the number of isomorphism classes of simple objects, $N\le |I|$. In other words, one can relax the hypothesis of Theorem~\ref{thm:maintheorem} to assume irreducibility only for $V_g^{\catname C}$ with $1 \le g \le 3N+2$.
    
    \item 
In this paper we exclude surfaces with marked points. Nonetheless, let us for the moment consider the surface $\Sigma_{0,3}$, \ie the sphere with three punctures, and assume that the punctures are labelled by simple objects, say $i,j,k \in I$. The (framed, pure) 
mapping class group $\Mod(\Sigma_{0,3})$ acts on $V^{\catname C}(\Sigma_{0,3})$ by rotation of the framing at the marked points, and so by a scalar given by the corresponding twist eigenvalue. If $V^{\catname C}(\Sigma_{0,3})$ is non-zero, for $\Mod(\Sigma_{0,3})$ to act irreducibly we must hence have $\dim V^{\catname C}(\Sigma_{0,3})=1$.

On the other hand, $V^{\catname C}(\Sigma_{0,3}) = \catname C(\unit, i \otimes j \otimes k)$. Thus, requiring irreducibility of the mapping class group action also on surfaces with marked points implies in particular that the fusion coefficients of $\catname C$ must satisfy $N_{ij}^{~\kbar} \in \{0,1\}$. Considering only surfaces without punctures, as we do, does not a priori impose this requirement, but we do not know any example where the $V^{\catname C}_g$, $g\ge 0$ are irreducible but $N_{ij}^{~\kbar}>1$ can occur.
\end{enumerate}
\end{remark}

\begin{example} \label{ex:irred-or-not}
The only examples with irreducible $V^{\catname C}_g$'s we are aware of are Ising-type categories and the MFC $\catname C(sl(2),k)$ associated to the affine Lie algebra $\widehat{sl}(2)$ at certain levels $k \in \mathbb{Z}_{>0}$.
Let us list these examples, as well as some non-examples. (In all these examples it was already known that there is a unique Morita-class of simple non-degenerate algebras.)
\begin{enumerate}
\item 
It is shown in \cite{Roberts:1999} that for $\catname C = \catname C(sl(2),k)$ and $r = k+2$ prime,
all projective representations $V^{\catname C}_g$, $g \ge 0$ are irreducible.

Most of the remaining cases can be excluded already by looking at $g=1$. Namely by \cite[App.\,A]{Gepner:1986hr} and \cite[Prop.\,1]{Cappelli:1987xt}, invariants in the representation $V^{\doublecat}_{g=1}$ are obtained from divisors $d$ of $r$, with divisors $d$ and $r/d$ describing the same invariant subspace, and where $d$ with $d^2=r$ is excluded.
Thus, when $r \ge 3$ is not a prime or a square of a prime,
the space of invariants satisfies
$\dim (V^{\doublecat}_{1})^{\Mod_{1}}>1$, and so by Corollary~\ref{cor:mcginvariants}, $V^{\catname C}_{1}$ is not irreducible. 

    On the other hand, for $k=2$ ($r = 4$), one obtains a category of Ising-type, for which all $V^{\catname C}_{g}$ are irreducible, see point 2. 
    Some results on the irreducibility of $V^{\catname C}_{g \ge 2}$ for the remaining cases of $r = p^2$ with $p>2$ prime can be found in \cite{Korinman}.

\item 
The Ising model without marked points is studied in \cite{Castro:2011zq,Jian:2019ubz}. Irreducibility of all $V^{\catname C}_g$, $g \ge 0$ is shown in \cite[Sec.\,4.3]{Jian:2019ubz}. In \cite{RR-prep} we will extend this result to all 16 Ising-type MFCs. 

\item 
Let $\catname C = \catname C(sl(N),k)$ be the MFC for the affine Lie algebra $\widehat{sl}(N)$ at any
level $k \in \mathbb{Z}_{>0}$, for $N \ge 3$. It is shown in \cite[Thm.\,3.6]{Andersen:2009} that the $V^{\catname C}_g$ are reducible for each $g \ge 1$. 

\item 
For $\catname C(sl(2),k)$, irreducibility has also been studied for the mapping class group of surfaces with marked points, see~\cite{KoSa,KuMi}. For Ising-type MFCs, irreducibility in the presence of marked points will be shown in \cite{RR-prep}.
\end{enumerate}
\end{example}

Theorem~\ref{thm:maintheorem} can be reformulated using module categories. Namely, a \textit{$\catname C$-module category} is a category $\catname M$ together with a functor $\catname C \times \catname M \to \catname M$ and coherence isomorphisms, subject to associativity and unit conditions. A module category is \textit{indecomposable} if it is not equivalent, as a $\catname C$-module category, to a direct sum of non-trivial module categories. 
It is shown in \cite[Sec.\,3.3]{Os} that there is a one-to-one correspondence between Morita-classes of simple algebras in $\catname C$ and semisimple indecomposable $\catname C$-module categories (given by passing from an algebra $A$ to the category of right $A$-modules in $\catname C$). 

In order to have a correspondence with Frobenius algebras, one needs to equip the module categories with a \textit{module trace} \cite{Schau}, \ie a family of maps $\{\Theta_M\}_{M \in \catname M}$ with $\Theta_M:\End(M) \to \mathbbm{k}$, subject compatibility conditions with the pivotal structure of $\catname C$, see \cite[Sec.\,3.2]{Schau}.
From \cite[Thm.\,6.6, Prop.\,6.8]{Schau} we get the following reformulation of Theorem~\ref{thm:maintheorem}:

\newtheorem{otherthm}{Theorem}
\numberwithin{otherthm}{section}

\begin{otherthm}[v2]
\label{thm:otherversion}
Let $\catname C$ be a MFC such that the projective mapping class group representations $V_g^{\catname C}$ are irreducible for all $g \ge 0$. Then there is up to equivalence a unique semisimple indecomposable $\catname C$-module category with module trace, namely $\catname C$ itself.
\end{otherthm}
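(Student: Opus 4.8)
The plan is to read off Theorem~\ref{thm:otherversion} from Theorem~\ref{thm:maintheorem} via the standard dictionary between algebras and module categories, in two steps. First, by \cite[Sec.\,3.3]{Os}, the assignment $A \mapsto \catname C_A$, where $\catname C_A$ denotes the category of right $A$-modules in $\catname C$, gives a bijection between Morita classes of simple algebras in $\catname C$ and equivalence classes of semisimple indecomposable $\catname C$-module categories, under which the Morita class of $\unit$ corresponds to $\catname C$ itself (since $\catname C_{\unit} \simeq \catname C$). Second, by \cite[Thm.\,6.6, Prop.\,6.8]{Schau}, a module trace on $\catname C_A$ is the same data as a special symmetric Frobenius structure on $A$; combined with Lemma~\ref{lem:nondeg-ssFa} and Remark~\ref{rem:non-deg-vs-Frob}, this identifies the semisimple indecomposable $\catname C$-module categories admitting a module trace with the categories $\catname C_A$ for $A$ ranging over \emph{simple non-degenerate} algebras, compatibly with Morita equivalence on one side and module-category equivalence on the other.

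Granting this dictionary, the proof is immediate. The module category $\catname C \simeq \catname C_{\unit}$ carries a module trace because $\unit$ has its canonical special symmetric Frobenius structure, so the collection in question is non-empty. Conversely, if $\catname M$ is a semisimple indecomposable $\catname C$-module category with a module trace, then $\catname M \simeq \catname C_A$ for some simple non-degenerate algebra $A \in \catname C$. By Theorem~\ref{thm:maintheorem}, $A$ is Morita-equivalent to $\unit$, hence $\catname M \simeq \catname C_A \simeq \catname C_{\unit} \simeq \catname C$ as $\catname C$-module categories. Thus $\catname C$ is, up to equivalence, the unique such module category.

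The only step requiring real attention, and which I would verify against \cite{Schau} rather than reprove here, is the precise content of \cite[Thm.\,6.6, Prop.\,6.8]{Schau}: that the correspondence between semisimple module categories with module trace and (special symmetric Frobenius, equivalently non-degenerate) algebras is set up so that \emph{equivalences of module categories with module trace translate into ordinary Morita equivalences of the underlying algebras}, and that every simple non-degenerate algebra actually arises this way. Once this is in hand there is nothing left to do: the ``hard'' content has already been packaged into Theorem~\ref{thm:maintheorem}, and Theorem~\ref{thm:otherversion} is a purely formal restatement.
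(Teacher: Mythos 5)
Your proposal is correct and follows essentially the same route as the paper, which likewise obtains Theorem~\ref{thm:otherversion} as a direct reformulation of Theorem~\ref{thm:maintheorem} via the Ostrik correspondence between Morita classes of simple algebras and semisimple indecomposable module categories, combined with \cite[Thm.\,6.6, Prop.\,6.8]{Schau} to match module traces with (special symmetric Frobenius, i.e.\ non-degenerate) algebra structures. The paper gives no more detail than you do, so there is nothing to add.
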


As an application of this point of view, let us explain how under certain conditions the non-degeneracy of a simple algebra is implied. The MFC $\catname C$ is called \textit{pseudo-unitary} if $\mathbbm{k}=\mathbb{C}$ and if the quantum dimensions of all simple objects are positive. 
By \cite[Prop.\,5.8]{Schau}, for pseudo-unitary $\catname C$, a semisimple $\catname C$-module category can be equipped with a module trace. Hence in this situation we can drop the existence of a module trace from Theorem~\ref{thm:otherversion}\,(v2).
We obtain the following corollary to Theorem~\ref{thm:maintheorem} (see also \cite[Cor.\,6.11]{Schau}):

\begin{corollary}\label{cor:main-pseudounitary}
Suppose that in addition to the hypotheses in Theorem~\ref{thm:maintheorem}, $\catname C$ is pseudo-unitary. Then all simple algebras in $\catname C$ are Morita-equivalent to the tensor unit.
\end{corollary}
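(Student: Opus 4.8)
The plan is to deduce the corollary directly from Theorem~\ref{thm:maintheorem} by checking that under pseudo-unitarity every simple algebra in $\catname C$ is automatically non-degenerate, so that the ``non-degenerate'' qualifier in the main theorem can be dropped. First I would recall from \cite[Sec.\,3.3]{Os} that a simple algebra $A \in \catname C$ gives rise to the semisimple indecomposable $\catname C$-module category $\catname M_A$ of right $A$-modules in $\catname C$, and that every semisimple indecomposable $\catname C$-module category arises this way; moreover Morita-equivalent algebras give equivalent module categories. Next I would invoke \cite[Prop.\,5.8]{Schau}: since $\catname C$ is pseudo-unitary, $\catname M_A$ can be equipped with a module trace $\{\Theta_M\}_{M \in \catname M_A}$. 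The key structural input is then \cite[Thm.\,6.6, Prop.\,6.8]{Schau}, which establishes a correspondence between semisimple indecomposable $\catname C$-module categories \emph{with} module trace and Morita classes of simple \emph{non-degenerate} (equivalently, special symmetric Frobenius) algebras: concretely, a module trace on $\catname M_A$ endows $A$ (possibly after rescaling, or passing to a Morita-equivalent representative) with the structure of a special symmetric Frobenius algebra, hence makes it non-degenerate in the sense of Section~\ref{sec:invariants}. Thus every simple algebra is Morita-equivalent to a simple non-degenerate one.

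With this in hand the argument closes quickly: given an arbitrary simple algebra $A \in \catname C$, the associated module category $\catname M_A$ is semisimple and indecomposable, it admits a module trace by pseudo-unitarity, and therefore $A$ is Morita-equivalent to a simple non-degenerate algebra $A'$. By Theorem~\ref{thm:maintheorem}, $A'$ is Morita-equivalent to the tensor unit $\unit$, and Morita equivalence is transitive, so $A$ is Morita-equivalent to $\unit$. Equivalently, one may phrase the whole argument through Theorem~\ref{thm:otherversion}\,(v2): the hypotheses of the main theorem force the unique semisimple indecomposable $\catname C$-module category with module trace to be $\catname C$ itself, and pseudo-unitarity removes the ``with module trace'' restriction via \cite[Prop.\,5.8]{Schau}, so $\catname C$ is the unique semisimple indecomposable $\catname C$-module category, which under the Ostrik correspondence means every simple algebra lies in the Morita class of $\unit$.

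I expect the only genuine subtlety — and hence the main thing to get right rather than a hard obstacle — to be the precise bookkeeping in how a module trace upgrades a bare simple algebra to a special symmetric Frobenius algebra within its Morita class: one must be careful that the non-degeneracy is obtained for a representative of the Morita class and that this is exactly the data fed into Theorem~\ref{thm:maintheorem}. This is handled cleanly by \cite[Thm.\,6.6, Prop.\,6.8]{Schau} together with Remark~\ref{rem:non-deg-vs-Frob}, which identifies non-degenerate algebras with $\Delta$-separable symmetric Frobenius algebras, so no further work beyond citing these results is needed.
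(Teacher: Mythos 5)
Your proposal is correct and follows essentially the same route as the paper: the paper also passes through Ostrik's correspondence between simple algebras and semisimple indecomposable module categories, uses \cite[Thm.\,6.6, Prop.\,6.8]{Schau} to reformulate the main theorem as Theorem~\ref{thm:otherversion}\,(v2) for module categories with module trace, and then invokes \cite[Prop.\,5.8]{Schau} to supply a module trace from pseudo-unitarity, so that the non-degeneracy hypothesis can be dropped. Your closing remarks on the bookkeeping (non-degeneracy being obtained for a Morita representative, and the identification of non-degenerate with $\Delta$-separable symmetric Frobenius algebras) match the paper's implicit reasoning.
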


\medskip

Before going into the details, let us briefly sketch the proof of 
Theorem~\ref{thm:maintheorem}. By Theorem~\ref{thm:moritaiso} it suffices to show that for any simple non-degenerate algebra $A$ we have $Z(A)\cong Z(\unit)$ as algebras. To obtain this isomorphism we proceed in several steps:
\begin{enumerate}
    \item In Section \ref{sec:universalgroup} we will review the notion of the adjoint subring and universal grading group as well as the bound $N$ mentioned in Remark~\ref{rem:maintheorem}.

    \item In Section \ref{sec:structureconstants}, we will use irreducibility on the torus and obtain multiplication constants $(\lambda^k_{ij})^\alpha_\beta$ relating the structure morphisms of $Z(A)$ to those of $Z(\unit)$. Furthermore, we use irreducibility for genus~2 to obtain constants $\lambda^k_{ij}$ independent of the multiplicity labels $\alpha, \beta$. We then use irreducibility for $g>2$ to obtain constraints on the $\lambda^k_{ij}$.

    \item In Section \ref{sec:isosequence} we construct a sequence of algebra isomorphisms using the results of the previous step and the universal grading group to arrive to an algebra isomorphism $Z(A)\cong Z(\unit)$.
\end{enumerate}

\subsection{The adjoint subring and the universal grading group}\label{sec:universalgroup}

We briefly recall from \cite{Gelaki:2006} the notion of the universal grading group and of the adjoint subring (see also \cite[Ch.\,3]{EGNO}).

Let $\catname{F}$ be a fusion category and $I$ a set of representatives of isomorphism classes of simple objects in $\catname F$.
The duality on $\catname{F}$ defines an involution $\overline{(\;)}: I \rightarrow I$ by requiring that $\ibar\cong i^\ast$. Then, the Grothendieck ring $\mathrm{Gr}(\catname F) \equiv R$ is a unital based ring with basis 
$\{b_i\}_{i\in I}$. The ring $R$ is \textit{transitive} in the sense that for any $i,j \in I$ there exists $k\in I$ such that $N_{ik}^{j}\neq 0$.

\begin{definition}
The \textit{adjoint subring} $R_{ad}\subset R$ is generated by all basis elements contained in $b_ib_{\ibar}$ for some $i \in I$. We denote by $I_{ad}\subset I$ the index set of the basis $\{b_i\}_{i\in I_{ad}}$ of $R_{ad}$.  
\end{definition}

It is shown in \cite[Thm.\,3.5]{Gelaki:2006} that
the ring $R$ decomposes into a direct sum of indecomposable based
$R_{ad }$-bimodules $R= \bigoplus_{g\in G} R_g$, and that
the product of $R$ induces a group structure on $G$ with $R_e= R_{ad}$. 
In particular, $R$ is a faithful $G$-graded ring. The set $I_{g}\subset I$ will denote the index set of the basis $\{b_i\}_{i\in I_{g}}$ of $R_{g}$.
Transitivity of $R$ now implies that $R_{ad}$ acts transitively on $R_g$ for each $g \in G$: for all $x,y \in I_g$ there is $i \in  I_{ad}$ such that $N_{xi}^{~y}\neq 0$.

\begin{definition}
The group $G$ is called the \textit{universal grading group of $R$}. 
\end{definition}

We define a filtration on $R_{ad}$ as follows. For $i\in I_{ad}$ let $n(i)$ be the minimal integer such that $b_i$ is contained in $b_{m_{1}}b_{\overline{m}_{1}}\dots b_{m_{n(i)}}b_{\overline{m}_{n(i)}}$ for some $m_1,\dots,m_{n(i)} \in I$. 
Such labels exist by the definition of the adjoint subring. For the unit we set $n(\unit )=0$.
Setting
\begin{equation}\label{eq:filtrationdef}
R^{(n)}_{ad} ~=~ \langle\, b_i \in R_{ad}~|~n(i)\leq n\,\rangle~.
\end{equation}
we get the filtration 
\begin{equation}\label{eq:filtration}
\mathbb{Z} \, b_{\unit} = R^{(0)}_{ad} \subset R^{(1)}_{ad} \subset R^{(2)}_{ad} \subset \cdots 
\end{equation}
of $R_{ad}$. 
Let $N$ denote the minimal number such that $R_{ad}^{(N)} = R_{ad}$. Since the filtration is strictly increasing until degree $N$, and since $I_{ad} \subset I$, we trivially have that $N \le |I|$.

\subsection{Structure constants}\label{sec:structureconstants}
Let $A \in \catname C$ be a simple non-degenerate algebra. By Theorem \ref{thm:fullcentre} and Lemma \ref{lem:nondeg-ssFa} the full centre $Z(A)$ is a haploid normalised-special commutative symmetric modular invariant Frobenius algebra.

\begin{lemma}\label{lem:ZA-object}
The full centre $Z(A)$ has the same underlying object as $Z(\unit)$, \ie $Z(A)\cong \bigoplus_{i\in I} i^\ast\times i$ as objects in $\doublecat$. 
\end{lemma}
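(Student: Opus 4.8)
The plan is to use irreducibility of the genus-$1$ representation $V_1^{\catname C}$ together with Corollary~\ref{cor:mcginvariants} and Proposition~\ref{prop:CB-mod-inv}. By Theorem~\ref{thm:fullcentre}, $B := Z(A)$ is a haploid commutative non-degenerate modular invariant algebra in $\doublecat$ with $\dim_{\doublecat} Z(A) = D^2$, and the same holds for $Z(\unit)$ by Example~\ref{ex:Z(1)-const}. First I would invoke Proposition~\ref{prop:CB-mod-inv} to produce the $\Mod_1$-invariant vector $C(Z(A))_1 \in V^{\doublecat}(\Sigma_1)$, and likewise $C(Z(\unit))_1$. Since $V_1^{\catname C}$ is assumed irreducible, Corollary~\ref{cor:mcginvariants} gives $\dim V^{\doublecat}(\Sigma_1)^{\Mod_1} = 1$, so these two invariant vectors must be proportional. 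The next step is to check that $C(Z(\unit))_1 \neq 0$ — this follows because for $B = Z(\unit)$, pairing $C(B)_1$ against suitable morphisms recovers nonzero structure data (e.g.\ the counit on the haploid summand is nonzero, or one simply notes $C(B)_1$ evaluated on the multiplicity-free $\unit\times\unit$ component is $\dim_{\doublecat}(B) \neq 0$). Hence both $C(Z(A))_1$ and $C(Z(\unit))_1$ are nonzero multiples of the unique invariant vector, so $C(Z(A))_1 = \kappa \cdot C(Z(\unit))_1$ for some $\kappa \in \mathbbm{k}^\times$ (one can pin down $\kappa$ using that both algebras have dimension $D^2$ and both are haploid, comparing the $\unit\times\unit\to\unit\times\unit$ component).

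The heart of the argument is then to extract the underlying object of $Z(A)$ from the vector $C(Z(A))_1$. The key observation is that for a haploid commutative symmetric Frobenius algebra $B$, the vector $C(B)_1 \in \doublecat(\unit\times\unit, \mathbb{L})$ — given by $\Delta^{(2)}$ composed with $\eta$ and decorated with $\Phi$ and the (co)evaluation bases — determines, and is determined by, the multiplicities $\langle i^\ast\times j, B\rangle$ with which each simple $i^\ast \times j$ appears in $B$. Concretely, precomposing $C(B)_1$ with a projector onto the $(i\times j)\otimes(i^\ast\times j^\ast)$ summand of $\mathbb{L}$ and using $\Delta$-separability, one reads off that the coefficient is (up to the known normalisations $D^2/(d_id_j)$ and quantum dimensions) exactly $\dim_{\mathbbm{k}} \doublecat(i^\ast \times j, B)$. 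Thus $C(B)_1$ encodes the class $[B] \in \mathrm{Gr}(\doublecat)$. Applying this to both $B = Z(A)$ and $B = Z(\unit)$, the proportionality $C(Z(A))_1 = \kappa\, C(Z(\unit))_1$ forces $[Z(A)] = [Z(\unit)]$ in the Grothendieck ring (the scalar $\kappa$ must be $1$ once one compares the $\unit\times\unit$-multiplicities, both being $1$ by haploidness). Since $[Z(\unit)] = \sum_{i\in I} [i^\ast \times i]$ by Example~\ref{ex:Z(1)-const}, this gives $Z(A) \cong \bigoplus_{i\in I} i^\ast\times i$ as objects.

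The main obstacle I anticipate is the bookkeeping in the second paragraph: carefully verifying that distinct simple summands of $\mathbb{L}$ contribute \emph{linearly independent} pieces to $C(B)_1$, so that proportionality of the vectors really does entail equality of all multiplicities rather than just of some aggregate invariant. This requires writing $C(B)_1$ in the basis of $\doublecat(\unit\times\unit,\mathbb{L})$ adapted to the decomposition $\mathbb{L} = \bigoplus_{i,j}(i\times j)\otimes(i^\ast\times j^\ast)$ and checking that the component in the block indexed by $(i,j)$ vanishes unless $i^\ast\times j \preceq B$, in which case it is a nonzero multiple (with the explicit prefactor coming from \eqref{eq:definvariants} and the normalisation of $\Phi$) of a fixed nonzero vector determined by the Frobenius structure. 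A secondary point to handle is confirming $\kappa = 1$: alternatively one can avoid fixing $\kappa$ and simply argue that two effective classes in $\mathrm{Gr}(\doublecat)$ that are scalar multiples of one another, and that both contain $\unit\times\unit$ with multiplicity exactly one, must be equal. Granting these, the lemma follows; note that only genus $1$ is used here, the higher-genus irreducibility being reserved for the later steps that upgrade the object-level isomorphism to an algebra isomorphism.
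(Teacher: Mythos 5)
Your proposal is correct and follows essentially the same route as the paper: genus-one irreducibility makes the space of invariants in $V_1^{\doublecat}$ one-dimensional, the multiplicity matrix of $Z(A)$ is identified with (a component of) such an invariant, and haploidity pins the proportionality constant to $1$. The only difference is that the paper obtains the invariance of the matrix $Z(A)_{ij}=\dim\doublecat(i\times j,Z(A))$ directly by citing \cite[Eq.\,(3.7)]{KR1} (commutation with $S$, and with $T$ from triviality of the twist), whereas you re-derive the equivalent fact from the vector $C(Z(A))_1$ of Proposition~\ref{prop:CB-mod-inv}; the step you flag as the main obstacle does go through, since the blocks of $\mathbb{L}$ are genuinely distinct direct summands of $V_1^{\doublecat}$ and, for a symmetric $\Delta$-separable Frobenius algebra, the $(i,j)$-block of $C(B)_1$ equals $\dim\doublecat(i\times j,B)$ times the coevaluation (the Frobenius trace of the isotypic projector coincides with its quantum trace).
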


\begin{proof}
By \cite[Eq.~(3.7)]{KR1}, the matrix $Z(A)_{ij} = \dim \doublecat(i\times j, Z(A))$ commutes with the $S$-generator, and it commutes with the $T$-generator since $Z(A)$ has trivial twist. By irreducibility of $V_{g=1}^{\catname C}$, the space of invariants in $V_{g=1}^{\doublecat}$ is one-dimensional (Corollary~\ref{cor:mcginvariants}). Hence
there exists a constant $\lambda \in \mathbbm{k}$ 
such that 
\begin{equation}
Z(A)_{ij} =\lambda\, Z(\unit)_{ij} = \lambda\, \delta_{\ibar j}~.
\end{equation}
By haploidity,
$Z(A)_{\unit \unit} = 1$ and therefore $\lambda= 1$. Altogether, 
$\dim \doublecat(i\times j, Z(A)) = \delta_{\ibar, j}$
\ie the underlying object of $Z(A)$ is $\bigoplus_{i\in I} i^\ast \times i$. 
\end{proof}

Denote by $e_i: i^\ast\times i \rightarrow Z(A)$ and $r_i: Z(A)\rightarrow i^\ast\times i$ the embedding and projection
of $i^\ast\times i$ as a subobject of
$Z(A)$, \ie $r_i \circ e_i = \id{}$. Given the underlying object of $Z(A)$ as in Lemma~\ref{lem:ZA-object}, we now make a general ansatz for the Frobenius algebra structure on $Z(A)$. Namely, in terms of constants $\eta_0$, $\varepsilon_0$, $(\lambda_{ij}^k)^\beta_\alpha$, $(\lambda_{k}^{ij})^\alpha_\beta \in \mathbbm{k}$ we set
\begin{align}
    \eta_{Z(A)} &~=~ \eta_0 \, e_{\unit}
\nonumber\\[.5em]
    \varepsilon_{Z(A)} &~=~ \varepsilon_0\, D^2 \, r_{\unit}
\nonumber\\
\mu_{Z(A)}~&=~ \bigoplus_{i,j,k} \sum_{\alpha,\beta= 1}^{N_{ij}^{~k}} (\lambda_{ij}^k)^\beta_\alpha\boxpic{0.7}{figures/multconstants.pdf}{
			 		\put (12, -5) {$ j^\ast $}
			 		\put (2, -5) {$ i^\ast $}
			 		\put (51, 47) {$ k^\ast $}
			 		\put (76, -5) {$ i $}
			 		\put (100, -5) {$ j $}
			 		\put (85, 47) {$ k $}
			 		\put (30, 20) {$\overline \alpha $}
			 		\put (92, 20) {$\beta $}
			 		\put (60, 15) {$\otimes_\mathbbm{k}$}}
\nonumber\\[1em]
    \Delta_{Z(A)}
~&=~ \bigoplus_{i,j,k} \sum_{\alpha,\beta= 1}^{N_{ij}^{~k}} \frac{d_i d_j}{d_k D^2}(\lambda_{k}^{ij})^\alpha_\beta\boxpic{0.7}{figures/comultconstants.pdf}{
		\put (14, 47) {$ j^\ast $}
		\put (0, 47) {$ i^\ast $}
		\put (54.5, -5) {$ k^\ast $}
		\put (75, 47) {$ i $}
		\put (101, 47) {$ j $}
		\put (88, -5) {$ k $}
		\put (30, 21) {$\alpha $}
		\put (92, 18) {$\overline \beta$}
		\put (60, 15) {$\otimes_\mathbbm{k}$}}\quad .
\label{eq:ZA-constants}
\end{align}
On the right hand side of $\mu_{Z(A)}$ we did not spell out the embedding and projection
morphisms $r_k \circ (\cdots) \circ (e_i \otimes e_j)$, and dito for $\Delta_{Z(A)}$.

\begin{lemma}\label{lem:CZ1-nonzero}
The elements $C(Z(A))_g$ from \eqref{eq:definvariants} are non-zero for every $g \ge 0$.
\end{lemma}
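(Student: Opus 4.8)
The plan is to show that the vector $C(Z(A))_g$ does not vanish by exhibiting a suitable "test covector" that pairs nontrivially with it. The natural candidate comes from the handlebody picture: gluing the ribbon graph defining $C(Z(A))_g$ in Figure~\ref{fig:handlebodycoupon} to a complementary handlebody produces a closed-$3$-manifold invariant, and one wants to recognise this invariant as something manifestly nonzero, e.g. a nonzero power of $D$. Concretely, $C(B)_g$ is built from $\Delta^{(2g)}\circ\eta$ composed with $\Phi$'s and pairings $\overline\alpha_l\otimes\alpha_l^\ast$; if we cap it off by composing with $\eta^{(2g)}$ read backwards (i.e. with $\varepsilon^{(2g)}\circ\mu^{(2g)}$-type morphisms, or more simply by pairing the $l$-th $\mathbb{L}$-factor against the canonical copairing $\mathrm{coev}$) we should reduce the whole expression to a composite of (co)products and (co)units of $B$. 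Since $B=Z(A)$ is normalised-special symmetric Frobenius by Theorem~\ref{thm:fullcentre} and Lemma~\ref{lem:nondeg-ssFa}, the relations $\mu\circ\Delta=\id{}$ and $\varepsilon\circ\eta\neq0$ let us collapse any such "sphere-like" composite to a scalar multiple of $\varepsilon\circ\eta$, which is nonzero. The genus enters only through how many (co)product/(co)pairing layers appear, and $\Delta$-separability is exactly what prevents accumulation of a zero factor.

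In more detail, the steps I would carry out are: (1) Fix $g\ge1$ and write out $C(Z(A))_g$ using the explicit form \eqref{eq:definvariants}, choosing the iterated coproduct $\Delta^{(2g)}$ so that the strands pair up as $(2l-1,2l)$. (2) Pair $C(Z(A))_g$ with the covector obtained by applying, on each $\mathbb{L}$-factor of $\mathbb{L}^{\otimes g}$, the evaluation $\widetilde{\mathrm{ev}}_{i_l\times j_l}$ on $(i_l\times j_l)\otimes(i_l^\ast\times j_l^\ast)$, summed over $i_l,j_l\in I$ with appropriate dimension weights; this is nothing but the morphism in $V^{\doublecat}(\Sigma_g)^\ast$ dual to the TQFT handlebody on the other side. (3) Use the definition of $\Phi$ in \eqref{eq:algebraphi}, the symmetry \eqref{eq:symmetric}, and the duality zig-zags to recognise that the inserted $\Phi$'s and the $\overline\alpha_l\otimes\alpha_l^\ast$ sums contract to identity morphisms on copies of $B$ (this is the same manipulation used in step (1) of the proof of Proposition~\ref{prop:CB-mod-inv}). (4) What remains is $\varepsilon^{(?)}\circ\Delta^{(2g)}\circ\eta$ type composite, i.e. $B$ built up by $\Delta$'s and torn down by $\varepsilon$; using $\mu\circ\Delta=\id{}$ repeatedly (equivalently, counitality plus $\Delta$-separability) this equals $\varepsilon\circ\eta$ up to a nonzero power of $D$ coming from the normalisations $D^2/(d_id_j)$, $d_id_j/(d_kD^2)$ in \eqref{eq:Z1-product-coproduct}. (5) Since $Z(A)$ is special, $\varepsilon\circ\eta\neq0$, hence the pairing is nonzero, hence $C(Z(A))_g\neq0$. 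The case $g=0$ is immediate since $C(B)_0=\varepsilon\circ\eta\neq0$ by speciality.

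The main obstacle I anticipate is step (4): keeping careful track of the graphical contraction so that the final scalar is genuinely a nonzero power of $D$ rather than accidentally a sum of terms that could cancel. Because $Z(A)$ is only haploid commutative and not, say, a matrix algebra in some simple form, one cannot simply invoke "it's a nonzero algebra"; one must really use that the Frobenius structure is normalised-special (Theorem~\ref{thm:fullcentre}), so that every closed $\Delta$-then-$\mu$ bubble evaluates to $1$ and every closed $\varepsilon$-$\eta$ circle evaluates to $\dim_{\doublecat}Z(A)=D^2\neq0$. An alternative, perhaps cleaner route is to observe that $C(B)_g$ is precisely the rational CFT bulk correlator on $\Sigma_g$ built from the Cardy algebra with closed part $B$ (as noted in the remarks following Proposition~\ref{prop:CB-mod-inv}), and for $B=Z(A)$ with $A$ simple this correlator is nondegenerate by \cite{FRS1,KLR}; but since the excerpt promises a short self-contained argument, I would present the graphical calculus version above.
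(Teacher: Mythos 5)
Your proposal is correct in substance, but it takes a genuinely different route from the paper. The paper's own proof is much shorter: for $g\ge 1$ it simply restricts to the single direct summand of \eqref{eq:definvariants} with $i_m=j_m=\unit$ for all $m$. Haploidity of $Z(A)$ ensures this summand has no multiplicity sum and that the projection onto $\unit\times\unit$ is a nonzero multiple ($\varepsilon_0 D^2$) of the counit, so the summand collapses to a nonzero multiple of $\varepsilon_{Z(A)}\circ\eta_{Z(A)}=D^2\neq 0$; a nonzero summand forces the whole vector to be nonzero. You instead pair $C(Z(A))_g$ against the ``dual handlebody'' covector built from evaluations on each $\mathbb{L}$-factor, contract the $\sum_\alpha\overline\alpha\otimes\alpha^\ast$ and $\Phi$ back to the trace pairing $\varepsilon\circ\mu$, and then use $\mu\circ\Delta=\id{}$ and counitality to collapse everything to $\varepsilon\circ\eta$. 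This works and is the TQFT-natural ``invariant of the double'' argument, but it requires the duality bookkeeping of your steps (2)--(3), which the paper avoids entirely by projecting to one summand. Two small cautions on your write-up: the final scalar should be obtained \emph{basis-free} from the Frobenius axioms of $Z(A)$ (speciality, counitality), not from the explicit normalisations in \eqref{eq:Z1-product-coproduct} --- those are the structure constants of $Z(\unit)$, whereas the constants of $Z(A)$ are precisely the unknowns at this stage of the argument, so any expansion in them would be premature; and your worry about cancellations in step (4) disappears once you choose the tree for $\Delta^{(2g)}$ so that each contracted pair of strands comes from a single coproduct vertex, whence each cap is literally $\varepsilon\circ\mu\circ\Delta=\varepsilon$.
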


\begin{proof}
The element $C(Z(A))_0$ is non-zero since $\varepsilon_{Z(A)}\circ \eta_{Z(A)} = \dim_{\doublecat} (Z(A)) = D^2$, where the first equality follows from the symmetric normalised-special Frobenius algebra structure. 
Now, let $g\ge 1$ and consider in \eqref{eq:definvariants} the summand of $C(Z(A))_g$ where $i_m = j_m = \unit$ for $m=1,\dots,g$. 
Since $\unit \times \unit$ appears in $Z(A)$ with multiplicity one, there is no sum over multiplicities. Up to factors of $\varepsilon_0 D^2 \neq 0$, the result is the same
as composing all out-going $Z(A)$-factors with the counit $\varepsilon_{Z(A)}$. The overall expression then reduces to $\varepsilon_{Z(A)} \circ \eta_{Z(A)} = D^2$.
\end{proof}

As in \cite[Sec.\,2.2]{FRS1}, for every $i\in I$ fix an isomorphism $\pi_i: i \rightarrow \ibar^\ast$, which exists by definition of the involution $i\mapsto \ibar$. We use these isomorphisms to express the fusion basis in $\catname C (i\otimes \ibar, \unit) $ and its dual in terms of dualities in $\catname C$. Namely, there exist $\phi_i, \tilde{\phi}_i \in \mathbbm{k}^\times$ such that 
\begin{equation}\label{eq:pi}
  \boxpic{0.7}{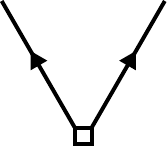}{
  \put (0,95) {$i$}
  \put (90,95) {$\ibar$}
  }
  = \phi_i \boxpic{0.7}{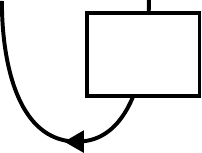}{
  \put (-4,83) {$i$}
  \put (72,83) {$\ibar$}
  \put (52,42.5) {\small$\pi_{\ibar}^{-1}$}
  }
  \quad,\quad \boxpic{0.7}{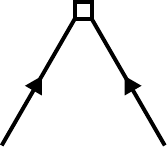}{
  \put (0,-29) {$i$}
  \put (105,-29) {$\ibar$}
  } 
  = \tilde\phi_i \boxpic{0.7}{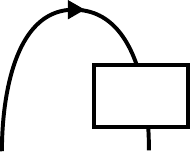}{
  \put (-4,-29) {$i$}
  \put (76,-29) {$\overline i$}
  \put (62,25) {\small$\pi_{\overline i}$}
  }
  \rule[-2.2em]{0em}{4.7em}
\end{equation}
as the respective morphism spaces are one-dimensional. By the normalisation chosen for fusion bases, one obtains 
\begin{equation}\label{eq:phi}
\phi_i\,\tilde{\phi}_i = \frac{1}{d_i}~.
\end{equation}

In the following lemma we will give the isomorphism $\Phi$ for $Z(A)$ in a basis, which will later be used to express the modular invariants $C(Z(A))_g$.

\begin{lemma}\label{lem:phicomputation}
For any $i \in I$, we have 
\begin{equation*}
    e^\ast_i \circ \Phi_{Z(A)} \circ e_{\ibar} ~=~ \frac{D^2 \theta_i}{d_i} \, \varepsilon_0 \,\lambda_{i \ibar}^\unit ~ (\pi_{\ibar}^{-1})^\ast\otimes_\mathbbm{k} \pi_{\ibar}
    \quad :~~ 
    \ibar^\ast\times \ibar \longrightarrow i^{\ast\ast}\times i^\ast
    ~.
\end{equation*}
\end{lemma}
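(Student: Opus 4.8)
The goal is to compute the composite $e^\ast_i \circ \Phi_{Z(A)} \circ e_{\ibar}$, where $\Phi_{Z(A)} = (\id \otimes \varepsilon_{Z(A)}) \circ (\mu_{Z(A)} \otimes \id) \circ (\id \otimes b_{Z(A)})$ is the morphism $Z(A) \to Z(A)^\ast$ from \eqref{eq:algebraphi}. Since $\Phi$ involves the product $\mu_{Z(A)}$ and the counit $\varepsilon_{Z(A)}$, and the latter only sees the $\unit \times \unit$ summand, the plan is to substitute the ansatz \eqref{eq:ZA-constants} and observe that only the single term $k = \unit$ in $\mu_{Z(A)}$ survives after composing with $\varepsilon_{Z(A)} = \varepsilon_0 D^2 r_\unit$. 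This forces $j = \ibar$ in the product (since $N_{i\ibar}^{~\unit} = 1$ and $N_{ij}^{~\unit} = 0$ otherwise), and there is a single multiplicity label, so the sum over $\alpha,\beta$ collapses to the single constant $\lambda_{i\ibar}^\unit \equiv (\lambda_{i\ibar}^\unit)^1_1$.

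First I would write out $e^\ast_i \circ \Phi_{Z(A)} \circ e_{\ibar}$ as a string diagram: the coevaluation of $Z(A)$ followed by the product, capped by the counit. Using Lemma~\ref{lem:ZA-object} to decompose $Z(A)^\ast$'s coevaluation into a sum over summands $i^\ast \times i$, and using $\varepsilon_{Z(A)} = \varepsilon_0 D^2 r_\unit$, the diagram reduces to: the product $\mu_{Z(A)}$ restricted to its $k = \unit$ component (composed with $r_\unit$), paired via $\varepsilon_0 D^2$ against the relevant summands. The remaining combinatorics is then just plugging in the explicit form of $\mu_{Z(A)}$ from \eqref{eq:ZA-constants} on the summand $(i^\ast \times i) \otimes (\ibar^\ast \times \ibar) \to \unit\times\unit$, which contributes the constant $(\lambda_{i\ibar}^\unit)^\bullet_\bullet$ times the fusion morphism $\overline\alpha \otimes_\mathbbm{k} \beta$ with target $\unit^\ast \times \unit = \unit\times\unit$; since $N_{i\ibar}^{~\unit}=1$, this is $\lambda_{i\ibar}^\unit$ times the canonical pairing $i^\ast \otimes \ibar^\ast \to \unit$ (tensored with $i \otimes \ibar \to \unit$).

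Next I would convert the resulting abstract fusion morphisms into honest duality morphisms of $\catname C$ using the isomorphisms $\pi_i : i \to \ibar^\ast$ of \eqref{eq:pi} and the scalars $\phi_i, \tilde\phi_i$ relating the fusion basis to evaluation/coevaluation. This is where the factors $\theta_i$, $d_i$ enter: tracking the ribbon structure through the coevaluation-then-evaluation of $Z(A)$ (which is where $\Phi$ differs from a bare pairing) produces a twist eigenvalue $\theta_i$, and normalizing the fusion basis via \eqref{eq:phi} produces the $1/d_i$. The factor of $D^2$ comes from $\varepsilon_{Z(A)} = \varepsilon_0 D^2 r_\unit$, and $\varepsilon_0$ is carried along untouched. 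Assembling, the composite equals $\tfrac{D^2 \theta_i}{d_i}\,\varepsilon_0\,\lambda_{i\ibar}^\unit$ times $(\pi_{\ibar}^{-1})^\ast \otimes_\mathbbm{k} \pi_{\ibar}$, which is the claimed formula.

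The main obstacle is purely bookkeeping: getting all the duality conventions, the direction of bends, and the placement of the $\pi_i$ isomorphisms consistent so that the scalars $\phi_i, \tilde\phi_i$ combine correctly into $1/d_i$ via \eqref{eq:phi}, and verifying that the ribbon twist genuinely contributes exactly $\theta_i$ (and not $\theta_i^{\pm 1}$ or $\theta_i^2$) — this requires care with the fact that $\Phi$ uses a right coevaluation and the symmetry/separability of $Z(A)$ is not invoked here, only its product and counit. I do not expect any conceptual difficulty; the computation is a direct diagrammatic manipulation once the ansatz \eqref{eq:ZA-constants} and the identities \eqref{eq:pi}--\eqref{eq:phi} are in hand.
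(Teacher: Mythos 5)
Your proposal is correct and follows essentially the same route as the paper: substitute the ansatz \eqref{eq:ZA-constants} into the counit-based expression for $\Phi_{Z(A)}$ (the left-hand side of \eqref{eq:symmetric}), observe that composing with $\varepsilon_{Z(A)}=\varepsilon_0 D^2 r_\unit$ kills all but the $k=\unit$, $j=\ibar$ term of the product, and then convert the surviving fusion-basis morphisms into dualities via \eqref{eq:pi} and \eqref{eq:phi} to produce the factor $\theta_i/d_i$ and the maps $(\pi_{\ibar}^{-1})^\ast\otimes_{\mathbbm k}\pi_{\ibar}$. The only small imprecision is your remark that symmetry is not invoked: identifying $\Phi$ from \eqref{eq:algebraphi} with the counit-based diagram in \eqref{eq:symmetric} is exactly the content of the symmetric Frobenius structure supplied by Lemma~\ref{lem:nondeg-ssFa}, which is also what the paper uses.
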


\begin{proof}
By using \eqref{eq:ZA-constants} in the expression for $\Phi$ on the left hand side of \eqref{eq:symmetric},
one obtains: 
\begin{equation*}
    e^\ast_i \circ \Phi_{Z(A)} \circ e_{\ibar} = D^2 \varepsilon_0 \lambda_{i\ibar}^\unit 
    \boxpic{0.7}{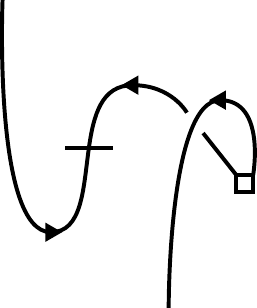}{
    \put (0,105) {$i^{\ast\ast}$}
    \put (50,75) {$i$}
    \put (50,-20) {$\ibar^\ast$}
    }  
    \otimes_\mathbbm{k} \boxpic{0.7}{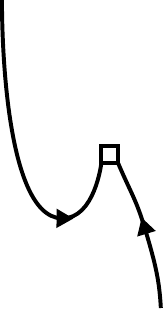}{
    \put (0,105) {$i^{\ast}$}
    \put (50,-20) {$\ibar$}
    }
    \rule[-3.4em]{0em}{6.8em}
    = \frac{D^2 \, \theta_i}{d_i} \, \varepsilon_0 \, 
    \lambda_{i \ibar}^\unit ~ (\pi_{\ibar}^{-1})^\ast \otimes_\mathbbm{k} \pi_{\ibar}
    ~,
\end{equation*}
where the horizontal line denotes the identity $\id{i^\ast}$. The last equality follows from \eqref{eq:pi} and \eqref{eq:phi}. 
\end{proof}

The allowed structure constants are non-zero, diagonal and independent of the multiplicity index:

\begin{lemma}\label{lem:diagonal}
For $i,j,k \in I$ with $N_{ij}^{~k} \neq 0$ we have 
$(\lambda_{ij}^k)_\alpha^\beta = \delta_{\alpha,\beta}\, \lambda_{ij}^k$ and $(\lambda^{ij}_k)_\alpha^\beta = \delta_{\alpha,\beta}\, \lambda^{ij}_k$ with $\lambda_{ij}^k, \lambda^{ij}_k \neq 0$.
\end{lemma}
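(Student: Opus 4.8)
The plan is to exploit the $\Mod_g$-invariance of the vectors $C(Z(A))_g$ and $C(Z(\unit))_g$ together with irreducibility to force the structure constants to be both diagonal in the multiplicity labels and non-zero. Since both $Z(A)$ and $Z(\unit)$ have underlying object $\bigoplus_{i\in I} i^\ast\times i$ (Lemma~\ref{lem:ZA-object}) and both are modular invariant (Theorem~\ref{thm:fullcentre}), by Proposition~\ref{prop:CB-mod-inv} both $C(Z(A))_g$ and $C(Z(\unit))_g$ lie in the space $V^{\doublecat}(\Sigma_g)^{\Mod_g}$, which is one-dimensional by Corollary~\ref{cor:mcginvariants}. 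Combined with Lemma~\ref{lem:CZ1-nonzero}, which says $C(Z(A))_g\neq 0$, we get that $C(Z(A))_g$ is a non-zero scalar multiple of $C(Z(\unit))_g$ for every $g$. The idea is to read off the structure constants from these vectors: evaluating $C(Z(B))_g$ in the direct-sum summand labelled by a fixed multi-index $(i_1,j_1,\dots,i_g,j_g)$ produces, via the iterated coproduct $\Delta^{(2g)}$ and the maps $\Phi_{Z(B)}$, an explicit expression built out of the $(\lambda^k_{ij})^\beta_\alpha$, $(\lambda^{ij}_k)^\alpha_\beta$ (for $B=Z(A)$) versus the $\delta_{\alpha,\beta}$ (for $B=Z(\unit)$, by Example~\ref{ex:Z(1)-const}).

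First I would treat $g=1$. The vector $C(Z(A))_1 \in V^{\doublecat}(\Sigma_1)$ decomposes over pairs $(i,j)$, and on the summand $V_{(i,j)}=\doublecat(\unit\times\unit,(i\times j)\otimes(i^\ast\times j^\ast))$ it is expressed through $\Phi_{Z(A)}$ restricted to $i^\ast\times i \to i^{\ast\ast}\times i^\ast$, which Lemma~\ref{lem:phicomputation} computes to be proportional to $\lambda^\unit_{i\ibar}$ (note there is no multiplicity sum at $g=1$ since only the $i^\ast\times i$ summands of $\mathbb{L}$ contribute to a morphism from $\unit\times\unit$). Proportionality of $C(Z(A))_1$ to $C(Z(\unit))_1$, together with the $g=1$ summand where $i=\unit$ normalising the scalar to be $1$ (using haploidity, as in Lemma~\ref{lem:ZA-object}), then forces $\lambda^\unit_{i\ibar}\neq 0$ for all $i$ — otherwise some component of $C(Z(A))_1$ would vanish while the corresponding component of $C(Z(\unit))_1$ does not. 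So $\Phi_{Z(A)}$ is an isomorphism on each graded piece (consistent with non-degeneracy) and all $\lambda^\unit_{i\ibar}\neq 0$.

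Next, for $g=2$: the summand of $C(Z(A))_2$ labelled by $(i,i^\ast,j,j^\ast)$-type indices — more precisely, the one where the single coproduct $\Delta^{(4)}$ splits as $B\to B\otimes B$ through an intermediate simple $k$ appearing with multiplicity $N_{ij}^{~k}$ — involves the pairing of a $\mu_{Z(A)}$-type structure constant $(\lambda^k_{ij})^\beta_\alpha$ against a $\Delta_{Z(A)}$-type one $(\lambda^{ij}_k)^{\alpha'}_{\beta'}$, contracted over multiplicity indices through the $\Phi_{Z(A)}$-insertions (which by Lemma~\ref{lem:phicomputation} are diagonal scalars in the $\unit$-components). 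Setting this equal to the corresponding component of $C(Z(\unit))_2$, which by Example~\ref{ex:Z(1)-const} is $\delta_{\alpha,\beta}$-diagonal, gives a matrix equation over the multiplicity space $\mathbbm{k}^{N_{ij}^{~k}}$ forcing the product of the matrices $(\lambda^k_{ij})$ and $(\lambda^{ij}_k)$ to be a non-zero multiple of the identity; refining by looking at all components (and using the $g=1$ normalisation to fix scalars) upgrades this to $(\lambda^k_{ij})^\beta_\alpha = \delta_{\alpha,\beta}\lambda^k_{ij}$ and $(\lambda^{ij}_k)^\alpha_\beta = \delta_{\alpha,\beta}\lambda^{ij}_k$ with $\lambda^k_{ij}\lambda^{ij}_k\neq 0$, whence both are individually non-zero. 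A clean way to organise this is to first establish diagonality of $\mu_{Z(A)}$ and $\Delta_{Z(A)}$ separately by choosing, in the definition of $C(Z(A))_g$, an iterated coproduct that isolates one product-coproduct pair and letting all other strands carry $\unit$; the surrounding $\unit$-labelled structure constants are scalars, so the non-diagonal entries of a single $(\lambda^k_{ij})^\beta_\alpha$ must match the vanishing non-diagonal entries on the $Z(\unit)$ side.

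The main obstacle I expect is the bookkeeping in the $g=2$ step: disentangling precisely which multiplicity contractions appear in the graded component of $C(Z(A))_2$, keeping track of the $\pi_i$, $\phi_i$, $\theta_i$ and $d_i$ factors coming through Lemma~\ref{lem:phicomputation} and the normalisations in \eqref{eq:pi}–\eqref{eq:phi}, and verifying that the resulting linear constraint is genuinely "$(\lambda^k_{ij})$ is a non-zero diagonal matrix" rather than merely "$(\lambda^k_{ij})(\lambda^{ij}_k)$ is invertible". One needs to extract enough independent components — likely by also varying which of the two output strands of $\Delta^{(4)}$ is routed through which $\Phi$ — to pin down each matrix entry. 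Everything else (the invariance of $C(Z(A))_g$, one-dimensionality of the invariant space, non-vanishing of $C(Z(A))_g$) is already supplied by the results recalled above, so the content of the lemma is really this explicit comparison of graded components at $g=1$ and $g=2$.
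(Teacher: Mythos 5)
Your overall strategy is the same as the paper's: both $C(Z(A))_2$ and $C(Z(\unit))_2$ are non-zero invariant vectors in a one-dimensional invariant subspace, so $C(Z(A))_2=\lambda_2\,C(Z(\unit))_2$ with $\lambda_2\neq 0$, and one reads off the structure constants by comparing components. However, the decisive step is exactly the one you flag as unresolved, and your proposed workaround for it fails, so there is a genuine gap. The point is that in the graded component of $C(Z(A))_2$ labelled by the two handle colours $(i,k)$ and an internal edge $j$, the two multiplicity pairs are \emph{not} contracted against each other: each pair $(\alpha,\beta)$, resp.\ $(\gamma,\delta)$, sits on the external legs of its own handle via the dual bases $\overline\alpha_k,\alpha_k^\ast$, while the $\Phi$-insertions only contribute the scalars of Lemma~\ref{lem:phicomputation}. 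The component identity therefore reads
\begin{equation*}
\varepsilon_0\,\lambda_{i\ibar}^{\unit}\,(\lambda_{j}^{\ibar k})^\alpha_\beta\,(\lambda_{k}^{ij})^\gamma_\delta \;=\; \delta_{\alpha,\beta}\,\delta_{\gamma,\delta}\,\lambda_2\,,
\end{equation*}
an \emph{outer product} of the two matrices rather than a trace or matrix product. Setting $\alpha=\beta$, $\gamma=\delta$ shows each diagonal entry is non-zero and independent of the index; then setting $\alpha=\beta$ and $\gamma\neq\delta$ kills the off-diagonal entries. Without identifying this factorised structure you cannot exclude the weaker conclusion (invertibility of a product of the two matrices) that you yourself worry about.

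Your fallback --- isolating a single vertex by letting all other strands carry $\unit$ --- cannot repair this: if all leaves of the fusion tree except $i_1,\ibar_1$ are labelled $\unit$, then every internal edge is forced to be $\unit$ (or $i_1$/$\ibar_1$), so the only vertices that occur are of the form $\unit\to i\otimes\ibar$ or $i\to i\otimes\unit$, which are multiplicity-free; a constant $(\lambda^{ij}_k)^\alpha_\beta$ with $N_{ij}^{~k}>1$ is never probed this way. One genuinely needs both handles non-trivially coloured, which is why the paper works with the full two-handle component. A second, minor discrepancy: the paper does not obtain the product constants $(\lambda_{ij}^k)^\beta_\alpha$ from a further genus computation but from the Frobenius identity $\mu=((\varepsilon\circ\mu)\otimes\id{})\circ(\id{}\otimes\Delta)$, which expresses them through the already-diagonalised coproduct constants as $(\lambda_{ij}^k)^\beta_\alpha=\delta_{\alpha,\beta}\,\varepsilon_0\,\lambda_{i\ibar}^{\unit}\,\lambda_{j}^{\ibar k}$; this is cleaner than re-running the invariance argument for $\mu_{Z(A)}$. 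Your preliminary $g=1$ discussion is correct but not needed, since the non-vanishing of $\lambda_{i\ibar}^{\unit}$ drops out of the genus-2 identity above.
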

\begin{proof}
Consider the modular invariant vector $C(Z(A))_{g=2}$ as defined in \eqref{eq:definvariants}, which is given as\footnote{To be precise, the element $C(Z(A))_2$ is defined in the isomorphic morphism space $\bigoplus_{i,k}\doublecat(\unit\times \unit, (\ibar\times i)\otimes (\ibar^\ast\times i^\ast)\otimes (\kbar\times k)\otimes (\kbar^\ast\times k^\ast))$, but we find it convenient to use the form given here, rather than to include the isomorphisms $\pi_{\ibar}^{-1} \times \id{i}$ and $\pi_{\ibar}^* \times \id{i}$, etc.
}
\begin{equation}
    C(Z(A))_2=\bigoplus_{i,k}
    \boxpic{0.7}{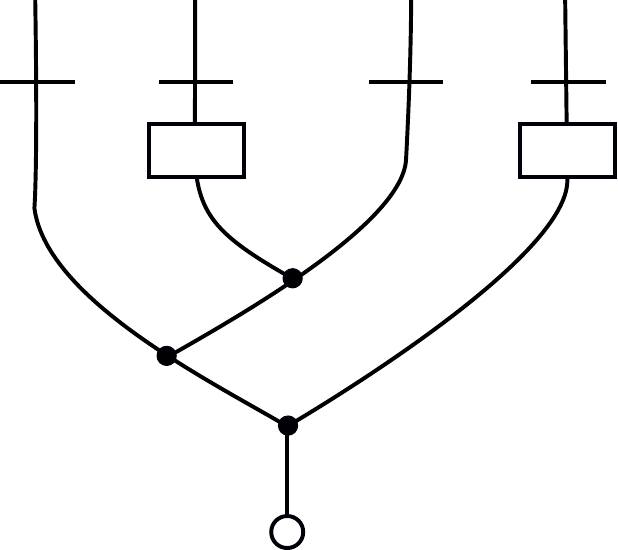}{\put (-8,90) {\small$i^\ast\times i$}
        \put (17,90) {\small$i^{\ast\ast}\times i^\ast$}
        \put (52,90) {\small$k^\ast \times k$}
        \put (80,90) {\small$k^{\ast\ast}\times k^\ast$}
        \put (28,61.5) {\small $\Phi$}
        \put (88,61.5) {\small $\Phi$}
        \put (48,13)  {$Z(A)$}} 
    = \bigoplus_{i,k}
    \boxpic{0.7}{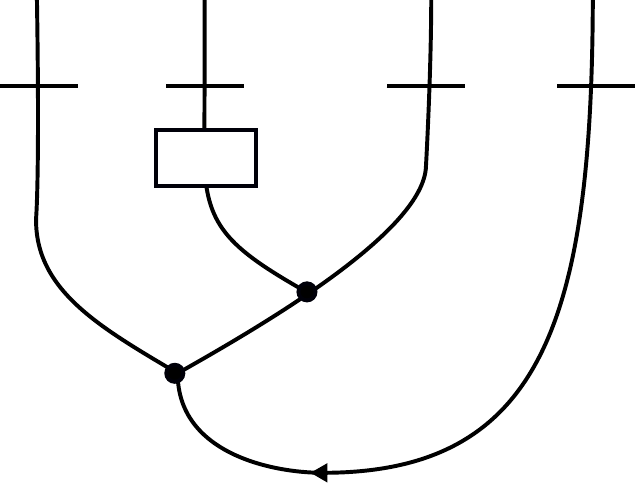}{\put (-7,77) {\small$i^\ast\times i$}
        \put (18,77) {\small$i^{\ast\ast}\times i^\ast$}
        \put (54,77) {\small$k^\ast\times k$}
        \put (83,77) {\small$k^{\ast\ast}\times k^\ast$}
        \put (28.5,48) {\small $\Phi$}
        }
\end{equation}
where the horizontal lines denote the embeddings and projections,
and where
we used that $Z(A)$ is symmetric Frobenius to remove one of the $\Phi$'s. 
Using Lemma~\ref{lem:phicomputation}, we can write $C(Z(A))_2$ explicitly:
\begin{align*}
    \bigoplus_{i,k}\sum_j\sum_{\alpha,\beta,\gamma,\delta}{\frac{d_i\theta_i}{D^2}
    \,
    \varepsilon_0 \, \lambda_{i \ibar}^{\unit} \, (\lambda_{j}^{\ibar k})^\alpha_\beta \, (\lambda_k^{ij})_\delta^\gamma}
    \boxpic{0.7}{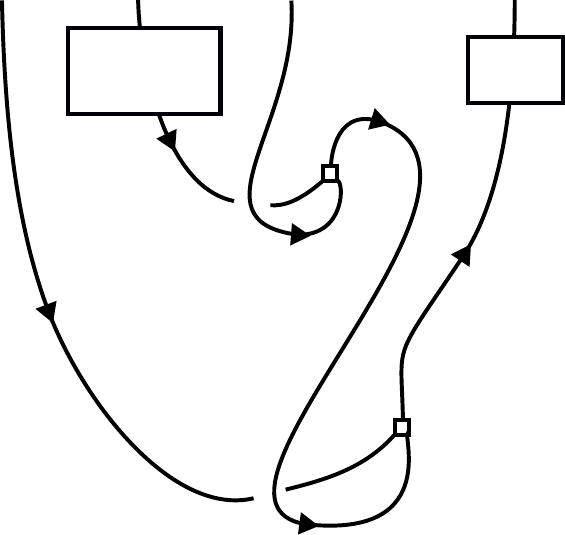}{\put (62,60) {\small$\alpha$}
        \put (75,15) {\small$\gamma$}
        \put (0,97) {\small$i^\ast$}
        \put (23,97) {\small$i^{\ast\ast}$}
        \put (11.7,80) {\small$(\pi_{\ibar}^{-1})^\ast$}
        \put (87,80) {\small$a_k$}
        \put (50,97) {\small$k^\ast$}
        \put (89,97) {\small$k^{\ast\ast}$}
        \put (54,30) {\small $j$}
        }
    \otimes_\mathbbm{k}  
    \boxpic{0.7}{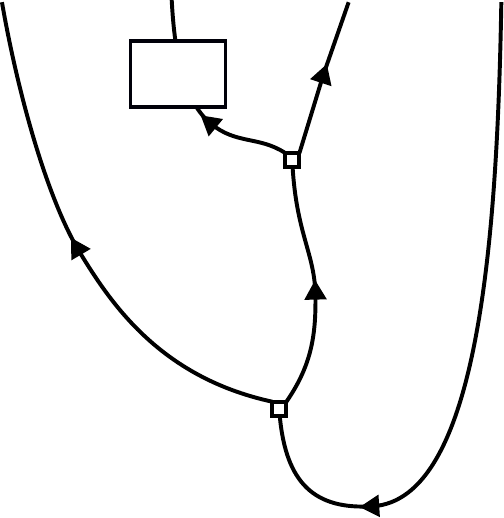}{\put (-3,102) {\small$i$}
        \put (32,102) {\small$i^\ast$}
        \put (30,83.5) {\small$\pi_{\ibar}$}
        \put (65,102) {\small$k$}
        \put (96,102) {\small$k^\ast$}
        \put (61,65) {\small$\overline\beta$}
        \put (59,12) {\small$\overline \delta$}
        \put (64,35) {\small $j$}}
        \rule[-3.2em]{0em}{7em}
\end{align*}
where $a_k: k\rightarrow k^{\ast\ast}$ denotes the pivotal structure isomorphism. 

Since $C(Z(A))_2$ and $C(Z(\unit))_2$ are both modular invariant vectors and the space of invariants is one-dimensional, and since $C(Z(A))_2$ and $C(Z(\unit))_2$ are non-zero by Lemma~\ref{lem:CZ1-nonzero}
there exists $\lambda_2\in \mathbbm{k}^\times$ such that 
\begin{equation}\label{eq:genus2modinv}
C(Z(A))_2 ~=~ \lambda_2\, C(Z(\unit))_2~.
\end{equation}
Let $i,j,k\in I$ such that $N_{ij}^{~k}\neq 0$, then by comparing both sides of \eqref{eq:genus2modinv} and using Example~\ref{ex:Z(1)-const} we obtain 
\begin{equation}\label{eq:lam-lam-cond1}
\varepsilon_0\,\lambda_{i \ibar}^\unit \,(\lambda_{j}^{\ibar k})^\alpha_\beta \,(\lambda_{k}^{ij})^\gamma_\delta ~=~ \delta_{\alpha,\beta} \,\delta_{\gamma,\delta}\, \lambda_2~.
\end{equation}
For $\alpha=\beta$ and $\gamma=\delta$ we get
\begin{equation}
\varepsilon_0\,\lambda_{i\ibar}^\unit \,(\lambda_{j}^{\ibar k})^\alpha_\alpha \,(\lambda^{ij}_{k})^\gamma_\gamma 
~=~ 
\lambda_2 \neq 0 ~.
\end{equation}
This shows that $(\lambda_{j}^{\ibar k})^\alpha_\alpha$ and $(\lambda^{ij}_{k})^\gamma_\gamma$ are non-zero and independent of $\alpha$, $\gamma$. I.e.\ for $N_{ij}^{~k} \neq 0$ there exists
$\lambda^{ij}_k \in \mathbbm{k}^\times$ such that
$(\lambda^{ij}_{k})^\gamma_\gamma = \lambda^{ij}_k$ for $\gamma = 1,\dots,N_{ij}^{~k}$.
Taking $\alpha=\beta$ but
$\gamma \neq \delta$ in \eqref{eq:lam-lam-cond1} gives the desired form for the comultiplication structure constants
\begin{equation}\label{eq:lam-is-delta}
    (\lambda^{ij}_{k})^\gamma_\delta
    ~=~ \delta_{\gamma,\delta}\,\lambda^{ij}_k ~.
\end{equation}
To get also the expression for the structure constants of the product as claimed in the lemma, insert~\eqref{eq:ZA-constants} into $\mu = ((\varepsilon \circ \mu) \otimes \id{})\circ (\id{} \otimes \Delta)$. This gives 
\begin{equation}
(\lambda^{k}_{ij})^\beta_\alpha ~=~ \delta_{\alpha,\beta} \, \varepsilon_0\, \lambda_{i\ibar}^\unit\,\lambda_{j}^{\ibar k} ~=~ \delta_{\alpha,\beta} \, \lambda_{ij}^k~,
\end{equation}
with $\lambda_{ij}^k = \varepsilon_0\, \lambda_{i\ibar}^\unit\,\lambda_{j}^{\ibar k} \neq 0$.
\end{proof}

\medskip

\noindent
\begin{minipage}{\textwidth}
\begin{lemma}\label{lem:someproperties}
The structure constants obey the following properties: 
\begin{enumerate}
    \item (Unitality and counitality) $\lambda_{\unit i}^{i} = \lambda_{i\unit}^{i} = \eta_0^{-1}$ and $\lambda_{i}^{\unit i} = \lambda_{i}^{i\unit } = \varepsilon_0 ^{-1}$
    \item (Commutativity) $\lambda_{ij}^{k} = \lambda_{ji}^{k}$
    \item (Index lowering and raising)
    $\lambda^{k}_{ij} =\, \varepsilon_0\, \lambda_{i\ibar}^\unit\,\lambda_{j}^{\ibar k}$
    and 
    $\lambda_{k}^{ij} = \eta_0 \, \lambda_{\unit }^{i\ibar }\, \lambda_{\ibar k}^{j}$ 
\end{enumerate}
\end{lemma}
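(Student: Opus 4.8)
The plan is to derive each of the three properties directly from the defining structure of $Z(A)$ as a haploid normalised-special commutative symmetric Frobenius algebra, combined with the explicit ansatz \eqref{eq:ZA-constants} and the diagonal form already established in Lemma~\ref{lem:diagonal}. For part~1, I would write out unitality $\mu_{Z(A)} \circ (\eta_{Z(A)} \otimes \id{Z(A)}) = \id{Z(A)}$, restricting to the summand labelled by $i \in I$: the left-hand side picks up the factor $\eta_0$ from $\eta_{Z(A)} = \eta_0\, e_{\unit}$ and the factor $\lambda_{\unit i}^{i}$ from $\mu_{Z(A)}$ (with the unique multiplicity label, since $N_{\unit i}^{~i}=1$), so $\eta_0 \lambda_{\unit i}^{i} = 1$. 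The case $\lambda_{i \unit}^{i}$ is identical using unitality on the other side, and the counitality statements $\lambda_{i}^{\unit i} = \lambda_{i}^{i \unit} = \varepsilon_0^{-1}$ follow dually from $(\id{Z(A)} \otimes \varepsilon_{Z(A)}) \circ \Delta_{Z(A)} = \id{Z(A)}$ together with $\varepsilon_{Z(A)} = \varepsilon_0 D^2\, r_{\unit}$ and the normalisation factor $\tfrac{d_i d_\unit}{d_i D^2} = D^{-2}$ appearing in $\Delta_{Z(A)}$, which cancels the $D^2$ in the counit.

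For part~2, commutativity of $Z(A)$ means $\mu_{Z(A)} \circ c_{Z(A),Z(A)} = \mu_{Z(A)}$. I would examine the effect of the braiding on the $(i,j,k)$-summand of $\mu_{Z(A)}$: precomposing with $c$ swaps the two incoming summands $i^\ast \times i$ and $j^\ast \times j$, which on the level of fusion spaces identifies $\catname C(i \otimes j, k)$ with $\catname C(j \otimes i, k)$ via the braiding isomorphism; tracking how the basis $\{\alpha\}$ and dual basis transform, and using that the structure constants are already diagonal by Lemma~\ref{lem:diagonal}, the equation collapses to $\lambda_{ij}^{k} = \lambda_{ji}^{k}$. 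One has to be a little careful here about the two tensor factors of $\doublecat = \catname C \boxtimes \catname C^{\mathrm{rev}}$ contributing opposite braidings, but the combination is exactly what makes the braiding on $i^\ast \times i$ behave compatibly; alternatively one can invoke that $Z(A)$ is commutative by Theorem~\ref{thm:fullcentre} and simply read off the constraint.

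Part~3 is the index raising/lowering relation, and it is essentially a restatement of the Frobenius property together with symmetry, applied to the explicit bases. The first relation $\lambda^{k}_{ij} = \varepsilon_0\, \lambda_{i\ibar}^{\unit}\, \lambda_{j}^{\ibar k}$ was in fact already obtained inside the proof of Lemma~\ref{lem:diagonal} (from inserting \eqref{eq:ZA-constants} into $\mu = ((\varepsilon \circ \mu) \otimes \id{}) \circ (\id{} \otimes \Delta)$), so here I would simply recall that computation. For the second relation $\lambda_{k}^{ij} = \eta_0\, \lambda_{\unit}^{i\ibar}\, \lambda_{\ibar k}^{j}$ I would run the dual argument: insert \eqref{eq:ZA-constants} into the identity $\Delta = (\id{} \otimes \mu) \circ ((\Delta \circ \eta') \otimes \id{})$ — more precisely the coalgebra counterpart $\Delta = (\Delta \otimes \id{}) \circ \text{(unit-creation)}$ expressing $\Delta$ via $\eta$ and $\mu$ through the Frobenius relations — and compare structure constants, using Lemma~\ref{lem:phicomputation} to handle the $\Phi$ that relates $i^\ast \times i$ to $i^{\ast\ast} \times i^\ast$, and the normalisation identities \eqref{eq:pi}, \eqref{eq:phi}. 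The main obstacle I anticipate is bookkeeping: keeping the pivotal-structure isomorphisms $a_k$, the chosen isomorphisms $\pi_i$, and the $d_i d_j / (d_k D^2)$ normalisation factors straight across the two legs of the Deligne product, so that the scalar identities come out exactly as stated rather than off by a quantum-dimension ratio. None of the steps is conceptually hard; the care is entirely in the normalisations, and the cleanest route is to reuse the bases and conventions fixed in \eqref{eq:pi}–\eqref{eq:phi} and Lemma~\ref{lem:phicomputation} rather than recomputing from scratch.
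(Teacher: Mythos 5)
Your proposal is correct and follows essentially the same route as the paper: part~1 by evaluating (co)unitality on the ansatz \eqref{eq:ZA-constants}, part~2 from commutativity of $Z(A)$ together with naturality of the braiding (the paper handles the change of fusion basis by comparing against the commutative $Z(\unit)$ via Lemma~\ref{lem:diagonal}, which is the clean version of your ``tracking the bases'' step), and part~3 by inserting the ansatz into the Frobenius identities $\mu = ((\varepsilon\circ\mu)\otimes\id{})\circ(\id{}\otimes\Delta)$ and $\Delta = (\id{}\otimes\mu)\circ((\Delta\circ\eta)\otimes\id{})$. The $\Phi$/pivotal-structure bookkeeping you anticipate is not actually needed for this lemma.
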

\end{minipage}

\begin{proof}
Part 1 follows directly by evaluating the unitality and the counitality conditions for the morphisms in \eqref{eq:ZA-constants}. 

For part 2, 
let $i,j,k \in I$ with $N_{ij}^{~k}\neq 0$. From Lemma~\ref{lem:diagonal} we get 
    \begin{equation}
        r_k\circ \mu_{Z(A)}\circ (e_i\otimes e_j) = \lambda_{ij}^k\, r_k\circ \mu_{Z(\unit)}\circ (e_i\otimes e_j) ~.
    \end{equation}
Composing both sides of this equation with the braiding
$c_{j,i} : j \otimes i \to  i \otimes j$ 
and using naturality, we get
    \begin{align}
        r_k\circ \mu_{Z(A)} \circ c_{Z(A),Z(A)}\circ (e_j\otimes e_i) &= \lambda_{ij}^k\, r_k\circ \mu_{Z(\unit)} \circ c_{Z(\unit), Z(\unit)}\circ (e_j\otimes e_i)
        \nonumber\\ 
        &=\lambda_{ij}^k\, r_k\circ \mu_{Z(\unit)}\circ (e_j\otimes e_i)~.
    \end{align}
In the last step we used the commutativity of $Z(\unit)$. Making use of commutativity of $Z(A)$, \ie $\mu_{Z(A)}\circ c_{Z(A),Z(A)} = \mu_{Z(A)}$, finally implies part 2. 

In part 3, the first equality was already given at the end of the previous proof, and the second follows analogously by inserting \eqref{eq:ZA-constants} into 
    $\Delta = (\id{} \otimes \mu) \circ ((\Delta \circ \eta) \otimes \id{})$.
\end{proof}

\subsection{Sequence of isomorphisms}\label{sec:isosequence}

Given an object automorphism $f \in \Aut(Z(A))$, one can give an isomorphic (haploid commutative symmetric normalised-special modular invariant) Frobenius algebra 
    $\tilde Z \equiv f_\ast(Z(A))$.
Its underlying object is again $Z(A)$ but its structure morphisms are
\begin{align}
    \tilde{\mu} &= f\circ \mu_{Z(A)} \circ (f^{-1}\otimes f^{-1})~,  & \tilde{\eta} &= f \circ \eta_{Z(A)} ~,\nonumber\\
    \tilde{\Delta} &= (f\otimes f)\circ \Delta_{Z(A)} \circ f^{-1}~, & \tilde{\varepsilon} &= \varepsilon_{Z(A)} \circ f^{-1}~.
\end{align}
This is the unique Frobenius algebra structure such that $f : Z(A) \to \tilde Z$ is an isomorphism of Frobenius algebras.

The isomorphism $f$ is determined by invertible scalars $\{f_i\}$ as the underlying object of $Z(A)$ is $\bigoplus_{i\in I}i^\ast\times i$:
\begin{equation}
    f = \sum_{i\in I} f_i \, e_i \circ r_i ~.
\end{equation}
The new structure constants are then given by 
\begin{equation}
    \Tilde{\lambda}_{ij}^k = \frac{f_k}{f_i f_j}\lambda_{ij}^k
    ~~,\quad
        \Tilde{\lambda}_{k}^{ij} = \frac{f_i f_j}{f_k}\lambda_{k}^{ij}
    ~~,\quad
    \tilde\eta_0 = f_\unit \, \eta_0
    ~~,\quad
    \tilde\varepsilon_0 = f_\unit^{-1} \, \varepsilon_0
    ~.
\end{equation}
The new constants defined as above still obey the equations of Lemmata \ref{lem:someproperties} and \ref{lem:lambdaprop}.

We will find a sequence of such Frobenius algebra isomorphisms that take $Z(A)$ into $Z(\unit)$.  
In other words, we need to find (a sequence of) transformations $f_i$ such that $\tilde\eta_0 = \tilde\varepsilon_0=1$ and $\Tilde{\lambda}_{ij}^k =\Tilde{\lambda}^{ij}_k=1$ whenever 
$N_{ij}^{~k}\neq 0$. 

\subsubsection*{First Step}

Our first step will be to normalise the constants $\lambda_{i\unit }^i,\lambda_{\unit i}^i$ and $\lambda_{i\overline{i}}^\unit $. We do this by fixing $f_i$ such that $f_{\overline{i}}f_i = \lambda_{\unit \unit }^\unit  \lambda_{i\overline{i}}^\unit $ (for instance pick any square root $f_i = f_{\ibar} = \sqrt{\lambda_{\unit \unit }^\unit  \lambda_{i\ibar}^\unit }$) and fix $f_\unit = \lambda_{\unit \unit}^{\unit}$.
For example,
\begin{equation}
    \tilde\lambda_{i\overline{i}}^\unit 
    ~=~
    \frac{f_\unit}{f_i\,f_{\overline{i}}} \,\lambda_{i\overline{i}}^\unit 
    ~=~
    \frac{\lambda_{\unit \unit}^{\unit}}{\lambda_{\unit \unit }^\unit  \lambda_{i\overline{i}}^\unit} \,\lambda_{i\overline{i}}^\unit 
     ~=~ 1 ~.
\end{equation}
Assuming we applied this isomorphism, we may now
start with constants such that $\lambda_{i\unit }^i=\lambda_{\unit i}^i =1= \lambda_{i\overline{i}}^\unit $ for all $i\in I$. 
By Lemma~\ref{lem:someproperties}, this implies $\eta_0=\varepsilon_0=1$, as well as
\begin{equation}
\lambda_{k}^{ij} = \lambda_{\ibar k}^{j} ~.
\end{equation}
\ie we can raise or lower indices by conjugating the respective label. To avoid confusion, we will denote this algebra by $Z$, which is isomorphic to $Z(A)$ as a Frobenius algebra.

The above conditions on $\lambda_{ij}^k$, $\lambda^{ij}_k$, $\eta_0$, $\varepsilon_0$ are preserved by isomorphisms $f$ that satisfy
\begin{equation}\label{eq:fixed-step1}
    f_\unit = 1 \quad\text{and}\quad
    f_i f_{\ibar}= 1 ~~\text{for all $i \in I$} ~.
\end{equation}

\subsubsection*{Second Step}

To exploit the irreducibility of
the $V_g^{\catname C}$
for higher genus, it is convenient to introduce the notion of an $I$-fusion tree. 

\begin{definition}
\begin{itemize}
    \item A \textit{3-valent tree} is a tree graph, where each vertex is 3-valent with one incoming edge and two outgoing edges. 
    \item An \textit{$I$-fusion tree} is a 3-valent tree such that each edge is labelled by an element in $I$, and such that at each vertex $v$ the following condition is satisfied: if the incoming edge at $v$ is labelled $k$ and the two outgoing edges at $v$ are labelled $i$, $j$, then $N_{ij}^{~k} \neq 0$. 
    
    The outgoing edges of an $I$-fusion tree are ordered (we will label them $1,\dots,m$).

    \item Let $i,j_1,\dots, j_m \in I$. An $(i;j_1,\dots,j_m)$-fusion tree is an $I$-fusion tree such that the incoming edge is labelled by $i$ and the outgoing edges are labelled by $j_1,\dots, j_m$.
\end{itemize}    
\end{definition}

We stress that an $I$-fusion tree $\Omega$ is \textsl{not} a string diagram in $\catname C$. Namely, $\Omega$ only records labels in $I$ and does not include a specific morphism at each vertex.

Let $Z$ be a Frobenius algebra isomorphic to $Z(A)$ as a Frobenius algebra, and with structure constants $\lambda^{ij}_k$, etc., normalised as in the first step.
To a vertex $v$ of an $I$-fusion tree with incoming label $k$ and outgoing labels $i$, $j$ we assign  the number $\lambda(v) := \lambda_k^{ij}$. To the whole $I$-fusion tree we assign the product of the structure constants at each vertex,
\begin{equation}\label{eq:lam-def}
\lambda: \{I\mbox{-}\text{fusion trees}\} \longrightarrow \mathbbm{k}^\times
~~,\quad
\Omega 
    ~\longmapsto~
\lambda(\Omega) = \prod_{v\text{ vertex}} \lambda(v)~  .
\end{equation}

\begin{lemma}\label{lem:lambdaprop}
Let $i_1,\dots, i_g \in I$ and $\Omega$ be a $(\unit ;i_1,\ibar_1,\dots, i_g, \ibar_g)$-fusion tree.
Then
\begin{equation}\label{eq:lambdaprop}
      \lambda(\Omega) = 1~,
\end{equation}
independent of the choice of $i_1,\dots , i_g$ and $\Omega$. 
\end{lemma}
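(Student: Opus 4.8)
The plan is to prove Lemma~\ref{lem:lambdaprop} by reducing the statement for a general fusion tree to the statement for one particular, very simple tree, and then evaluating $\lambda$ on that tree using the genus-$g$ modular invariants. The first observation is that $\lambda(\Omega)$ actually does not depend on the shape of the tree $\Omega$, only on the multiset of outgoing labels $\{i_1,\ibar_1,\dots,i_g,\ibar_g\}$. To see this I would use the associativity of $\mu_{Z(A)}$ together with Lemma~\ref{lem:diagonal} (diagonality of the structure constants): any two $I$-fusion trees with the same incoming label and the same ordered tuple of outgoing labels are related by a finite sequence of ``F-moves'' (local re-bracketings of a 3-valent vertex pair), and associativity of the product of $Z$ translates into the identity $\sum_{\text{internal label}} \lambda(\text{tree before move}) = \sum \lambda(\text{tree after move})$ term by term, once one remembers that the corresponding identity holds for $Z(\unit)$ and that by Lemma~\ref{lem:diagonal} the constants are scalars (no multiplicity sums survive diagonally). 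Similarly, commutativity (Lemma~\ref{lem:someproperties}(2)) lets one permute the outgoing legs. Hence $\lambda(\Omega)$ depends only on the unordered multiset of outgoing labels, and in particular on $g$ and the $i_m$'s only through which simple objects occur.

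Next I would connect $\lambda(\Omega)$ to the modular invariant vector $C(Z)_g \in V^{\doublecat}(\Sigma_g)$ from \eqref{eq:definvariants}. Expanding $C(Z)_g$ in a fusion basis exactly as in the proof of Lemma~\ref{lem:diagonal} (using $\Delta^{(2g)}$ resolved through a chosen $(\unit;i_1,\ibar_1,\dots,i_g,\ibar_g)$-fusion tree, and Lemma~\ref{lem:phicomputation} to rewrite each $\Phi_{Z}$), one finds that the component of $C(Z)_g$ along the summand labelled $(i_1,\dots,i_g)$ is, up to the same universal prefactors that appear in $C(Z(\unit))_g$, equal to $\lambda(\Omega)$ times the corresponding component of $C(Z(\unit))_g$. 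Since $V_g^{\catname C}$ is irreducible, Corollary~\ref{cor:mcginvariants} gives $\dim V^{\doublecat}(\Sigma_g)^{\Mod_g}=1$, and by Lemma~\ref{lem:CZ1-nonzero} both $C(Z)_g$ and $C(Z(\unit))_g$ are nonzero modular invariants; hence $C(Z)_g = \lambda_g\, C(Z(\unit))_g$ for some $\lambda_g \in \mathbbm{k}^\times$. Comparing the $(i_1,\dots,i_g)$-components then yields $\lambda(\Omega) = \lambda_g$ — a single scalar, independent of the choice of labels — exactly matching the shape-independence established above.

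It remains to pin down $\lambda_g = 1$. For this I would evaluate the equality $C(Z)_g = \lambda_g\, C(Z(\unit))_g$ on the one summand where all labels are trivial, $i_m = \unit$ for all $m$: as in the proof of Lemma~\ref{lem:CZ1-nonzero} this component reduces, after the normalisations of the First Step ($\eta_0 = \varepsilon_0 = 1$, $\lambda_{i\ibar}^\unit = 1$), to $\varepsilon_{Z}\circ\eta_{Z} = D^2$ for $Z$ and likewise to $D^2$ for $Z(\unit)$, whence $\lambda_g = 1$. Equivalently, one can just take $\Omega$ to be the $(\unit;\unit,\dots,\unit)$-fusion tree itself: every vertex carries a factor $\lambda_{\unit}^{\unit\unit} = \varepsilon_0^{-1} = 1$ by Lemma~\ref{lem:someproperties}(1), so $\lambda(\Omega)=1$ directly, and shape-independence then propagates this to all $(\unit;i_1,\ibar_1,\dots,i_g,\ibar_g)$-fusion trees.

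The main obstacle I anticipate is the shape-independence step: one has to be careful that the associativity/commutativity constraints of $Z$, combined with the diagonality from Lemma~\ref{lem:diagonal}, really do force $\lambda(\Omega_1)=\lambda(\Omega_2)$ for trees differing by an F-move, rather than merely a summed identity $\sum_\ell \lambda(\Omega_1^\ell)=\sum_\ell \lambda(\Omega_2^\ell)$ over internal labels $\ell$. The point is that the analogous identity holds with all $\lambda$'s replaced by $1$ (that is, for $Z(\unit)$, whose F-symbols are the standard fusion $6j$-symbols), so subtracting shows the ``$\lambda$-deformed'' associativity data and the undeformed one satisfy the same linear relations; since the fusion category over which these $6j$-symbols live is rigid and the relevant Hom-spaces are the $\catname C(i\otimes j,k)$, a standard gauge argument shows the deformation is a coboundary on fusion trees — which is exactly the statement that $\lambda$ is multiplicative over a ``splitting'' of each leg and hence shape-independent. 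I would organise this as: (i) reduce to F-moves and leg-transpositions; (ii) observe the deformed and undeformed $6j$-data obey the same pentagon/hexagon-type relations; (iii) conclude shape-independence; (iv) evaluate on the all-trivial tree.
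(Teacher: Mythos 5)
Your overall strategy is the paper's: compare $C(Z)_g$ with $\lambda_g\,C(Z(\unit))_g$ using irreducibility and Lemma~\ref{lem:CZ1-nonzero}, then evaluate on the all-$\unit$ configuration to get $\lambda_g=1$. But there is a genuine gap in how you extract the individual $\lambda(\Omega)$ from that single vector equation. Your first paragraph claims that associativity and commutativity of $Z$, together with Lemma~\ref{lem:diagonal}, already force $\lambda(\Omega)$ to depend only on the multiset of outgoing labels. This is not established: an F-move changes the \emph{internal} edge label, so associativity only yields the summed identity $\sum_{p}\lambda^{ij}_p\lambda^{pk}_l\,(\cdots)=\sum_{q}\lambda^{jk}_q\lambda^{iq}_l\,(\cdots)$ over internal labels, exactly the obstacle you flag yourself. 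Your proposed repair --- that the deformed and undeformed $6j$-data ``obey the same relations'' and a ``standard gauge argument'' makes the deformation a coboundary on fusion trees --- is circular at this stage: showing that the $\lambda^{ij}_k$ are controlled by a (symmetric, hence coboundary) $2$-cocycle is precisely what the remainder of Section~\ref{sec:isosequence} is devoted to, and it uses the present lemma as input (via Lemmas~\ref{lem:adjointsubring}--\ref{lem:lamgrading}). Moreover, your second paragraph silently uses this unproven independence when it identifies ``the component of $C(Z)_g$ along the summand labelled $(i_1,\dots,i_g)$'' with a single scalar $\lambda(\Omega)$ times the $Z(\unit)$-component: that component is in fact a \emph{sum} over all internal labellings $\Omega$ of the chosen tree, each weighted by its own $\lambda(\Omega)$.

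The paper closes this gap without any F-move argument. Fix an arbitrary unlabelled $3$-valent tree $\Gamma$ with $2g$ leaves and expand $\Delta_Z^{(2g)}\circ\eta_Z=\bigoplus_\Omega\lambda(\Omega)\,D_\Omega$ over all $I$-labellings $\Omega$ of $\Gamma$ with root label $\unit$. The key observation is that the morphisms $D_\Omega$ for distinct $\Omega$ (same $\Gamma$, different edge labels, internal ones included) are \emph{linearly independent} --- one sees this by pairing with the dual graphs. Hence the single relation $C(Z)_g=\lambda_g\,C(Z(\unit))_g$ can be read off summand by summand and gives $\lambda(\Omega)=\lambda_g$ for every labelling $\Omega$ of $\Gamma$ individually; since $\Gamma$ was arbitrary, this covers every $(\unit;i_1,\ibar_1,\dots,i_g,\ibar_g)$-fusion tree, and the all-$\unit$ labelling gives $\lambda_g=1$. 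So your proof becomes correct if you delete the first paragraph entirely and replace the component-comparison in the second by this linear-independence argument.
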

\begin{proof}
By irreducibility of the $V_g^{\catname C}$ and
by Lemma~\ref{lem:CZ1-nonzero} there is a $\lambda_g\in \mathbbm{k}^\times$ such that 
\begin{equation}\label{eq:lambdag}
C(Z)_g = \lambda_g\, C(Z(\unit))_g~.
\end{equation}
Fix a 3-valent tree $\Gamma$ with $2g$ leaves. By decorating each vertex with the coproduct, each such tree gives a realisation of the iterated coproduct $\Delta^{(2g)} : Z \to Z^{\otimes 2g}$. Using labellings of $\Gamma$ by $I$, we get a direct sum decomposition
\begin{equation}
\Delta_Z^{(2g)} \circ \eta_Z = \bigoplus_{\Omega} \lambda(\Omega)\,D_\Omega~ . 
\end{equation}
Here, the direct sum runs over $I$-fusion trees $\Omega$ with underlying unlabelled tree $\Gamma$, where the unique incoming edge is labelled by $\unit$.
The factor $\lambda(\Omega)$ is the product of structure constants as defined in \eqref{eq:lam-def}.
$D_\Omega$ is the summand of $\Delta_Z^{(2g)} \circ \eta_Z$ 
where for an edge labelled $k$ the corresponding tensor factor $Z$ is projected to $k^* \times k$.
The following example illustrates the procedure for $g=2$:
\begin{equation}
\Gamma = \boxpic{0.7}{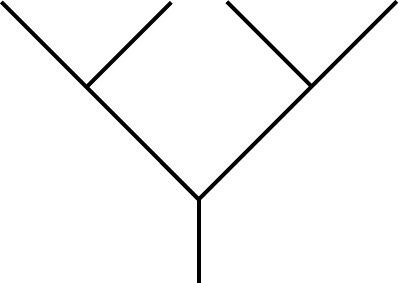}{}
\quad
\Omega = \boxpic{0.7}{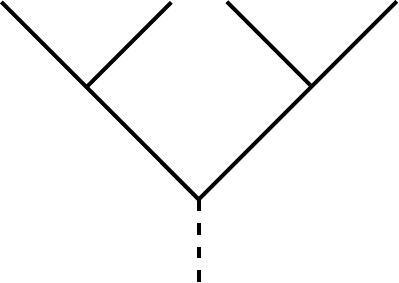}{
\put (0,75) {$i_1$}
\put (35,75) {$\ibar_1$}
\put (55,75) {$i_2$}
\put (95,75) {$\ibar_2$}
\put (25,25) {$k$}
\put (70,25) {$\kbar$}
}
\quad
D_\Omega = \boxpic{0.7}{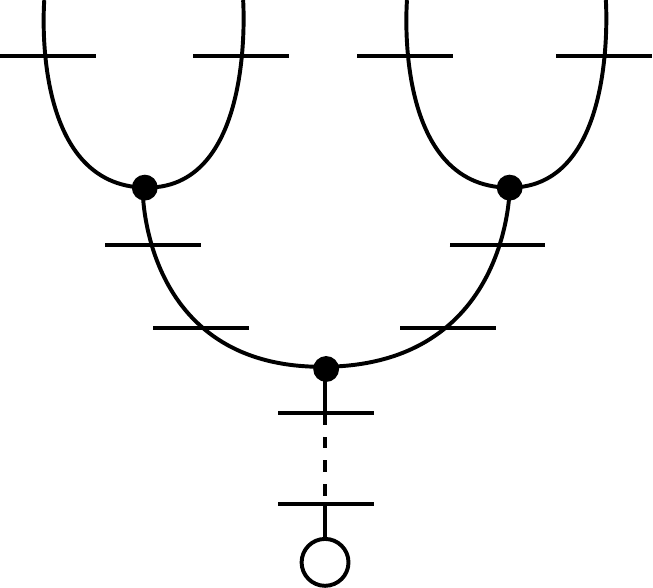}{
\put (-5,94) {\small $i_1^\ast\times i_1$}
\put (25,94) {\small $\ibar_1^\ast\times \ibar_1$}
\put (55,94) {\small $i_2^\ast\times i_2$}
\put (85,94) {\small $\ibar_2^\ast\times \ibar_2$}
\put (1,43) {\small $k^\ast\times k$}
\put (76,43) {\small $\kbar^\ast\times \kbar$}
\put (51,18) {\small $\unit\times \unit$}
\put (60,29) {\small $Z(\unit)$}
\put (34,60) {\small $Z(\unit)$}
\put (88,60) {\small $Z(\unit)$}
}
\end{equation}
Here, the coproduct is that of $Z(\unit)$ as given in \eqref{eq:Z1-product-coproduct}, for which all structure constants are $1$.

The important point to realise is that the $D_\Omega$ are linearly independent for the different choices of $\Omega$ (but for fixed $\Gamma$).
This can be seen for example by composing with the corresponding dual graph with in- and outgoing edges exchanged, which provides a non-degenerate pairing.

We can thus evaluate \eqref{eq:lambdag} summand by summand.
For $Z$ we get a factor $\lambda(\Omega)$ as in \eqref{eq:lam-def}, while for $Z(\unit)$ the structure constants are all $1$. Altogether we obtain, for all $I$-fusion trees $\Omega$ with underlying 3-valent tree $\Gamma$,
\begin{equation}
\lambda(\Omega)\,D_\Omega = \lambda_g\, D_\Omega~ . 
\end{equation}
Finally, to compute $\lambda_g$, take the $I$-fusion tree $\Omega$ where all edges are labelled by $\unit$. Since $\lambda_{\unit}^{\unit\unit}=1$, this results in $\lambda(\Omega)=1$.
\end{proof}

The next lemma is the first place where the universal grading group becomes important. Namely, the elements in the neutral component $R_{ad}$ are precisely those that ``can be created by handles''
(cf.\ Figure~\ref{fig:handlebodycoupon}). This property can be used to set the corresponding structure constants to $1$:

\begin{lemma}\label{lem:adjointsubring}
There exist $f_i$ satisfying~\eqref{eq:fixed-step1} such that $\Tilde{\lambda}^{ij}_k =1$ 
for all $i,j,k \in I_{ad}$ with $N_{ij}^{~k} \neq 0$.
\end{lemma}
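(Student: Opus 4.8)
The idea is to construct the scalars $f_i$ inductively along the filtration \eqref{eq:filtration} of the adjoint subring, using Lemma~\ref{lem:lambdaprop} as the key input. For $i \in I_{ad}$ recall the integer $n(i)$ from Section~\ref{sec:universalgroup}: it records the minimal number of "handle pairs" $m\overline{m}$ needed to produce $b_i$ inside a product $b_{m_1}b_{\overline{m}_1}\cdots b_{m_{n}}b_{\overline{m}_{n}}$. I would define $f_i$ by induction on $n(i)$, setting $f_\unit = 1$ (so $n=0$ is handled), and at each stage using the freedom in choosing $f_i$ for the newly reachable simple objects to force $\tilde\lambda^{ij}_k = 1$ for the relevant vertices. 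Throughout, one maintains the constraint \eqref{eq:fixed-step1}, i.e.\ $f_i f_{\overline i} = 1$; note this is consistent because $n(i) = n(\overline i)$, so $i$ and $\overline i$ enter the induction at the same stage, and one picks $f_{\overline i} := f_i^{-1}$.

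The engine is the following observation. Suppose $i,j,k \in I_{ad}$ with $N_{ij}^{~k}\neq 0$ and, say, $n(k) = \max\{n(i),n(j),n(k)\}$ at the moment we wish to fix this vertex. Build an $I$-fusion tree $\Omega$ whose root is labelled $\unit$ and whose leaves are a sequence $m_1,\overline m_1,\dots$ realising $n(k)$ handle pairs for $k$, arranged so that one internal edge carries the label $k$ and the two children of that edge carry $i$ and $j$; below $i$ and $j$ we similarly graft subtrees realising them from handle pairs. By Lemma~\ref{lem:lambdaprop}, after the first-step normalisation the total product $\lambda(\Omega) = 1$, and $\lambda(\Omega)$ factors as $\lambda^{ij}_k$ times products of structure constants attached to vertices that have already been normalised to $1$ at earlier induction stages (this is where raising/lowering of indices via $\lambda^{ij}_k = \lambda^{~j}_{\overline i k}$ from Step~1 and Lemma~\ref{lem:someproperties}\,(3) is used to rewrite all vertex weights in the form $\lambda^{\bullet\bullet}_\bullet$). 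Hence $\lambda^{ij}_k$ is itself already forced; one then only needs to check that a choice of $f_i$ (for those of $i,j,k$ not yet assigned) can be made so that the transformed constant becomes $1$, and that this does not clash with constraints imposed by other vertices at the same stage — here one uses transitivity of $R_{ad}$ and the fact that the gauge transformation acts on $\lambda^{ij}_k$ by the cocycle-type factor $f_if_j/f_k$, so the obstruction to simultaneously trivialising all of them lives in a coboundary group that is killed precisely because Lemma~\ref{lem:lambdaprop} gives consistency around every closed loop of such relations.

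The main obstacle I anticipate is the bookkeeping of the induction: one must show that the constants $\lambda^{ij}_k$ with all three indices in $I_{ad}$ — not just those literally "visible" as a single vertex in a tree all of whose other edges are already normalized — are consistently trivializable, i.e.\ that the various equations $\tilde\lambda^{ij}_k=1$ impose mutually compatible conditions on the $f_i$. Equivalently, one must argue that the assignment $i\mapsto f_i$ is well defined independently of which fusion tree one uses to reach $i$, and this well-definedness is exactly what Lemma~\ref{lem:lambdaprop} guarantees (two different trees with the same leaves and root differ by a sequence of moves whose net effect on $\lambda$ is trivial). Making this precise — phrasing the collection $\{\lambda^{ij}_k\}_{i,j,k\in I_{ad}}$ as (the exponential of) a $2$-cocycle on the groupoid of fusion paths in $R_{ad}$ and invoking Lemma~\ref{lem:lambdaprop} as the statement that it is a coboundary — is the technical heart; once set up, the existence of the desired $f_i$ (and its compatibility with \eqref{eq:fixed-step1}, using $n(i)=n(\overline i)$ and commutativity $\lambda^k_{ij}=\lambda^k_{ji}$) is then formal.
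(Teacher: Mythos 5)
Your overall strategy --- realise each $i\in I_{ad}$ by a fusion tree built from handle pairs $m,\overline m$ and feed composite trees with root $\unit$ into Lemma~\ref{lem:lambdaprop} --- is the right one, and your observation that $n(i)=n(\ibar)$ keeps the construction compatible with $f_i f_{\ibar}=1$ is sound. But the central claim of your ``engine'' is false: in the tree $\Omega$ you build, the vertices of the subtrees resolving $i$, $j$ and $\kbar$ into handle pairs carry structure constants of the form $\lambda^{m\,\overline m}_{l}$ with $m\in I$ \emph{arbitrary}, not in $I_{ad}$. These lie outside the scope of your induction over $I_{ad}$ and are not equal to $1$ at this stage (they are only dealt with in the later steps of the overall argument, and even then only via the grading group). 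Hence $\lambda^{ij}_k$ is \emph{not} ``already forced'' by Lemma~\ref{lem:lambdaprop}; what is forced is only the product of $\lambda^{ij}_k$ with these uncontrolled subtree contributions. Your fallback --- choose the $f_i$ stage by stage so that the gauge-transformed constants become $1$, and argue that the resulting system of constraints is consistent via a groupoid-cohomology statement --- is precisely the part you leave unproved.

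The paper's proof avoids both the induction and the consistency question by absorbing the uncontrolled subtree contributions into the \emph{definition} of $f_i$: fix once and for all, for each $i\in I_{ad}$, a single $(i;m_1,\overline m_1,\dots,m_n,\overline m_n)$-fusion tree $\Omega_i$, and set $f_i:=\lambda(\Omega_i)$. Lemma~\ref{lem:lambdaprop} applied to the tree obtained by joining $\Omega_i$ and $\Omega_{\ibar}$ over a root vertex $\unit\to(i,\ibar)$ gives $f_i f_{\ibar}=1$, and applied to the tree joining $\Omega_{\kbar}$, $\Omega_i$, $\Omega_j$ over the vertices $\unit\to(\kbar,k)$ and $k\to(i,j)$ gives $\lambda(\Omega_{\kbar})\,\lambda(\Omega_i)\,\lambda(\Omega_j)\,\lambda^{ij}_k=1$, i.e.\ $\lambda^{ij}_k=f_k/(f_i f_j)$ and hence $\tilde\lambda^{ij}_k=1$. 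Since only one fixed tree per label is ever used, no well-definedness or compatibility argument is needed. If you want to salvage your approach, the fix is exactly this: do not try to make the individual subtree vertices trivial, but let $f_i$ \emph{be} the tree value $\lambda(\Omega_i)$.
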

\begin{proof}
Recall the filtration of the adjoint subring given in~\eqref{eq:filtration}. 
For $b_i \in R^{(n)}_{ad}$, $i \neq \unit$ there are
$m_1,\dots, m_{n}$ such that $b_i$ is contained $b_{m_{1}}b_{\overline m_{1}}\dots b_{m_{n}}b_{\overline m_{n}}$. In other words, 
there exists a $(i;m_1,\overline m_1, \dots, m_{n},\overline m_{n})$-fusion tree
\begin{equation}
\Omega_i ~=~
    \boxpic{0.7}{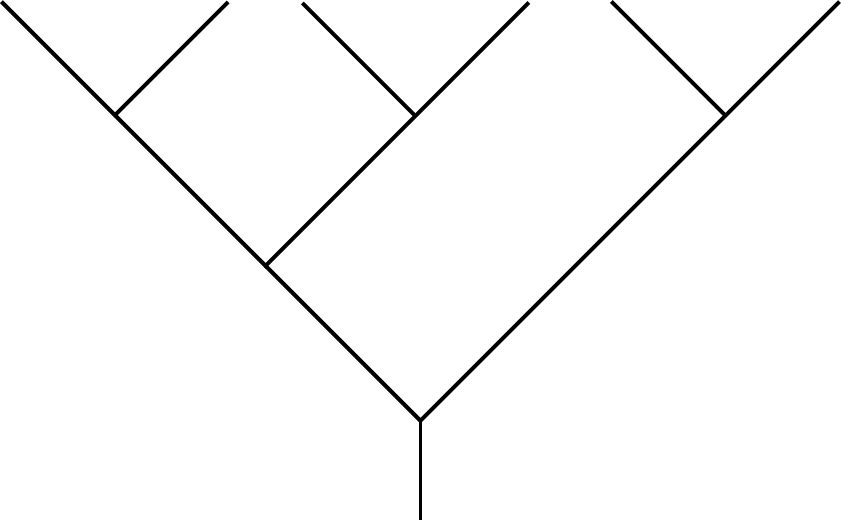}{
        \put (-5,63) {$m_1$}
        \put (22,63) {$\overline m_1$}
        \put (33,63) {$m_2$}
        \put (58,63) {$\overline m_2$}
        \put (70,63) {$m_{n}$}
        \put (98,63) {$\overline m_{n}$}
        \put (14,37) {$k_1$}
        \put (43,37) {$k_2$}
        \put (82,37) {$k_{n}$}
        \put (52,0) {$i$}
        \put (30,22) {$d_3$}
        \put (42,10) {$d_n$}
        \put (57,37) {$\cdots$}
        \put (36.5,15) {\small$\ddots$}
        }
        \rule[-4em]{0em}{10em}
        \quad .
    \end{equation}
We set $f_i = \lambda(\Omega_i)$. To check that the condition $f_i f_{\ibar} = 1$ in~\eqref{eq:fixed-step1} is satisfied, apply Lemma~\ref{lem:lambdaprop} to the fusion tree
\begin{equation}\label{eq:2omegas}
    \boxpic{0.7}{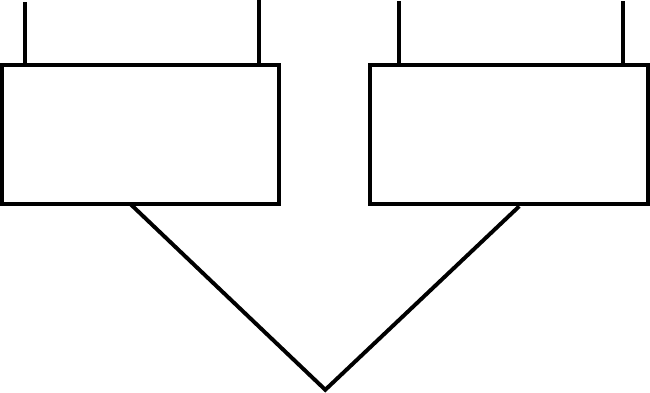}{
    \put (16,36) {$\Omega_i$}
    \put (74,36) {$\Omega_{\ibar}$}
    \put (16,55) {$\dots$}
    \put (74,55) {$\dots$}
    \put (28,9) {$i$}
    \put (70,9) {$\ibar$}}\quad. 
\end{equation}
For $i,j,k \in I_{ad}$ and their associated fusion trees $\Omega_i,\Omega_j$ and $\Omega_{\kbar}$, consider the graph
\begin{equation}\label{eq:3omegas}
    \boxpic{0.7}{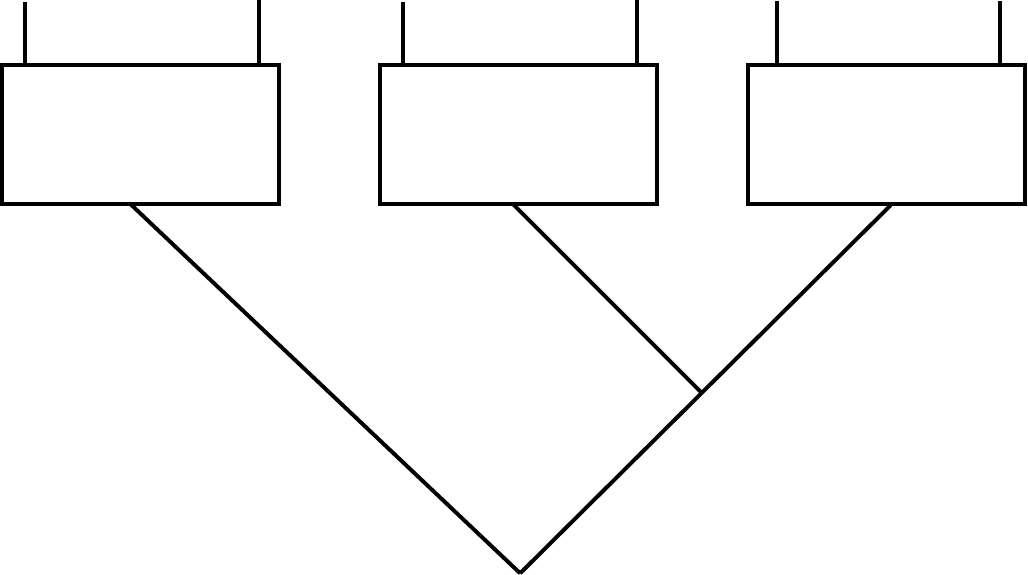}{
    \put (11,41) {$\Omega_{\kbar}$}
    \put (47,41) {$\Omega_i$}
    \put (83,41) {$\Omega_j$}
    \put (10,53) {$\dots$}
    \put (46,53) {$\dots$}
    \put (83,53) {$\dots$}
    \put (35,4) {$\kbar$}
    \put (60,4) {$k$}
    \put (57,22) {$i$}
    \put (77,22) {$j$}
    }\quad. 
\end{equation}
Lemma~\ref{lem:lambdaprop} gives the condition
\begin{equation}
    \lambda(\Omega_{\kbar}) \, \lambda(\Omega_i)\,\lambda(\Omega_j) \, \lambda^{ij}_k ~=~1 ~.
\end{equation}
Substituting $f_i = \lambda(\Omega_i)$ and recalling that $f_k f_{\kbar} = 1$ finally gives $\lambda_{k}^{ij} = \frac{f_k}{f_i f_j}$, \ie $\Tilde{\lambda}_{k}^{ij} = 1$.
\end{proof}

We will need to know how the $\lambda(\Omega_i)$ change in the new normalisation given by the $f_i$. We have 
\begin{equation}\label{eq:lam-Omega-renorm}
    \tilde\lambda(\Omega_i) ~=~ \frac{f_{m_1}f_{\overline m_1} \cdots f_{m_n} f_{\overline m_n}}{f_i} \lambda(\Omega_i) ~=~1 ~,
\end{equation}
since $f_m f_{\overline m}=1$ and $f_i = \lambda(\Omega_i)$.

Note that in the proof of Lemma~\ref{lem:adjointsubring} we have only used the irreducibility of $V_g^{\catname C}$ up to $g =3 N$, where $N$ was defined in Section~\ref{sec:universalgroup} to be the maximal degree in the filtration of $R_{ad}$. Below we will need to go up to $g=3N+2$, see Remark~\ref{rem:maximal-g}.

\medskip

Let $i,j,k \in I$ be such that $N_{ij}^{~k}\neq 0$. At this point we have achieved $\lambda^{ij}_k=1$ whenever at least one of $i,j,k$ is given by $\unit$ (Step 1), and $\lambda^{ij}_k=1$ for $i,j,k \in I_{ad}$ (Step 2). We are still free to choose all $f_i$ with $i \notin I_{ad}$, subject to \eqref{eq:fixed-step1}.
Recall that in the proof of Lemma~\ref{lem:adjointsubring} we fixed a fusion graph $\Omega_i$ for each $i \in I_{ad}$, and that by \eqref{eq:lam-Omega-renorm} we have in the new normalisation:
\begin{equation}\label{eq:lam-Omega-i-is-1}
    \lambda(\Omega_i) = 1 \quad \text{for} ~~ i \in I_{ad}~.
\end{equation}

\subsubsection*{Third Step}

The following lemma is an extension of Lemma~\ref{lem:lambdaprop} to allow any reordering of the outgoing labels. 
\begin{lemma}\label{lem:lambdaprop-perm}
Let $i_1,\dots, i_g \in I$, $\sigma \in S_{2g}$ and $\Omega$ be a $(\unit; (i_1,\ibar_1 \dots, i_g, \ibar_g).\sigma)$-fusion tree where the permutation $\sigma$ acts by changing the order of the $2g$
outgoing labels accordingly. Then $\lambda (\Omega) = 1$.
\end{lemma}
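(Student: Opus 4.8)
The plan is to reduce the statement to adjacent transpositions and then transport the value $\lambda=1$ across a single swap, using cocommutativity of $Z$ together with the fact that $\lambda(\Omega)$ does not depend on the shape of the underlying tree. Since $S_{2g}$ is generated by the adjacent transpositions $\tau_m=(m\,m{+}1)$, I would write $\sigma$ as a product of $\ell(\sigma)$ such transpositions with $\ell(\sigma)$ minimal and induct on $\ell(\sigma)$. The base case $\ell(\sigma)=0$, in which the leaf sequence is the unpermuted one $(i_1,\ibar_1,\dots,i_g,\ibar_g)$, is exactly Lemma~\ref{lem:lambdaprop}. For the inductive step write $\sigma=\sigma'\tau_m$ with $\ell(\sigma')=\ell(\sigma)-1$; the leaf sequences associated with $\sigma$ and with $\sigma'$ then differ only by interchanging positions $m$ and $m{+}1$, and by the inductive hypothesis $\lambda(\Omega')=1$ for \emph{every} $(\unit;(i_1,\ibar_1,\dots,i_g,\ibar_g).\sigma')$-fusion tree $\Omega'$.

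I would first treat one convenient underlying tree shape $\Gamma_0$, namely one in which the leaves in positions $m$ and $m{+}1$ are the two children of a common vertex (such a $\Gamma_0$ clearly exists, and since the multiset of leaf labels equals $\{i_1,\ibar_1,\dots,i_g,\ibar_g\}$ it admits labellings whose incoming edge is $\unit$). Given any such labelled tree $\Omega_0$, interchanging the labels of positions $m$ and $m{+}1$ produces a labelling $\Omega_0'$ of the same $\Gamma_0$ whose leaf sequence is the one associated with $\sigma'$, and the only vertex constant that changes is the one at the common parent, which passes from $\lambda^{ab}_c$ to $\lambda^{ba}_c$. Since $Z(A)$ is commutative symmetric Frobenius it is cocommutative; combined with the diagonal, multiplicity-independent form of the structure constants from Lemma~\ref{lem:diagonal} and the fact that $Z(\unit)$ is described by the same string diagrams with all constants equal to $1$, this forces $\lambda^{ab}_c=\lambda^{ba}_c$. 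Hence $\lambda(\Omega_0)=\lambda(\Omega_0')=1$, the last equality by the inductive hypothesis.

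It then remains to pass from the single shape $\Gamma_0$ to an arbitrary underlying tree $\Gamma$ carrying the same (now arbitrary, possibly non-paired) leaf sequence, and this is the crux of the argument. Decorating the vertices of a fixed $\Gamma$ with the coproduct of $Z$ and decomposing into simple components gives $\Delta^{(2g)}_Z\circ\eta_Z=\bigoplus_{\Omega}\lambda(\Omega)\,D^{\Gamma}_{\Omega}$, where the sum runs over labellings $\Omega$ of $\Gamma$ with incoming edge $\unit$ and $D^{\Gamma}_{\Omega}$ is the corresponding morphism built from the coproduct of $Z(\unit)$ (all of whose structure constants are $1$). For fixed leaf labels the $D^{\Gamma}_{\Omega}$ are linearly independent — one pairs them against the dual tree built from the product of $Z(\unit)$, exactly as in the proof of Lemma~\ref{lem:lambdaprop} — so $\{D^{\Gamma}_{\Omega}\}$ and $\{D^{\Gamma_0}_{\Omega_0}\}$ are two bases of one and the same space, related by an invertible $F$-matrix $D^{\Gamma_0}_{\Omega_0}=\sum_{\Omega}F_{\Omega_0\Omega}\,D^{\Gamma}_{\Omega}$. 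Coassociativity of $Z(\unit)$ then yields $\sum_{\Omega_0}F_{\Omega_0\Omega}=1$ for every $\Omega$, while coassociativity of $Z$ yields $\lambda(\Omega)=\sum_{\Omega_0}F_{\Omega_0\Omega}\,\lambda(\Omega_0)$; together with $\lambda(\Omega_0)=1$ from the previous step this gives $\lambda(\Omega)=1$ for all labellings of $\Gamma$, completing the induction.

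The main obstacle I anticipate is precisely this last step: making the linear independence of the fusion-tree vectors $D^{\Gamma}_{\Omega}$ and the change-of-basis relation rigorous enough that coassociativity of $Z$ and of $Z(\unit)$ jointly pin $\lambda(\Omega)$ to $1$ independently of the tree shape; the remaining bookkeeping (the reduction to adjacent transpositions, the existence of $\Gamma_0$, and the cocommutativity identity) should be routine. I note in particular that, beyond the irreducibility already invoked in the proof of Lemma~\ref{lem:lambdaprop}, this lemma requires no further input from the mapping class group.
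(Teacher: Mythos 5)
Your proof is correct in substance but takes a genuinely different route from the paper's. The paper does not induct over transpositions: it chooses a $2g$-braid $\beta_{2g}$ lifting $\sigma$, uses cocommutativity of $Z$ and $Z(\unit)$ once, globally, to write $\Delta^{(2g)}\circ\eta=\beta_{2g}\circ\Delta^{(2g)}\circ\eta=\bigoplus_\Omega\lambda(\Omega)\,\beta_{2g}\circ D_\Omega$, and then reruns the comparison $C(Z)_g=C(Z(\unit))_g$ from Lemma~\ref{lem:lambdaprop} verbatim; the permuted fusion trees now appear among the linearly independent terms and $\lambda(\Omega)=1$ is read off in one step. Your argument instead derives the permuted statement purely algebraically from the unpermuted one: the induction over adjacent transpositions, the local identity $\lambda^{ab}_c=\lambda^{ba}_c$, and the passage between tree shapes use only (co)commutativity, coassociativity, Lemma~\ref{lem:diagonal} and the linear independence of the $D_\Omega$. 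This is longer, but it makes explicit that the lemma is an algebraic consequence of Lemma~\ref{lem:lambdaprop}; the paper's version is shorter at the price of re-invoking the same genus-$g$ identity $C(Z)_g=C(Z(\unit))_g$, so neither route consumes a stronger hypothesis.

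One technical point should be repaired. The change-of-shape step should not be phrased via an invertible $F$-matrix relating $\{D^{\Gamma_0}_{\Omega_0}\}$ to $\{D^{\Gamma}_{\Omega}\}$: each $D_\Omega$ already contains the sum over fusion multiplicities at every vertex (cf.\ \eqref{eq:Z1-product-coproduct}), so when some $N_{ij}^{~k}>1$ the two families, while each linearly independent, need not span the same subspace of the relevant Hom-space (the number of admissible internal labellings can differ between tree shapes), and the expansion $D^{\Gamma_0}_{\Omega_0}=\sum_\Omega F_{\Omega_0\Omega}\,D^\Gamma_\Omega$ need not exist. The step goes through more directly without it: once $\lambda(\Omega_0)=1$ for all labellings of $\Gamma_0$ with the given leaf sequence, the corresponding component of $\Delta_Z^{(2g)}\circ\eta_Z$ equals that of $\Delta_{Z(\unit)}^{(2g)}\circ\eta_{Z(\unit)}$; decomposing this one morphism along $\Gamma$ using coassociativity and comparing coefficients of the linearly independent $D^\Gamma_\Omega$ then gives $\lambda(\Omega)=1$ for all labellings of $\Gamma$. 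With this adjustment your argument is complete.
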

\begin{proof}
Let $\beta_{2g}$ be any $2g$-braid, whose underlying permutation is $\sigma \in S_{2g}$. Since $Z$ and $Z(\unit)$ are cocommutative, we have $\beta_{2g} \circ \Delta^{(2g)} = \Delta^{(2g)}$ for both of them. 

We proceed as in the proof of Lemma~\ref{lem:lambdaprop} by expressing $\Delta_Z\circ \eta_Z$ as a direct sum where such fusion trees appear. Using cocommutativity of $Z$ and $Z(\unit)$, we get a direct sum decomposition
\begin{equation}
\Delta_Z^{(2g)} \circ \eta_Z = \beta_{2g}\circ \Delta_Z^{(2g)} \circ \eta_Z = \bigoplus_{\Omega} \lambda(\Omega)\,\beta_{2g} \circ D_\Omega~ 
~~,\quad
\Delta_{Z(\unit)}^{(2g)} \circ \eta_{Z(\unit)} = 
\bigoplus_{\Omega} \beta_{2g} \circ D_\Omega~ .
\end{equation}
where the direct sum is over $I$-fusion trees $\Omega$. 
Next, insert this into the definition of $C(Z)_g$ and use $C(Z)_g = C(Z(\unit))$ as in the proof of Lemma~\ref{lem:lambdaprop}. Comparing linearly independent terms gives
\begin{equation}
    \lambda(\Omega) = 1
\end{equation}
for any $(\unit; (i_1,\ibar_1 \dots, i_g, \ibar_g).\sigma)$-fusion tree $\Omega$.
\end{proof}

Using Lemma~\ref{lem:lambdaprop-perm}, one can deduce from the fusion tree 
\begin{equation}\label{eq:lam-inv-fig}
    \Omega = \boxpic{0.7}{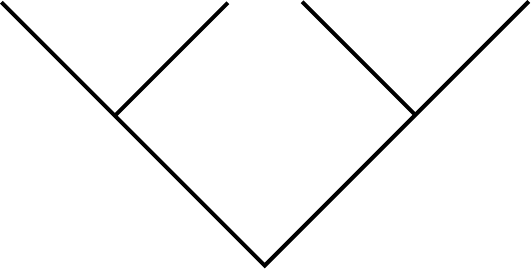}{
    \put (-2,52) {$i$}
    \put (38,52) {$j$}
    \put (54,52) {$\ibar$}
    \put (95,52) {$\jbar$}
    \put (28,9) {$k$}
    \put (70,9) {$\kbar$}} 
\end{equation}
the equality 
\begin{equation}\label{eq:lam-inv}
    \lambda_{\kbar}^{\ibar\jbar} = (\lambda_k^{ij})^{-1}~. 
\end{equation}

\begin{lemma}\label{lem:adj-g-lamprop}
Let $k,l \in I_g$ and $i,j \in I_{ad}$ such that $N_{ik}^l, N_{jk}^l \neq 0$. Then, $\lambda_{l}^{ik} = \lambda_{l}^{jk}$.
\end{lemma}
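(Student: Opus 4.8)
The plan is to obtain the claimed equality from a single application of Lemma~\ref{lem:lambdaprop-perm} to a suitably built $(\unit;\dots)$-fusion tree in which the structure constant $\lambda_l^{ik}$ occurs at one vertex and the inverse of $\lambda_l^{jk}$ at another, while all remaining vertex weights multiply to $1$. Using \eqref{eq:lam-inv} in the form $\lambda_{\overline{l}}^{\jbar\kbar}=(\lambda_l^{jk})^{-1}$, it suffices to produce an $I$-fusion tree $\Omega$ rooted at $\unit$ whose leaves form a permuted doubled list $x_1,\overline{x}_1,\dots,x_g,\overline{x}_g$ and with $\lambda(\Omega)=\lambda_l^{ik}\,\lambda_{\overline{l}}^{\jbar\kbar}$; then Lemma~\ref{lem:lambdaprop-perm} forces $\lambda(\Omega)=1$, hence $\lambda_l^{ik}=\lambda_l^{jk}$.

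To construct $\Omega$, first I would form its core: a root vertex with incoming edge $\unit$ and outgoing edges $l,\overline{l}$ (admissible since $\unit\subset l\otimes\overline{l}$, with weight $\lambda_{\unit}^{l\overline{l}}=1$ by Step~1), then off the outgoing $l$-edge a vertex with outgoing edges $i,k$ (admissible since $N_{ik}^{~l}\neq 0$, with weight $\lambda_l^{ik}$), and off the outgoing $\overline{l}$-edge a vertex with outgoing edges $\jbar,\kbar$ (admissible since $N_{\jbar\kbar}^{~\overline{l}}=N_{jk}^{~l}\neq 0$, with weight $\lambda_{\overline{l}}^{\jbar\kbar}$). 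At this point the leaves $i,k,\jbar,\kbar$ need not come in dual pairs, so next I would use that $I_{ad}=I_e$ is closed under duals, whence $i,\jbar\in I_{ad}$, and graft onto the $i$-leaf a fusion tree $\Omega_i$ and onto the $\jbar$-leaf a fusion tree $\Omega_{\jbar}$, where for $m\in I_{ad}$ with $m\neq\unit$ we take $\Omega_m$ from the proof of Lemma~\ref{lem:adjointsubring} (its leaves $m_1,\overline{m}_1,\dots$ come in dual pairs, and $\lambda(\Omega_m)=1$ in the present normalisation by \eqref{eq:lam-Omega-i-is-1}), while for $m=\unit$ we take $\Omega_\unit$ to be the single vertex $\unit\to\unit,\unit$ (leaves $\unit,\unit$, weight $\lambda_{\unit}^{\unit\unit}=1$ by Step~1). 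The resulting graph is an $I$-fusion tree rooted at $\unit$; its leaf multiset is the union of the dual pairs produced by $\Omega_i$ and $\Omega_{\jbar}$ together with $\{k,\kbar\}$, hence a permuted doubled list, and by \eqref{eq:lam-def}
\begin{equation*}
\lambda(\Omega)=\lambda_{\unit}^{l\overline{l}}\cdot\lambda_l^{ik}\cdot\lambda_{\overline{l}}^{\jbar\kbar}\cdot\lambda(\Omega_i)\cdot\lambda(\Omega_{\jbar})=\lambda_l^{ik}\,\lambda_{\overline{l}}^{\jbar\kbar}~.
\end{equation*}

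Lemma~\ref{lem:lambdaprop-perm} then gives $\lambda(\Omega)=1$, so $\lambda_l^{ik}\,(\lambda_l^{jk})^{-1}=1$, i.e.\ $\lambda_l^{ik}=\lambda_l^{jk}$ (and if $i=j$ there is nothing to prove). I expect the only genuine difficulty to be arranging the tree so that three requirements hold at once: (i) every vertex is admissible, which is exactly what $N_{ik}^{~l},N_{jk}^{~l}\neq 0$ together with rigidity of $\catname C$ provide; (ii) all vertex weights other than $\lambda_l^{ik}$ and $\lambda_{\overline{l}}^{\jbar\kbar}$ multiply to $1$, which is where Step~1 and Step~2 (via $\lambda(\Omega_i)=\lambda(\Omega_{\jbar})=1$) are used; and (iii) the leaves assemble into a doubled list so that Lemma~\ref{lem:lambdaprop-perm} applies, which is precisely what grafting the ``handle trees'' $\Omega_i,\Omega_{\jbar}$ accomplishes, using crucially that $i,j\in I_{ad}$.
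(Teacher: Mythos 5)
Your proposal is correct and follows essentially the same route as the paper: the paper's proof uses exactly this tree (root $\unit\to l,\overline{l}$, then $l\to i,k$ and $\overline{l}\to\kbar,\jbar$, with $\Omega_i$ and $\Omega_{\jbar}$ grafted onto the $i$- and $\jbar$-leaves), applies Lemma~\ref{lem:lambdaprop} together with \eqref{eq:lam-Omega-i-is-1} and \eqref{eq:lam-inv}, and concludes identically. The only cosmetic difference is that the paper orders the outgoing edges of the $\overline{l}$-vertex as $\kbar,\jbar$ so that the leaves already form consecutive dual pairs and the unpermuted Lemma~\ref{lem:lambdaprop} suffices, whereas your ordering $\jbar,\kbar$ requires the permuted version Lemma~\ref{lem:lambdaprop-perm} --- both are available at this stage, so this is immaterial.
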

\begin{proof}
Recall the fusion trees $\Omega_i$ we picked for each $i \in I_{ad}$ in Step 2. 
Consider the fusion tree
\begin{equation}
\boxpic{0.7}{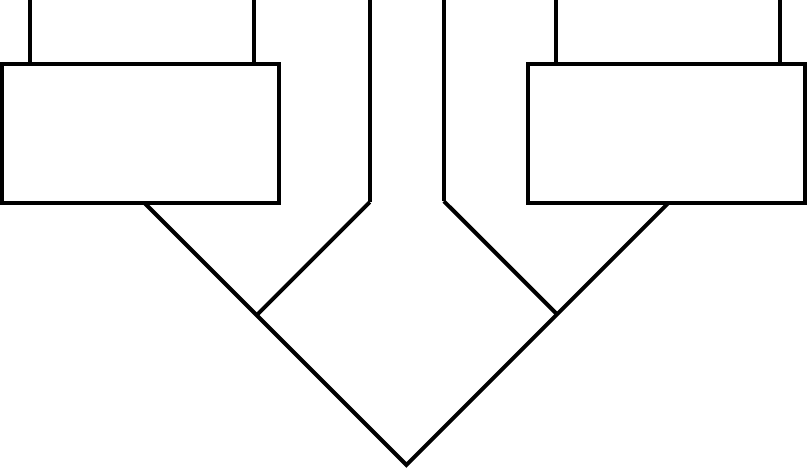}{
    \put (14,39) {$\Omega_i$}
    \put (79,39) {$\Omega_{\jbar}$}
    \put (13,55) {$\dots$}
    \put (78,55) {$\dots$}
    \put (22,20) {$i$}
    \put (77,20) {$\jbar$}
    \put (39,20) {$k$}
    \put (57,20) {$\kbar$}
    \put (37,5) {$l$}
    \put (63,5) {$\overline l$}}
\end{equation}
By Lemma~\ref{lem:lambdaprop} this gives the identity 
$\lambda(\Omega_i) \,\lambda(\Omega_{\jbar}) \, \lambda_{l}^{ik} \, \lambda_{\overline l}^{\kbar \jbar }=1$. 
Together with \eqref{eq:lam-Omega-i-is-1} and \eqref{eq:lam-inv}, we conclude $\lambda_{l}^{ik} = \lambda_{l}^{jk}$.
\end{proof}

\begin{lemma}
There exist $f_i$ satisfying \eqref{eq:fixed-step1} for all $i \in I$ and $f_i =1$ for $i\in I_{ad}$, such that $\Tilde{\lambda}_{k'}^{ik}= 1$ if $i\in I_{ad}$ and $N_{ik}^{~k'} \neq 0$. 
\end{lemma}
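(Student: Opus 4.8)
The plan is to construct the $f_i$ for $i \notin I_{ad}$ by a ``parallel transport along the universal grading group'' argument, starting from $\lambda_l^{ik}$ itself. Fix once and for all, for each $g \in G$ with $g \neq e$, a distinguished element $x_g \in I_g$; for the neutral component we of course take $x_e = \unit$. Since $R_{ad}$ acts transitively on each $R_g$, for every $k \in I_g$ there is some $a(k) \in I_{ad}$ with $N_{a(k)\,x_g}^{\;\;k} \neq 0$, so that $\lambda_{k}^{a(k)\,x_g}$ is defined and, by Lemma~\ref{lem:diagonal}, nonzero. The idea is to declare
\begin{equation}
f_k ~=~ \lambda_{k}^{a(k)\,x_g} \qquad \text{for } k \in I_g,\ g \neq e, \qquad f_i = 1 \text{ for } i \in I_{ad}.
\end{equation}
The point of Lemma~\ref{lem:adj-g-lamprop} is precisely that this is independent of the choice of $a(k)$: if $a, a' \in I_{ad}$ both fuse $x_g$ to $k$, then $\lambda_k^{a x_g} = \lambda_k^{a' x_g}$, so $f_k$ is well defined.

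Next I would check that the $f_i$ satisfy \eqref{eq:fixed-step1}, i.e. $f_\unit = 1$ (clear, as $\unit \in I_{ad}$) and $f_i f_{\ibar} = 1$ for all $i$. For $i \in I_{ad}$ this holds since $\ibar \in I_{ad}$ and $f_i = f_{\ibar} = 1$. For $i \in I_g$ with $g \neq e$, note $\ibar \in I_{g^{-1}}$. Here one applies Lemma~\ref{lem:lambdaprop-perm} (or Lemma~\ref{lem:lambdaprop}) to a suitable $(\unit;\dots)$-fusion tree built from the vertices witnessing $\lambda_{i}^{a(i)\,x_g}$, $\lambda_{\ibar}^{a(\ibar)\,x_{g^{-1}}}$, together with trees $\Omega_{a(i)}$, $\Omega_{a(\ibar)}$ for the $I_{ad}$-labels (which contribute $1$ by \eqref{eq:lam-Omega-i-is-1}) and the fusion tree \eqref{eq:lam-inv-fig}--\eqref{eq:lam-inv} relating $\lambda_{\overline{x_g}}^{\,\ibar\,\cdots}$ to an inverse; one must also use \eqref{eq:lam-inv} in the form $\lambda_{\overline{x_g}}^{\,\overline{x_g}} $-type relations and, crucially, that $x_g$ and $\overline{x_g}$ sit in inverse components so $\overline{x_g}$ and $x_{g^{-1}}$ differ by an $I_{ad}$-label. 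Assembling the product of structure constants over this tree and invoking Lemma~\ref{lem:lambdaprop} then forces $f_i f_{\ibar} = 1$.

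Finally, for the main claim: take $i \in I_{ad}$ and $k \in I_g$ with $N_{ik}^{\;k'} \neq 0$, so $k' \in I_g$ as well. After applying the isomorphism $f$, the new constant is $\tilde\lambda_{k'}^{ik} = \frac{f_i f_k}{f_{k'}} \lambda_{k'}^{ik} = \frac{f_k}{f_{k'}}\lambda_{k'}^{ik}$ (using $f_i = 1$). So I must show $\lambda_{k'}^{ik} = f_{k'}/f_k = \lambda_{k'}^{a(k')\,x_g}\big/\lambda_{k}^{a(k)\,x_g}$. This is a cocycle-type identity among structure constants in the component $R_g$, and it should follow by applying Lemma~\ref{lem:lambdaprop} to a $(\unit;\dots)$-fusion tree assembled from: the vertex for $\lambda_{k'}^{ik}$, the vertices for $\lambda_{k}^{a(k)\,x_g}$ and $\lambda_{k'}^{a(k')\,x_g}$ (the latter inverted via \eqref{eq:lam-inv}), the adjoint trees $\Omega_{a(k)}, \Omega_{a(k')}, \Omega_i$ and their conjugates (all contributing $1$), and a ``cap'' tree pairing the two copies of $x_g$ using \eqref{eq:lam-inv}. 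The product of all structure constants along this tree equals $1$ by Lemma~\ref{lem:lambdaprop}, and solving for $\lambda_{k'}^{ik}$ gives exactly $f_{k'}/f_k$.

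\textbf{Main obstacle.} The genuine difficulty is bookkeeping: building the correct $(\unit; i_1,\ibar_1,\dots)$-fusion trees so that every auxiliary vertex either contributes $1$ (because it lies in an $\Omega_j$ with $j \in I_{ad}$, cf.\ \eqref{eq:lam-Omega-i-is-1}) or is cancelled against its conjugate via \eqref{eq:lam-inv}, leaving precisely the three constants $\lambda_{k'}^{ik}$, $\lambda_{k}^{a(k)\,x_g}$, $\lambda_{k'}^{a(k')\,x_g}$. One has to be careful that such a tree exists --- this is where transitivity of the $R_{ad}$-action on $R_g$ and the freedom in Lemma~\ref{lem:lambdaprop-perm} to permute outgoing legs are used --- and to track the genus of the resulting surface, which is where the bound $g = 3N+2$ of Remark~\ref{rem:maximal-g} enters (the $\Omega_j$'s cost up to $N$ vertices each, and we glue three of them together with a bounded number of extra vertices). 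Checking well-definedness of $f_k$ (Lemma~\ref{lem:adj-g-lamprop}) is the conceptual crux, but it has already been isolated as a separate lemma, so here it reduces to assembling trees.
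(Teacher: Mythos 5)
Your overall strategy coincides with the paper's: fix a base point $x_g$ in each graded component $I_g$ (the paper calls it $k_g$), use transitivity of the $R_{ad}$-action and Lemma~\ref{lem:adj-g-lamprop} to transport a well-defined value $f_k$ to every $k\in I_g$, and then derive $\lambda_{k'}^{ik}=f_{k'}/f_k$ from a fusion-tree identity of the shape $\lambda_{k}^{i x_g}\,\lambda_{k'}^{jk}\,\lambda_{x_g}^{i'k'}=1$. Your third paragraph is essentially the paper's argument and is fine. The genuine gap is in your definition of $f_k$: setting $f_k=\lambda_k^{a(k)x_g}$ forces $f_{x_g}=1$ for every $g$ (take $a(x_g)=\unit$ and use Lemma~\ref{lem:adj-g-lamprop}), and this is incompatible with the constraint $f_if_{\ibar}=1$ of \eqref{eq:fixed-step1}. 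Concretely, for $k\in I_g$ your definition, together with index raising/lowering, \eqref{eq:lam-inv} and the above identity specialised to $k'=\overline{x_{g^{-1}}}$, yields
\begin{equation*}
 f_k\,f_{\kbar} \;=\; \lambda_k^{a(k)\,x_g}\,\lambda_{\kbar}^{a(\kbar)\,x_{g^{-1}}} \;=\; \big(\lambda_{x_g}^{\,i'\,\overline{x_{g^{-1}}}}\big)^{-1}
\end{equation*}
for a suitable $i'\in I_{ad}$. The right-hand side is a structure constant with two labels outside $I_{ad}$ and none equal to $\unit$, so nothing achieved in Steps 1--2 forces it to equal $1$; it measures exactly the discrepancy between $\overline{x_g}$ and your chosen base point $x_{g^{-1}}$ of the inverse component. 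You note that these two ``differ by an $I_{ad}$-label'', but that is the obstruction rather than its resolution: the structure constant attached to that $I_{ad}$-label is precisely what fails to cancel.

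The paper repairs this by \emph{not} taking $f_{x_g}=1$: it chooses $f_{x_g}=f_{x_{g^{-1}}}=\big(\lambda_{x_{g^{-1}}}^{\,i_g\,\overline{x_g}}\big)^{1/2}$ (consistent by index raising/lowering) and then sets $f_k=\lambda_k^{a(k)\,x_g}\,f_{x_g}$. The extra factor cancels from the ratio $f_{k'}/f_k$, so the main claim in your third paragraph is unaffected, but it exactly absorbs the obstruction above and gives $f_kf_{\kbar}=1$. (Your definition would work verbatim if one could always arrange $x_{g^{-1}}=\overline{x_g}$, but for $g$ of order two this requires a self-dual simple object in $I_g$, which need not exist; the square-root trick handles all cases uniformly.)
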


\begin{proof}
For each $g\in G$, fix an element $k_g \in I_g$,
such that $k_e = \unit$. By transitivity there exists some $i_g\in I_{ad}$ such that $N_{i_g \kbar_g}^{k_{g^{-1}}}\neq 0$. Then, find and fix $f_{k_g}$ for every $g$ such that
\begin{equation}\label{eq:fk_g}
    f_{k_g}f_{k_{g^{-1}}}  = \lambda_{k_{g^{-1}}}^{i_g\kbar_g}~
\end{equation}
and such that $f_\unit = 1$.
For instance $f_{k_g}= f_{k_{g^{-1}}} = \big(\lambda_{k_{g^{-1}}}^{i_g\kbar_g}\big)^{\frac12}$ is a consistent choice, since $\lambda_{k_{g}}^{i_g\kbar_{g^{-1}}} = \lambda_{k_{g^{-1}}}^{i_g \kbar_g}$ by raising and lowering indices (Lemma~\ref{lem:someproperties}). Note that this choice satisfies $f_\unit = 1$. 
Let $k\in I_g$ and $i\in I_{ad}$ be such that $N_{ik_g}^{k}\neq 0$ and define
\begin{equation}\label{eq:fk-g}
    f_k = \lambda_{k}^{ik_g}f_{k_g}~.
\end{equation}
This
is independent of the choice of $i$ by Lemma~\ref{lem:adj-g-lamprop} and is consistent with $k= k_g$ as $\lambda_{k_g}^{ik_g} = 1$ (choose $i=\unit$).
    
Let $k,k' \in I_g$ and $i,i',j\in I_{ad}$ such that $N_{i' k'}^{~k_g}, N_{jk}^{~k'}, N_{ik_g}^{~k} \neq 0$. 
The fusion tree
\begin{equation}
\boxpic{0.7}{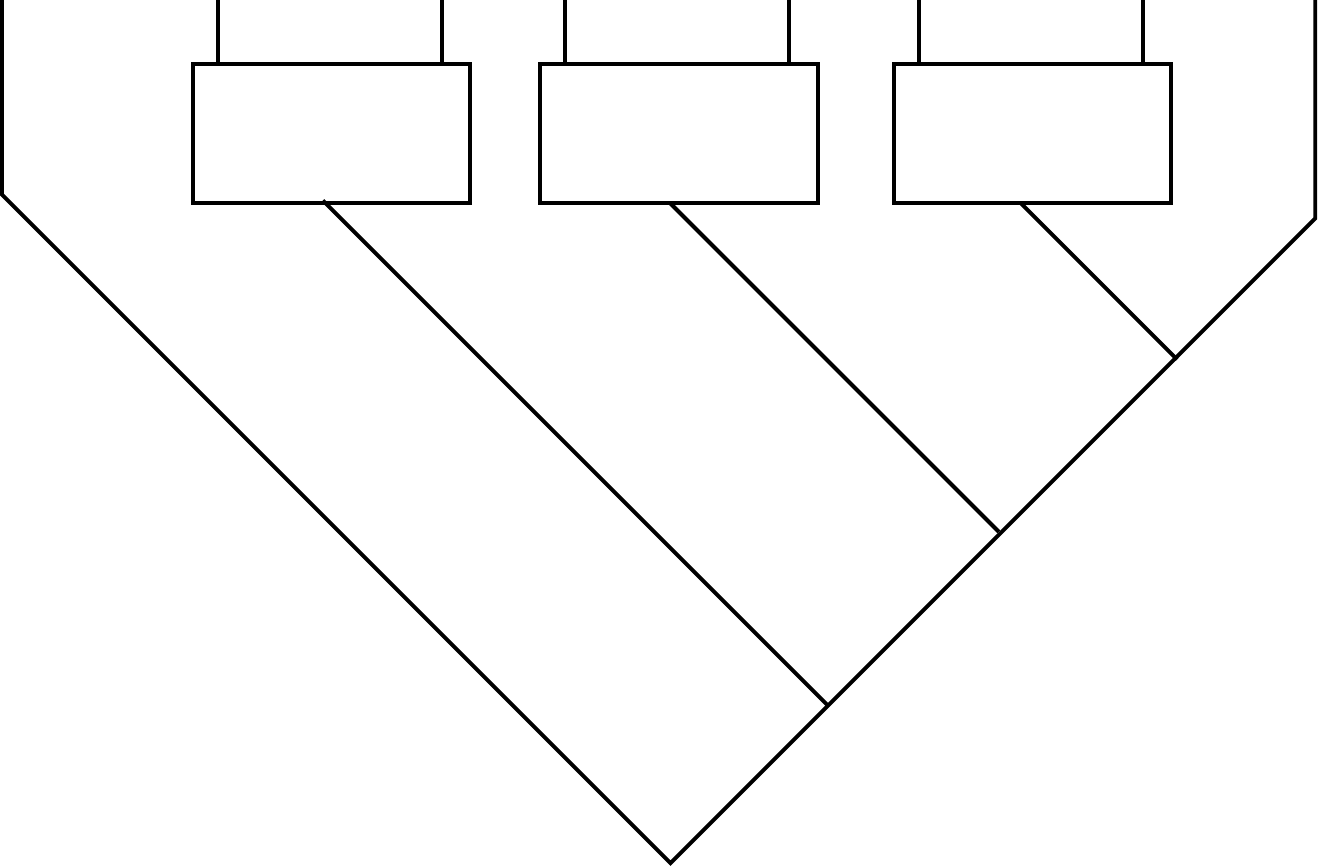}{
    \put (22,54) {$\Omega_{i'}$}
    \put (49,54) {$\Omega_{j}$}
    \put (75,54) {$\Omega_{i}$}
    \put (32,44) {$i'$}
    \put (59,44) {$j$}
    \put (85,44) {$i$}
    \put (22,63) {$\dots$}
    \put (48,63) {$\dots$}
    \put (75,63) {$\dots$}
    \put (-5,62) {$\kbar_g$}
    \put (56,2) {$k_g$}
    \put (70,15) {$k'$}
    \put (84,29) {$k$}
    \put (101,62) {$k_g$}}
\end{equation}
implies 
\begin{equation}\label{eq:f_g-fig}
    \lambda_{k}^{i k_g} \, \lambda_{k'}^{j k} \, \lambda_{k_g}^{i' k'} = 1~.
\end{equation}
From~\eqref{eq:lam-inv} and Lemma~\ref{lem:someproperties} we get $\lambda_{k_g}^{i' k'} = (\lambda_{k'}^{\ibar' k_g})^{-1}$. 
Inserting this in~\eqref{eq:f_g-fig} and using
\eqref{eq:fk-g} gives $\lambda_{k'}^{j k} = \frac{f_{k'}}{f_k}$. 
Furthermore, 
 \begin{equation}
     f_k f_{\kbar} 
     \overset{\eqref{eq:fk-g}}= \lambda_{k}^{i k_g}\lambda_{\kbar}^{jk_{g^{-1}}} f_{k_g} f_{k_{g^{-1}}} 
     \overset{\eqref{eq:fk_g}}=  \lambda_{k}^{i k_g}\lambda_{\kbar}^{jk_{g^{-1}}} \lambda_{k_{g^{-1}}}^{i_g \kbar_g} 
     ~=~ \lambda_{k}^{i k_g}\lambda_{\kbar_{g^{-1}}}^{jk} \lambda_{k_{g}}^{i_g \kbar_{g^{-1}}} 
     ~\overset{(*)}=~ 1~, 
 \end{equation}
 where $(*)$ follows from setting $k' = \kbar_{g^{-1}}$ in \eqref{eq:f_g-fig}.
\end{proof}

Therefore, in the new normalisation we have $\lambda_{k'}^{ik} = 1$ if $N_{ik}^{~k'}\neq 0$ and $i \in I_{ad}$.

\begin{lemma}\label{lem:lamgrading}
The structure constants depend only on the universal grading group, \ie for $i,i' \in I_g, j,j' \in I_h, k,k' \in I_{gh}$ with $N_{ij}^{~k},N_{i'j'}^{~k'}\neq 0$ we have
$\lambda^{ij}_k= \lambda^{i'j'}_{k'}$.
\end{lemma}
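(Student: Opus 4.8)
The plan is to show that $\lambda^{ij}_k$ depends only on the grading group elements $g,h$ with $i\in I_g$, $j\in I_h$ (so that $k\in I_{gh}$), by exhibiting, for given simple $i,i'\in I_g$, $j,j'\in I_h$, $k,k'\in I_{gh}$ with $N_{ij}^k\neq 0$ and $N_{i'j'}^{k'}\neq 0$, a single $(\unit;\dots)$-fusion tree that forces $\lambda^{ij}_k=\lambda^{i'j'}_{k'}$. All ingredients are already at hand: $\lambda^{xy}_z=1$ whenever $x$ or $y$ lies in $I_{ad}$ (the normalisation achieved just before this lemma, together with commutativity $\lambda^{xy}_z=\lambda^{yx}_z$ of Lemma~\ref{lem:someproperties}); the inversion $\lambda^{\ibar\jbar}_{\kbar}=(\lambda^{ij}_k)^{-1}$ of \eqref{eq:lam-inv}; the fixed trees $\Omega_a$ with paired leaves and $\lambda(\Omega_a)=1$ for $a\in I_{ad}$, see \eqref{eq:lam-Omega-i-is-1}; and Lemma~\ref{lem:lambdaprop-perm}.

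First I would use transitivity of $R_{ad}$ on $I_g$, $I_h$ and $I_{gh}$ to pick $a,b,c\in I_{ad}$ with $N_{ia}^{i'}\neq 0$, $N_{jb}^{j'}\neq 0$ and $N_{kc}^{k'}\neq 0$ (omitting the ones forced to be $\unit$, i.e.\ the splittings where $i=i'$, $j=j'$ or $k=k'$). Routine duality/braiding manipulations in the fusion ring then also yield $N_{\ibar\jbar}^{\kbar}\neq 0$ and $N_{\kbar\cbar}^{\kbar'}\neq 0$. I would then assemble the fusion tree $\Omega$ rooted at the edge $\unit$ as follows. The root vertex is $\unit\to k',\kbar'$, with constant $\lambda^{k'\kbar'}_{\unit}=1$. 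Below the $k'$-edge: split $k'\to i',j'$ (constant $\lambda^{i'j'}_{k'}$), then $i'\to i,a$ and $j'\to j,b$ (constants $1$, since $a,b\in I_{ad}$), and continue the tree below the $a$- and $b$-edges by the fixed trees $\Omega_a$ and $\Omega_b$ (each contributing a factor $\lambda(\Omega_a)=\lambda(\Omega_b)=1$). Below the $\kbar'$-edge: split $\kbar'\to\kbar,\cbar$ (constant $1$, since $\cbar\in I_{ad}$), continue below the $\cbar$-edge by $\Omega_{\cbar}$ (factor $1$), and split $\kbar\to\ibar,\jbar$ (constant $\lambda^{\ibar\jbar}_{\kbar}=(\lambda^{ij}_k)^{-1}$ by \eqref{eq:lam-inv}).

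The leaves of $\Omega$ are $i,\ibar,j,\jbar$ together with the leaves of $\Omega_a$, $\Omega_b$ and $\Omega_{\cbar}$. Since $\{i,\ibar\}$ and $\{j,\jbar\}$ are dual pairs and the leaves of each $\Omega_x$ ($x\in\{a,b,\cbar\}$) already come in dual pairs by construction, the leaf multiset is of the form $(i_1,\ibar_1,\dots,i_g,\ibar_g).\sigma$ required in Lemma~\ref{lem:lambdaprop-perm}. That lemma therefore gives $\lambda(\Omega)=1$; reading off the product of the vertex constants, every factor equals $1$ except $\lambda^{i'j'}_{k'}$ and $(\lambda^{ij}_k)^{-1}$, whence $\lambda^{ij}_k=\lambda^{i'j'}_{k'}$. (When $i=i'$ one simply omits the vertex $i'\to i,a$ and the graft $\Omega_a$, and similarly for $j$ and $k$; the degenerate cases where $g$, $h$ or $gh$ is trivial are then immediate.)

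The point requiring care — and the reason one cannot get away with a small tree — is the treatment of the auxiliary splittings. A symmetric genus-$2$ attempt to relate $\lambda^{ij}_k$ to $\lambda^{i'j'}_{k'}$ collapses to the tautology $\lambda^{ij}_k\cdot(\lambda^{ij}_k)^{-1}=1$ because the two halves of the tree become mirror images. Forcing one half to run through $\kbar$, so that it contributes $\lambda^{\ibar\jbar}_{\kbar}$ rather than $\lambda^{\ibar'\jbar'}_{\kbar'}$, breaks this symmetry but inevitably produces the spurious $I_{ad}$-labelled leaves $a$, $b$, $\cbar$, which on their own violate the pairing hypothesis of Lemma~\ref{lem:lambdaprop-perm}; absorbing them is exactly what the grafts $\Omega_a$, $\Omega_b$, $\Omega_{\cbar}$ accomplish, each contributing a trivial factor but raising the genus of $\Omega$ to a linear function of $N$. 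This is the step of the argument where irreducibility of $V_g^{\catname C}$ up to genus $3N+2$ is invoked, cf.\ Remark~\ref{rem:maximal-g}.
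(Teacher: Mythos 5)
Your proof is correct and follows essentially the same route as the paper: both use transitivity of $R_{ad}$ to connect the primed and unprimed triples via adjoint-labelled splittings, graft the fixed trees $\Omega_a$ (with $\lambda(\Omega_a)=1$) to restore the dual-pairing of leaves, and then apply Lemma~\ref{lem:lambdaprop-perm} together with \eqref{eq:lam-inv} to a single large fusion tree to get $\lambda^{i'j'}_{k'}\,(\lambda^{ij}_k)^{-1}=1$. The only cosmetic difference is that the paper roots the tree at the pair $k,\kbar$ and splits $k\to k',k_0$, while you root at $k',\kbar'$ and split $\kbar'\to\kbar,\cbar$; these are equivalent.
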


\begin{proof}
By transitivity of $R$, there exist $i_0, j_0,k_0 \in I_{ad}$ such that $N_{i_0 i}^{~i'},N_{j j_0}^{~j'}$ and $N_{k' k_0}^{~k}$ are non-zero. Consider the fusion tree
\begin{equation}
\boxpic{0.7}{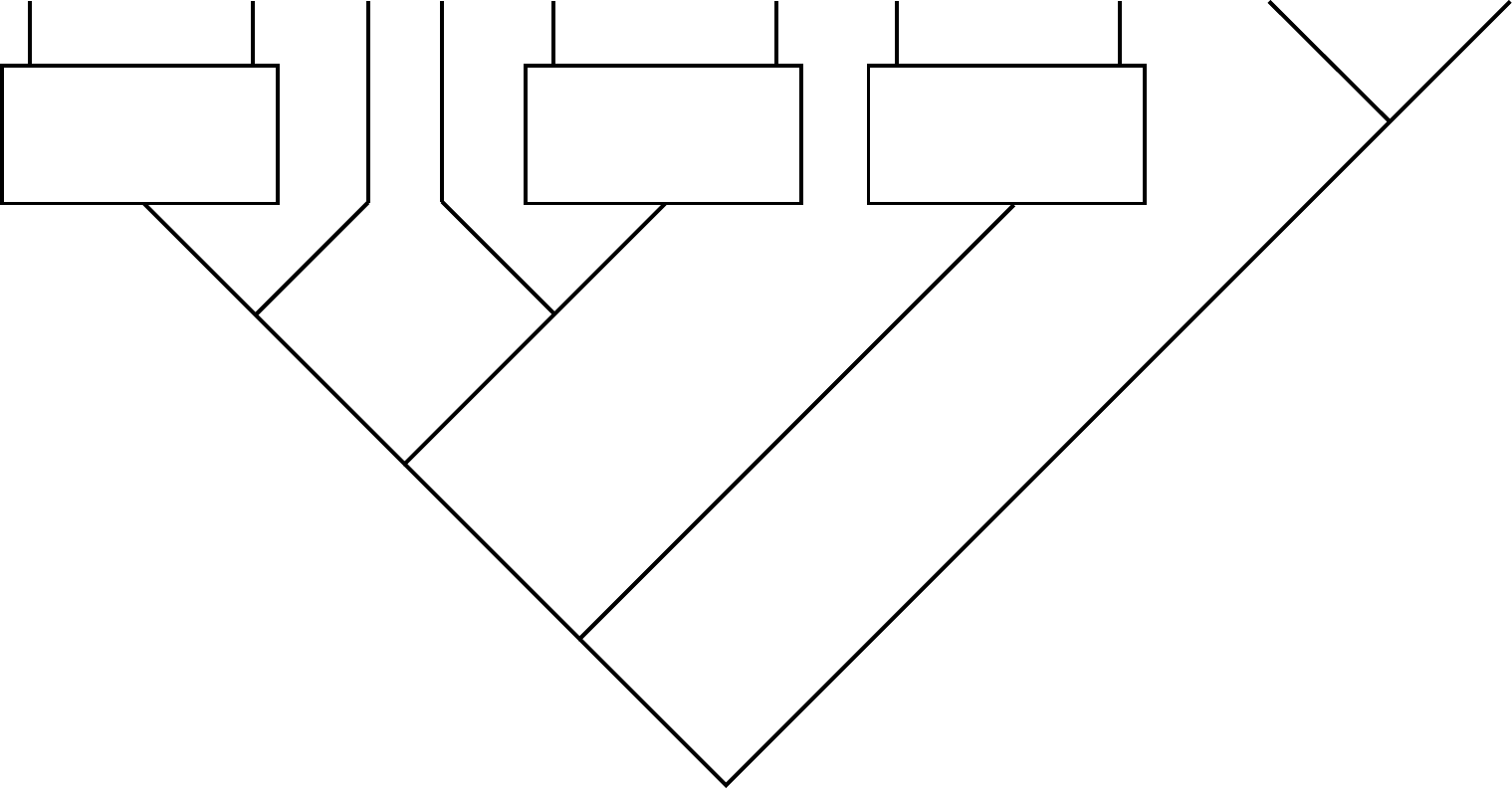}{
    \put (7,42) {$\Omega_{i_0}$}
    \put (42,42) {$\Omega_{j_0}$}
    \put (65,42) {$\Omega_{k_0}$}
    \put (6,50) {$\dots$}
    \put (41,50) {$\dots$}
    \put (64,50) {$\dots$}
    \put (22,50) {$i$}
    \put (30,50) {$j$}
    \put (41,2) {$k$}
    \put (31,23) {$j'$}
    \put (19,23) {$i'$}
    \put (29,13) {$k'$}
    \put (54,2) {$\kbar$}
    \put (82,50) {$\ibar$}
    \put (100,50) {$\jbar$}
    }
\end{equation}
which implies $\lambda^{i'j'}_{k'} \lambda^{\ibar \jbar}_{\kbar} = 1$
and so by \eqref{eq:lam-inv-fig} also
$\lambda^{i'j'}_{k'} = \lambda^{ij}_{k}$. 
\end{proof}

\begin{remark}\label{rem:maximal-g}
The proof of Lemma~\ref{lem:lamgrading} above is the place where the maximal genus $g$ occurs for which we use irreducibility of $V_g^{\catname C}$, namely $g= 3N+2$.
\end{remark}

The conditions on $\lambda^{ij}_k$ achieved up to this point are preserved by renormalisation constants $f_i$ which satisfy \eqref{eq:fixed-step1} as well as
\begin{equation}\label{eq:step3-f-conditions}
    f_i = 1 ~~\text{for all $i \in I_{ad}$} ~~,\quad
    f_i = f_j ~~\text{whenever $i,j \in I_g$ for some $g$} ~. 
\end{equation}

\subsubsection*{Final Step}

To conclude the proof, we will use group cohomology for the universal grading group $G$.
Namely, we define a 2-cochain $\omega : G \times G \to \mathbbm{k}^\times$ as follows. Given $g,h \in G$, pick $b_i \in R_g$, $b_j \in R_h$ as well as a $b_k \in R_{gh}$ that appears in the product $b_i b_j$. Then $N_{ij}^{~k} \neq 0$ and we set
\begin{equation}\label{eq:omega-lam-def}
    \omega(g,h) := \lambda^{ij}_k ~.
\end{equation}
By Lemma~\ref{lem:lamgrading}, this is independent of the choice of $i,j,k$.

\begin{lemma}\label{lem:2cocycle}
The 2-cochain $\omega$ is a symmetric normalised 2-cocycle. 
\end{lemma}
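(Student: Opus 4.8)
The plan is to verify the three required properties—normalisation, symmetry, and the 2-cocycle condition—directly from the definition \eqref{eq:omega-lam-def} together with the structural identities for the constants $\lambda^{ij}_k$ collected in Lemmata~\ref{lem:someproperties}, \ref{lem:lambdaprop-perm}, and \ref{lem:lamgrading}. Normalisation, $\omega(e,g) = \omega(g,e) = 1$, follows immediately: if $g = e$ we may choose $b_i$ with $i = \unit$, and then by unitality (Lemma~\ref{lem:someproperties}, part~1, after the normalisations of the previous steps) $\lambda^{\unit j}_j = \varepsilon_0^{-1} = 1$; similarly for $\omega(g,e)$. Symmetry, $\omega(g,h) = \omega(h,g)$, follows from commutativity $\lambda^{ij}_k = \lambda^{ji}_k$ (Lemma~\ref{lem:someproperties}, part~2, and the fact that these properties are preserved under the renormalisations used), applied to the same choice of representatives $b_i \in R_g$, $b_j \in R_h$, $b_k \in R_{gh}$.

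The main work is the cocycle identity $\omega(g,h)\,\omega(gh,l) = \omega(g,hl)\,\omega(h,l)$ for $g,h,l \in G$. The idea is to pick representatives $b_i \in R_g$, $b_j \in R_h$, $b_m \in R_l$, and simple objects $k \in I_{gh}$, $p \in I_{hl}$, and finally $q \in I_{ghl}$, arranged so that all the relevant fusion coefficients are nonzero: $N_{ij}^{~k} \neq 0$, $N_{km}^{~q} \neq 0$, $N_{jm}^{~p} \neq 0$, $N_{ip}^{~q} \neq 0$. (Such a coherent choice exists by transitivity of $R$ and the $G$-grading; one first fixes $q \in I_{ghl}$, then uses transitivity to find $p$ with $N_{ip}^{~q}\neq 0$, etc.) Then $\omega(g,h)\,\omega(gh,l)$ is computed from the fusion tree that first fuses $i$ with $j$ to $k$ and then $k$ with $m$ to $q$, giving $\lambda^{ij}_k \lambda^{km}_q$; while $\omega(g,hl)\,\omega(h,l)$ comes from the tree fusing $j$ with $m$ to $p$ and then $i$ with $p$ to $q$, giving $\lambda^{ip}_q \lambda^{jm}_p$. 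Both products are values of $\lambda(\Omega)$ for two $(\unit; \text{(labels)})$-fusion trees—or more precisely, after capping off the root $q$ against a $\Omega_{\qbar}$-tree (which contributes $\lambda(\Omega_{\qbar})$, equal to $1$ by \eqref{eq:lam-inv-fig}/\eqref{eq:lam-inv} and the normalisations of Step~3, using that $q$ may even be taken in a convenient component so that $\Omega_{\qbar}$ is available)—they become $\lambda$-values of two $I$-fusion trees with the \emph{same} incoming label $\unit$ and the same multiset of outgoing labels, differing only by the shape of the tree and a reordering of leaves. By Lemma~\ref{lem:lambdaprop-perm} (which already allows arbitrary reordering of the outgoing labels) both equal $1$, hence they are equal to each other; unwinding the cap-off factors gives $\lambda^{ij}_k \lambda^{km}_q = \lambda^{ip}_q \lambda^{jm}_p$, which is the cocycle identity.

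The subtlety—and the step I expect to be the main obstacle—is bookkeeping: making sure that the two fusion trees one writes down really are $(\unit; \dots)$-fusion trees with matching (up to permutation) leaf labels after capping, and that every structure constant appearing is one to which Lemma~\ref{lem:lambdaprop-perm} or \eqref{eq:lam-inv} genuinely applies. Concretely, the outgoing labels will be the ``leaves'' $m_1, \overline{m}_1, \dots$ coming from the fixed trees $\Omega_i, \Omega_j, \Omega_m$ (and $\Omega_{\qbar}$) chosen in Step~2, which only exist for $i,j,m \in I_{ad}$; for general $g,h,l$ one must instead route through the representatives $k_g \in I_g$ etc. of the previous lemma and use $f_i = f_j$ for $i,j$ in the same grading component \eqref{eq:step3-f-conditions}, so that the value $\lambda^{ij}_k$ is genuinely a function of $(g,h)$ alone (this is exactly Lemma~\ref{lem:lamgrading}) and the tree manipulations only involve labels in $I_{ad}$ and in the fixed components. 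Once the fusion tree in the figure is drawn with the correct leaf labels, the identity $\lambda(\Omega) = 1$ from Lemma~\ref{lem:lambdaprop-perm} does all the work, and the remaining algebra—solving for the ratio of the two pairs of $\lambda$'s—is routine.
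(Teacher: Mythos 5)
Your normalisation and symmetry arguments are fine and agree with the paper's. The cocycle identity is where your proposal both diverges from the paper and has a genuine gap. Lemma~\ref{lem:lambdaprop-perm} applies only to fusion trees whose outgoing labels are a permutation of conjugate pairs $(i_1,\ibar_1,\dots,i_g,\ibar_g)$. The trees you construct have leaves $i\in I_g$, $j\in I_h$, $m\in I_l$ together with the leaves of a capping tree $\Omega_{\overline q}$; this multiset is not of conjugate-pair form for general $g,h,l$, so the lemma does not apply to them. Moreover, the trees $\Omega_{\overline q}$ of Step~2 exist only for $\overline q\in I_{ad}$, i.e.\ only when $ghl=e$: the grading component of the root label $q\in I_{ghl}$ is forced by $g,h,l$ and cannot be ``taken in a convenient component''. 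Your closing paragraph names this obstacle (``one must instead route through the representatives $k_g$'') but does not actually remove it, and as written the central step of the cocycle argument fails.

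The strategy is repairable: instead of capping with $\Omega_{\overline q}$, cap the vertex $\unit\to(q,\overline q)$ with the \emph{conjugate} subtree $\overline q\to(\overline m,\kbar)\to(\overline m,(\jbar,\ibar))$, which is admissible because $N_{km}^{~q}=N_{\overline m\,\kbar}^{~\overline q}$ and $N_{ij}^{~k}=N_{\jbar\ibar}^{~\kbar}$. Then the two associations $q\to(k,m)\to((i,j),m)$ and $q\to(i,p)\to(i,(j,m))$, each completed by this same $\overline q$-subtree, give two fusion trees whose leaves are a permutation of $(i,\ibar,j,\jbar,m,\overline m)$; Lemma~\ref{lem:lambdaprop-perm} gives $\lambda(\Omega)=1$ for both, and since the $\overline q$-subtree contributes the same factor to each, dividing yields $\lambda^{ij}_k\lambda^{km}_q=\lambda^{jm}_p\lambda^{ip}_q$, i.e.\ the cocycle identity (a compatible $p$ exists by associativity of the Grothendieck ring, and independence of all choices is Lemma~\ref{lem:lamgrading}). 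Even so repaired, this is a genuinely different route from the paper's: there the cocycle condition is obtained by evaluating $(\Delta_Z\otimes\id{})\circ\Delta_Z\circ e_l$ and $(\id{}\otimes\Delta_Z)\circ\Delta_Z\circ e_l$ against $r_i\otimes r_j\otimes r_k$, observing that every intermediate simple object contributing to the sum lies in a single grading component so that the $\lambda$'s factor out of the sum as $\omega$'s, and then invoking coassociativity of $Z$ and $Z(\unit)$.
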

\begin{proof}
That $\omega$ is normalised, \ie that $\omega(e,g) = 1 = \omega(g,e)$, is just the normalisation condition $\lambda^{\unit i}_i = 1 = \lambda^{\unit i}_i$ achieved in step 1.
Symmetry of $\omega$, that is $\omega(g,h) = \omega(h,g)$ follows from the commutativity property of $\lambda^{ij}_k$ in Lemma~\ref{lem:someproperties}.

To show the  cocycle condition we will use coassociativity of the algebra $Z$. Given $f,g,h \in G$, pick $b_i \in R_f$, $b_j \in R_g$, and $b_k \in R_h$. 
Then choose $l\in I$ such that $b_l$ is a summand in the product $b_i b_j b_k$. This implies that $b_l \in R_{fgh}$.

In terms of structure constants, one side of the coassociativity condition for $Z$ can be rewritten as
\begin{align}
&
(r_i\otimes r_j\otimes r_k)
\circ 
(\Delta_Z\otimes \id{})
\circ
\Delta_Z
\circ
e_l 
\nonumber
\\
&\overset{(1)}=~
\sum_{p \in I}
(r_i\otimes r_j\otimes r_k)
\circ 
(\Delta_Z\otimes \id{})
\circ
((e_p \circ r_p) \otimes \id{}) 
\circ 
\Delta_Z
\circ
e_l 
\nonumber
\\
&\overset{(2)}=~
\sum_{p \in I} \lambda^{ij}_p \, \lambda^{pk}_l \,
(r_i\otimes r_j\otimes r_k)
\circ 
(\Delta_{Z(\unit)}\otimes \id{})
\circ
((e_p \circ r_p) \otimes \id{}) 
\circ 
\Delta_{Z(\unit)}
\circ
e_l 
\nonumber
\\
&\overset{(3)}=~\omega(f,g)\,\omega(fg,h)
\sum_{p \in I} 
(r_i\otimes r_j\otimes r_k)
\circ 
(\Delta_{Z(\unit)}\otimes \id{})
\circ
((e_p \circ r_p) \otimes \id{}) 
\circ 
\Delta_{Z(\unit)}
\circ
e_l 
\nonumber
\\
&\overset{(4)}=~\omega(f,g)\,\omega(fg,h)~
(r_i\otimes r_j\otimes r_k)
\circ 
(\Delta_{Z(\unit)}\otimes \id{})
\circ
\Delta_{Z(\unit)}
\circ
e_l 
\end{align}
In step 1 we expanded $\id{Z}$ into a direct sum over its component simple summands. This allows us in step 2 to insert the factors of $\lambda$ which give the difference between $\Delta_Z$ and $\Delta_{Z(\unit)}$ in each simple summand. (In this expression we take $\lambda$'s to be zero if their indices are not allowed by fusion.) The key step is equality 3. Here one uses that by the properties of the universal grading group, all $p \in I$ which give a nonzero contribution must have $b_p \in R_{fg}$, for else $N_{ij}^{~p}=0$. Thus, if we replace $\lambda$ by $\omega$ via \eqref{eq:omega-lam-def}, the prefactor becomes independent of $p$ and can be taking out of the sum. The sum over $p$ can then be carried out giving the result of step 4.

An analogous computation for the other side of the coassociativity condition for $Z$ gives
\begin{align}
&(r_i\otimes r_j\otimes r_k)
\circ 
(\id{} \otimes \Delta_Z)
\circ
\Delta_Z
\circ
e_l 
\nonumber
\\
&=~
\omega(g,h)\,\omega(f,gh)~
(r_i\otimes r_j\otimes r_k)
\circ 
(\id{} \otimes \Delta_{Z(\unit)})
\circ
\Delta_{Z(\unit)}
\circ
e_l  ~.
\end{align}
Comparing the two expressions and using coassociativity of $Z$ and $Z(\unit)$ results in 
\begin{equation}
    \omega(f,g) \, \omega(fg,h)~=~\omega(g,h)\,\omega(f,gh)~,
\end{equation}
which is the cocycle condition.
\end{proof}

In group cohomology there is a short exact sequence 
\begin{equation}\label{eq:ses}
0 \rightarrow \mathrm{Ext}(G,\mathbbm{k}^\times) \rightarrow H^2(G,\mathbbm{k}^\times) \rightarrow \Hom(\Lambda^2 G, \mathbbm{k}^\times) \rightarrow 0~,
\end{equation}
see \cite[Exercise V.6.5]{brown}. 
However, $\mathrm{Ext}(G,\mathbbm{k}^\times) = 0$ (as $\mathbbm{k}$ is algebraically closed, $\mathbbm{k}^\times$ is a divisible group, and so injective as an abelian group).
The second map in \eqref{eq:ses} is given by
\begin{equation}
    \psi ~\longmapsto~ \Big(\, g \wedge h \,\mapsto\, \frac{\psi(g,h)}{\psi(h,g)} \,\Big)~,
\end{equation}
and so any symmetric 2-cocycle is a coboundary. 

In particular, by Lemma~\ref{lem:2cocycle} $\omega$ is a coboundary, that is, there exist $\gamma_g \in \mathbbm{k}^\times$ such that
\begin{equation}
    \omega(g,h) ~=~ \frac{\gamma_g\,\gamma_h}{\gamma_{gh}}~.
\end{equation}
As $\omega$ is normalised, we have $\gamma_e=1$. Now choose $f_i = \gamma_g^{-1}$ 
whenever $i \in I_g$. 
This choice satisfies the conditions in \eqref{eq:step3-f-conditions}.
To see that also \eqref{eq:fixed-step1} holds, note that $f_i f_{\ibar} = (\gamma_g \gamma_{g^{-1}})^{-1} = \omega(g,g^{-1})^{-1}$. But by \eqref{eq:omega-lam-def} we have $\omega(g,g^{-1}) = \lambda^{i \ibar}_\unit = 1$, by step 1.
This finally gives
\begin{equation}
    \lambda^{ij}_k ~=~ \frac{f_k}{f_i\,f_j}~.
\end{equation}
We have now completed the proof that $Z(A) \cong Z(\unit)$ as algebras and thereby the proof of Theorem~\ref{thm:maintheorem}.

\appendix

\section{Dimension of a simple non-degenerate algebra}
\label{app:simple-lemma}

A pivotal category is spherical if its left and right traces are equal. A ribbon category is automatically spherical.
A multifusion category is the same as a fusion category, except that the tensor unit is not required to be simple. We refer to \cite[Ch.\,4]{EGNO} for more details.

The following more general statement implies part 3 of Lemma~\ref{lem:nondeg-ssFa}.

\begin{lemma}\label{lem:simple-sFa-dim}
Let $\catname F$ be a spherical multifusion category over an algebraically closed field $\mathbbm{k}$ (of any characteristic) and let $A \in \catname F$ be a simple $\Delta$-separable symmetric Frobenius algebra. Then $\dim_{\catname F}(A) \neq 0$.
\end{lemma}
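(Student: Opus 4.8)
The plan is to reduce the statement to the non‑vanishing of the scalar $\varepsilon\circ\eta$ and then to extract it from the algebra structure of $A$. Recall the standard identity for a symmetric Frobenius algebra: if $\mu\circ\Delta=\zeta_A\,\id{A}$ and $\varepsilon\circ\eta=\zeta_\unit\,\id{\unit}$, then $\zeta_A\,\zeta_\unit=\dim_{\catname F}(A)$ (this is where sphericity of $\catname F$ enters, together with the ``symmetric'' property, which makes the copairing $\Delta\circ\eta$ and the pairing $\varepsilon\circ\mu$ a pivotal self‑duality of $A$). Since $A$ is $\Delta$-separable we have $\zeta_A=1$, hence $\dim_{\catname F}(A)=\varepsilon\circ\eta$, and it suffices to prove $\varepsilon\circ\eta\neq0$.

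Now $\Delta$-separability implies that $A$ is separable in $\catname F$; forgetting the structure of $\catname F$, $A$ is then a finite‑dimensional separable, hence semisimple, $\mathbbm{k}$-algebra, and since $\mathbbm{k}$ is algebraically closed $A\cong\bigoplus_{i=1}^{r}\mathrm{Mat}_{m_i}(\mathbbm{k})$ as $\mathbbm{k}$-algebras. Viewing $\varepsilon$ as a $\mathbbm{k}$-linear functional, $\kappa:=\varepsilon\circ\mu$ is a symmetric, non‑degenerate, associative bilinear form on this algebra, so $\kappa=\sum_i c_i\,\mathrm{tr}_{\mathrm{Mat}_{m_i}}$ with all $c_i\in\mathbbm{k}^\times$. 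Writing $\Delta$ for the dual‑basis coproduct of $\kappa$, a direct computation in each matrix block gives $\mu\circ\Delta\big|_{\mathrm{Mat}_{m_i}}=(m_i/c_i)\,\id{}$; the hypothesis $\mu\circ\Delta=\id{A}$ therefore forces $c_i=m_i\cdot\id{\unit}$ for every $i$ — in particular each integer $m_i$ is invertible in $\mathbbm{k}$ — and then
\[
  \varepsilon\circ\eta \;=\; \varepsilon(1_A)\;=\;\sum_{i=1}^r c_i\,m_i\;=\;\sum_{i=1}^r m_i^2 .
\]

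The remaining — and main — point is that $\sum_i m_i^2\neq0$ in $\mathbbm{k}$, and this is precisely where simplicity of $A$ as a bimodule is indispensable: knowing only that each $m_i\cdot\id{\unit}$ is invertible does not give $\sum_i m_i^2\neq0$ in positive characteristic. Here I would use that the symmetric Frobenius structure is an isomorphism $A\xrightarrow{\sim}A^\vee$ of $A$-$A$-bimodules onto the bimodule dual, so that, $A$ being a simple bimodule, Schur's lemma gives $\Hom_{{}_A\catname F_A}(A,A^\vee)\cong\mathbbm{k}$; hence the associative forms on $A$ that are morphisms in $\catname F$ fill only a single line — forced to pass through $(m_1,\dots,m_r)$ — inside the $r$-dimensional space of all associative forms, and $\Delta$-separability together with non‑degeneracy of $\kappa$ then pins the scalar into $\mathbbm{k}^\times$, which is what excludes $\sum_i m_i^2=0$. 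An equivalent packaging is via module categories: the $\Delta$-separable symmetric Frobenius structure makes $\catname F_A$ a module category with a module trace, indecomposable because $A$ is a simple bimodule; non‑degeneracy of the module trace forces $\dim_{\catname F_A}(S)\in\mathbbm{k}^\times$ for every simple $S$, and passing to the haploid algebra $\underline{\End}_{\catname F}(S)$ of a generating simple $S$ (Morita‑equivalent to $A$, and for which $\varepsilon\circ\eta\neq0$ is elementary since $\Hom_{\catname F}(\underline{\End}(S),\unit)$ is one‑dimensional) should let one propagate the non‑vanishing back to $A$. I expect the genuinely delicate step to be exactly this transfer, i.e.\ controlling how the matrix blocks of $A$ are glued by the bimodule structure so that the weighted sum $\sum_i m_i^2$ cannot vanish.
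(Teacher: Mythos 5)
Your opening reduction is fine: for a $\Delta$-separable \emph{symmetric} Frobenius algebra one indeed has $\dim_{\catname F}(A)=\varepsilon\circ\eta$, and this is also the first step of the paper's argument. But the core of your proposal breaks down at the phrase ``forgetting the structure of $\catname F$, $A$ is then a finite-dimensional \dots\ $\mathbbm{k}$-algebra''. An object of a general (multi)fusion category has no underlying vector space: there is no fiber functor to $\mathrm{Vect}_{\mathbbm{k}}$ (think of the Fibonacci or Ising categories), so the Wedderburn decomposition $A\cong\bigoplus_i\mathrm{Mat}_{m_i}(\mathbbm{k})$, the blockwise identification $\kappa=\sum_i c_i\,\mathrm{tr}$, and the computation $\varepsilon\circ\eta=\sum_i m_i^2$ are not available. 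The only honest $\mathbbm{k}$-algebra in sight is $A_{\mathrm{top}}=\catname F(\unit,A)$, which sees only the $\unit$-isotypic part of $A$; the conditions $\mu\circ\Delta=\id{A}$ and ``symmetric'' are conditions in $\catname F$ (the latter involving the pivotal structure) and do not translate into your dual-basis computation on $A_{\mathrm{top}}$. The paper's own counterexample at the end of the appendix --- the Clifford algebra in $SVect$, where a fiber functor \emph{does} exist --- already illustrates that the categorical dimension and Frobenius form cannot be read off from the underlying $\mathbbm{k}$-algebra in the naive way. On top of this, you yourself flag that the decisive non-vanishing $\sum_i m_i^2\neq 0$ is not established; the appeals to Schur's lemma for the bimodule $A\cong A^\vee$ and to module traces are plausible signposts but are not carried out, so even within your framework the proof is incomplete.

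For comparison, the paper's proof stays entirely inside $\catname F$: it equips $A_{\mathrm{top}}$ with the pairing $\langle x,y\rangle=\varepsilon\circ\mu\circ(x\otimes y)$ (non-degenerate by non-degeneracy of $A$ and semisimplicity of $\catname F$), introduces an idempotent $p$ on $A_{\mathrm{top}}$ built from the Frobenius structure which is self-adjoint for this pairing by sphericality, identifies $\Im(p)$ with the image of ${}_A\catname F_A(A,A)\to A_{\mathrm{top}}$, $f\mapsto f\circ\eta$, and uses simplicity of $A$ (Schur) to conclude $\Im(p)=\mathbbm{k}\,\eta$; self-adjointness and non-degeneracy then force $\langle\eta,\eta\rangle=\varepsilon\circ\eta\neq 0$. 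This is where simplicity genuinely enters, and it is the mechanism your sketch gestures at but does not supply. If you want to salvage your approach, replace the fictitious Wedderburn decomposition of $A$ by this analysis of $A_{\mathrm{top}}$ and of the idempotent $p$.
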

\begin{proof}
To avoid cumbersome notation, in this proof we assume $\catname F$ to be strict.
Let $A_\mathrm{top} := \catname F(\unit, A)$ denote the \textit{topological algebra} of $A$, cf.\ \cite[Sec.\,3.4]{FRS1}. It is an algebra over $\mathbbm{k}$ via the product and unit
\begin{equation}
    \mu_\mathrm{top}(x,y) := \mu \circ (x \otimes y)\quad , \quad 
    1_\mathrm{top} := \eta ~.
\end{equation}
Define a pairing on $A_\mathrm{top}$ by 
\begin{equation}\label{eq:Atop-pairing}
    \langle x, y\rangle := \varepsilon \circ \mu_\mathrm{top}(x,y)~.
\end{equation}
Let $x\neq 0$ be an element in $A_\mathrm{top}$. The non-degeneracy of $A$ (cf.\ Remark~\ref{rem:non-deg-vs-Frob}) implies that $\Phi \circ x \neq 0$. Since $\catname F$ is semisimple, there is a $\varphi: A^\ast \rightarrow \unit $ such that $\varphi\circ \Phi\circ x \neq 0$. Using the expression in~\eqref{eq:symmetric} for $\Phi$, it follows for $y = \big[ \unit \xrightarrow{\operatorname{coev}_A} A \otimes A^* \xrightarrow{\id{A}\otimes \varphi} A \big]$ that
\begin{equation}
    \langle x,y\rangle = \varphi \circ \Phi \circ x \neq 0~.
\end{equation}
Therefore, the pairing on $A_\mathrm{top}$ is non-degenerate. 
Consider now the linear map $p: A_\mathrm{top}\rightarrow A_\mathrm{top}$ defined by 
\begin{equation}
    p(x) = \boxpic{0.7}{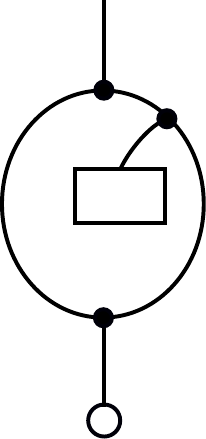}{\put (24,52.5) {$x$}} \quad .
\end{equation}
Since $A$ is $\Delta$-separable, it follows that $p^2 = p$ and $p(\eta) = \eta$. The pairing satisfies the following invariance property:
\begin{equation}
    \langle p(x),y \rangle \overset{(1)}{=} 
    \boxpic{0.7}{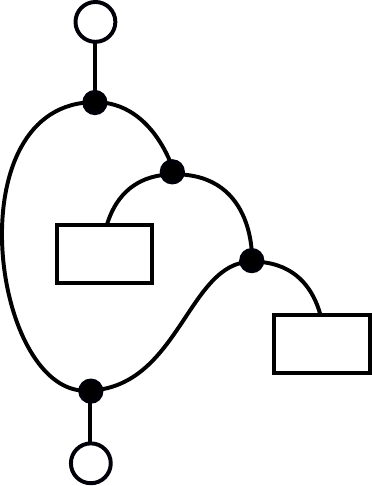}{
    \put (18,45) {$x$}
    \put (63,27) {$y$}
    }
    \overset{(2)}{=} 
    \boxpic{0.7}{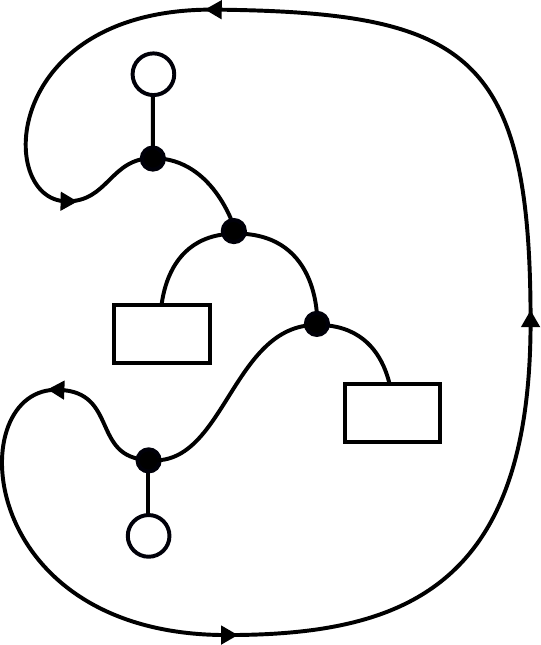}{
    \put (22,46) {$x$}
    \put (58,35) {$y$}
    }
    \overset{(3)}{=}
    \langle x, p(y) \rangle~,
\end{equation}
where (1) uses the associativity of $A$, (2) is immediate from sphericality of $\catname F$ and (3) can be verified using that $A$ is a symmetric Frobenius algebra. 

Write ${}_A\catname F_A(A,A)$ for the subspace of $A$-$A$-bimodule morphisms in $\catname F(A,A)$. 
Consider the linear map $\psi: {}_A\catname F_A (A,A)\rightarrow A_\mathrm{top}$ 
given by $\psi (f) := f \circ \eta$. 
It satisfies
\begin{equation}
    \psi (f) = f\circ \eta = f \circ p(\eta) = p(f\circ \eta) = p (\psi (f))~, 
\end{equation}
where we used that $f$ is a bimodule morphism to exchange $p$ with $f$. Hence, we have $\Im(\psi ) \subset \Im (p )$. 
Conversely, let $x \in A_\mathrm{top}$ and define $f_x \in \catname F(A,A)$ by 
\begin{equation}
    f_x = \boxpic{0.7}{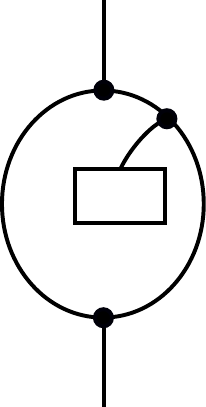}{\put (25,48.5) {$x$}}  \quad .
\end{equation}
One checks that $f_x \in {}_A \catname F_A (A,A)$ and $\psi(f_x) = p(x)$, so that $\Im (p)\subset \Im (\psi)$, \ie altogether $\Im (p)= \Im (\psi)$.

Since the algebra $A$ is simple, ${}_A \catname F_A (A,A) = \mathbbm{k}\id{}$
(this uses that $\mathbbm{k}$ is algebraically closed).
Therefore, $\dim \Im (p) \le 1$ and so in fact we have $\Im(p) = \mathbbm{k} \eta$.
By non-degeneracy of the pairing 
    in~\eqref{eq:Atop-pairing}
we can find some $y$ such that $\langle \eta,y \rangle \neq 0$. As $\eta$ is a basis for $\Im(p)$ there is $\lambda \in \mathbbm{k}$ with $p(y) = \lambda\,\eta$. Using this, we compute
\begin{equation}
    0 \neq \langle \eta , y \rangle = \langle p(\eta) , y \rangle = \langle \eta , p(y) \rangle = \lambda \langle \eta , \eta \rangle~.
\end{equation} 
Therefore $\langle \eta , \eta \rangle \neq 0$. Finally,
$\dim_{\catname F}(A) = \varepsilon \circ \eta = \langle \eta, \eta \rangle \neq 0$.  
\end{proof}
The condition that $A$ is symmetric cannot be dropped from Lemma~\ref{lem:simple-sFa-dim}. For example, the two-dimensional Clifford algebra with one odd generator in $SVect$ is simple $\Delta$-separable Frobenius (but not symmetric) and has dimension zero.


\newcommand{\arxiv}[2]{\href{http://arXiv.org/abs/#1}{#2}}
\newcommand{\doi}[2]{\href{http://doi.org/#1}{#2}}

\end{document}